\documentclass{amsart}
\usepackage[utf8]{inputenc}
\usepackage{amsmath}
\usepackage{amssymb}
\usepackage{amsthm}

\usepackage{booktabs}
\usepackage{mathrsfs}
\usepackage{enumerate}
\usepackage[labelfont=bf]{caption}
\usepackage{accents}
\usepackage{microtype}
\usepackage{mathtools}
\usepackage{longtable}
\usepackage{varwidth}

\usepackage{hyperref}
\usepackage{amsmath}
\usepackage{amsthm}
\usepackage{amssymb}
\usepackage{xfrac}
\usepackage{mathtools}
\usepackage{tikz-cd}
\usepackage{enumitem}
\usepackage{comment}
\usepackage{scalerel}
\usepackage{stackengine,wasysym}
\usepackage{csquotes}
\usepackage{todonotes}
\usepackage{cleveref}

\newcommand{\RR}{\mathbb{R}}
\newcommand{\ZZ}{\mathbb{Z}}

\newcommand{\NN}{\mathbb{N}}
\newcommand{\HH}{\mathbb{H}}
\newcommand{\EE}{\mathbb{E}}
\newcommand{\dd}{\mathrm{d}}

\newcommand{\MM}{\mathbb{M}}

\newcommand{\MP}{\mathbb{PM}}
\newcommand{\Points}{\mathbb{P}}
\newcommand{\Graphs}{\mathcal{G}}

\newcommand{\Complexes}{\mathcal{C}}
\newcommand{\GraphsU}{\mathcal{GU}}

\newcommand{\Hypers}{\mathcal{H}}
\newcommand{\HypersP}{\Points\Hypers}
\newcommand{\HypersV}{\mathcal{HV}}
\newcommand{\Riemanns}{\mathcal{R}}
\newcommand{\DelanGraph}{\mathcal{DG}}
\newcommand{\ThickGraph}{\mathcal{TG}}

\newcommand{\ThickHypersP}{\mathcal{THM}}
\newcommand{\DelanComplex}{\mathcal{DT}}
\newcommand{\ThickManifold}[2]{\mathcal{TM}(#1,#2)}

\newcommand{\vol}{\mathrm{vol}}
\newcommand{\Prob}[1]{\mathrm{Prob}\left(#1\right)}
\newcommand{\IRS}[1]{\mathrm{IRS}\left(#1\right)}

\newcommand{\DSubtf}[1]{\mathrm{DSub}_\mathrm{tf}\left(#1\right)}

\newcommand{\Ends}[1]{\mathcal{E}\left(#1\right)}
\newcommand{\EndsFin}[1]{\mathcal{E}_{<\infty}\left(#1\right)}
\newcommand{\EndsInf}[1]{\mathcal{E}_{\infty}\left(#1\right)}
\newcommand{\Isom}[1]
{\mathrm{Isom}\left(#1\right)}
\newcommand{\capacity}[2][]{\mathrm{cap}_{#1}\left(#2\right)}

\theoremstyle{plain}
\newtheorem{theorem}{Theorem}[section]
\newtheorem{lemma}[theorem]{Lemma}
\newtheorem*{lemma*}{Lemma}
\newtheorem{remark}[theorem]{Remark}
\newtheorem{prop}[theorem]{Proposition}
\newtheorem{cor}[theorem]{Corollary}
\newtheorem{example}[theorem]{Example}
\newtheorem*{theorem*}{Theorem}

\numberwithin{equation}{section}
\theoremstyle{definition}
\newtheorem{definition}[theorem]{Definition}

\title{Topology and dynamics of unimodular random hyperbolic manifolds}

\author{Ilya Gekhtman}
\address{Department of Mathematics, Technion -- Israel Institute of Technology, Haifa, Israel}
\email{gekhtman@technion.ac.il}

\author{Nir Lazarovich}
\address{Department of Mathematics, Technion -- Israel Institute of Technology, Haifa, Israel}
\email{lazarovich@technion.ac.il}

\author{Arie Levit}
\address{Department of Pure Mathematics, Tel Aviv University, Israel}
\email{arielevit@tauex.tau.ac.il}

\author{Asaf Nachmias}
\address{Department of Pure Mathematics, Tel Aviv University, Israel}
\email{asafnach@tauex.tau.ac.il}

\thanks{IG was partially supported by the ISF grant  3423/24.  NL was supported by the ISF grant  1576/23. AL was supported by the ISF grant  1788/22 and the BSF grant 2022105. AN was supported by the ERC consolidator grant 101001124 (UniversalMap) as well as ISF
grants 1294/19 and 898/23.}

\begin{document}

\begin{abstract}
We investigate the relationship between the space of ends of a unimodular random hyperbolic manifold and the dynamics of its geodesic flow. 
We show that having two ends of infinite volume implies recurrence, while having infinitely many such ends implies positive drift and entropy. We also provide a transience criterion which applies to deterministic hyperbolic manifolds. Our method relies on studying Delaunay graphs over point processes and on the analytic notion of capacity.
\end{abstract}

\maketitle

\section{Introduction}
To motivate the discussion, let us examine a certain relationship between dynamics and topology in group theory. Let $G$ be a finitely generated group and $\mu$ be a symmetric probability measure on $G$ whose support generates the group. Some of the dynamical properties of the associated random walk can be arranged in the following hierarchy.

\begin{enumerate}[label=(\alph*),leftmargin=*]
\item $G$ is \emph{finite}.
\item $G$ is \emph{recurrent}, 
 i.e almost every sample path of the random walk returns to the identity. This is equivalent to the group $G$ being finite or virtually $\ZZ$ or $\ZZ^2$.
\item $G$ has \emph{zero speed}, i.e. the distance in the word metric to the identity grows sublinearly along almost every sample path.
\item $G$ is \emph{Liouville}, i.e. every bounded $\mu$-harmonic function is constant. For example, if $G$ is virtually nilpotent then it is Liouville for any such measure $\mu$. 
\item $G$ is \emph{amenable}, i.e. the spectral radius is strictly less than one.
\end{enumerate}

The relevant topological invariant is the space of ends $\Ends{G}$ of the group $G$. By Stallings theorem $|\Ends{G}| \in \{0,1,2\}$ or  $\Ends{G}$ is the Cantor space. Clearly $\Ends{G} = \emptyset$ if and only if we are in case (a). If $|\Ends{G}|=2$ then $G$ is virtually $\ZZ$, which implies case (b). If $\Ends{G}$ is a Cantor space then the group $G$ is non-amenable, which implies the negation of case (e).  The remaining (and in some sense the most abundant) case $|\Ends{G}| = 1$ is harder to categorize.

We move on to another related classical setting of  regular covers.
Consider a compact $n$-dimensional hyperbolic manifold $M$ and a regular cover $N \to M$ with a group of deck transformations $G$. The relationship between the dynamical properties of the cover $N$ and the algebraic or analytic properties of the group $G$ was investigated extensively \cite{guivarc1980loi,brooks1981relation,lyons1984function,kaimanovich1986brownian,ledrappier2010linear}. We outline some of the classical theory in the following hierarchy.

\begin{enumerate}[label=(\Alph*),leftmargin=*]
    \item $N$ is \emph{compact}.  In that case the manifold $N$ is \emph{positively recurrent}, i.e. the geodesic flow is recurrent with finite expected return time (by Kac's formula).
    \item $N$ is \emph{recurrent}, in the sense that the geodesic flow is recurrent, i.e. for almost every $x \in T^1N$ the geodesic flow $\gamma_t$ satisfies $\liminf_t d_N(\gamma_tx,x) < \infty$ where $d_N$ is the metric on $N$. Recurrence is equivalent to the ergodicity of the geodesic flow by Hopf's dichotomy \cite{hopf1971ergodic,tsuji1959potential,sullivan1979density}. 
    In the infinite-volume case $N$ will be \emph{null recurrent}, i.e. the expected return time is infinite.
   \item $N$ has \emph{zero speed},  namely for almost every $x \in T^1N$ the geodesic flow $\gamma_t$ satisfies $\lim_t d_N(\gamma_tx,x)/t = 0$.
    
    \item $N$ is \emph{Liouville}, namely every bounded harmonic function on $N$ is constant. This is equivalent to the ergodicity of the $\pi_1(N)$-action on the Gromov boundary by the Furstenberg transform. 

    \item $N$ has \emph{Cheeger constant zero}, in the sense that $h(N)  = \inf \frac{\mathrm{Area}(\partial S)}{\mathrm{Vol}(S)} = 0$ where the infimum is taken over all compact smooth submanifolds $S \subset N$. This is equivalent to saying that $\lambda_0(N) = 0$ \cite{cheeger1970lower,buser1982note} or that the critical exponent is $\delta(\pi_1(N)) = n-1$ \cite{elstrodt1973resolvente, patterson1976limit}. 
\end{enumerate}
The dependence between all of the above properties in the settings of groups and regular covers with a given group of deck transformations is
$$
\begin{tikzcd}
(a) \arrow[r,Rightarrow] \arrow[d,Leftrightarrow] & (b) \arrow[r,Rightarrow] \arrow[d,Leftrightarrow] & (c) \arrow[r,Leftrightarrow] \arrow[d,Leftrightarrow] & (d) \arrow[r,Rightarrow] \arrow[d,Leftrightarrow] & (e) \arrow[d,Leftrightarrow] \\
(A) \arrow[r,Rightarrow] & (B) \arrow[r,Rightarrow] & (C) \arrow[r,Leftrightarrow] & (D) \arrow[r,Rightarrow] & (E)
\end{tikzcd}
$$

Our goal will be to extend this relationship going beyond regular covers. We immediately run into two problems. First, a general hyperbolic manifold is not associated to any particular covering group. We resolve this by replacing the covering group with a weaker topological invariant, which still retains some useful dynamical and geometric information, i.e. the space of ends of the manifold. For regular covers of compact manifolds,  this is the same invariant as the space of ends of the group of deck transformations. However, in the presence of cusps we need to be more careful and distinguish the subspace $\EndsInf{N}$ of infinite-volume ends  of the manifold $N$.

The second problem is that, for an arbitrary manifold $N$, we cannot expect to be able to say much based solely on its space of (infinite-volume) ends $\EndsInf{N}$. For example, consider the funnel $F = \left<\gamma\right> \backslash \HH^2$, where $\gamma$ is some loxodromic isometry of the hyperbolic plane $\HH^2$. Then $|\EndsInf{F}|=2$ but $h(F) > 0$. At the other extreme, by gluing pairs of pants with rapidly shrinking cuff lengths, one may construct a hyperbolic surface $S$ such that $\EndsInf{S}$ a Cantor space but $h(S) = 0$. 

We point out that the recurrence question (i.e. property (B) above) for hyperbolic surfaces is a very delicate and classical one, admitting numerous equivalent characterizations and reformulizations, see e.g. \cite{basmajian2022type} and the references therein. Some authors call it the \emph{type problem} \cite{ahlfors2015riemann}. Recurrent surfaces are said to be of \emph{parabolic type}. The class of all parabolic type surfaces is denoted $O_G$.
On its own, the space of ends is a hopelessly weak invariant for resolving the type problem. Interesting such examples, where the recurrence question boils down to the converge of certain series involving lengths of closed geodesics on the surface, include the two-ended flute surfaces \cite{basmajian1993hyperbolic} or infinitely-ended Cantor trees \cite{mcmullen1998hausdorff,vsaric2024quadratic,pandazis2024nonergodicity,bordenave2026quadratic}.

\subsection{Unimodular random hyperbolic manifolds}
To deal with this issue, we move from a deterministic to a random setting, and introduce a powerful additional assumption called unimodularity \cite{abert2022unimodular}. A random pointed hyperbolic manifold is a probability measure on the Gromov--Hausdorff space of hyperbolic manifolds. Such a random object is called \emph{unimodular} if, roughly speaking, its distribution \enquote{looks the same from all points} (see \S\ref{sec:unimodular}
for the formal definition). Unimodular hyperbolic random manifolds generalize regular covers. Our point of departure is an extension of Stallings' result to the unimodular setting by Biringer and Raimbault.

\begin{theorem*}[\cite{biringer2017ends}]
\label{thm:Biringer--Raimbault}
Let $\mu$ be a unimodular random hyperbolic manifold. Then $\mu$-almost every manifold $M$ has $|\EndsInf{M}| \in \{0,1,2\}$ or $\EndsInf{M}$ is a Cantor space.
\end{theorem*}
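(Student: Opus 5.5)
The plan is to reduce the statement to the analogous fact for unimodular random graphs, where the trichotomy $0,1,2$ ends or a Cantor space of ends is standard: a unimodular random graph has $0$, $1$, $2$ or continuum many ends, and in the last case there are no isolated ends. The latter is the Aldous--Lyons extension of the Hopf--Freudenthal/Stallings argument. The reduction goes through a discretization that sees only the infinite-volume part of $M$.

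\textbf{Step 1: thick part.} Fix $\varepsilon$ below the Margulis constant. Let $M_{\ge\varepsilon}$ be the $\varepsilon$-thick part of $M$. By the thick–thin decomposition, each component of the thin part is either a Margulis tube or a cusp neighborhood. Both have finite volume, so they contribute no infinite-volume ends. Hence the infinite-volume ends of $M$ correspond to ends of $M_{\ge\varepsilon}$ that have infinite volume. This requires a little care: thin components may be noncompact cusps, and those are exactly the finite-volume ends we discard.

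\textbf{Step 2: unimodular discretization.} Take a Poisson point process of intensity $1$ on $M$, restricted to the thick part, and form the graph $G$ joining two points when their distance is at most some constant $R$. Equivalently, use the Delaunay graph that the paper later studies. Choosing $R$ large relative to $\varepsilon$, and adding a few deterministic-rule points if needed, makes $G$ connected on each component of the thick part. Each component is then quasi-isometric to it, uniformly, since the thick part has bounded geometry. Rooting at the point nearest the origin and reweighting by the standard Palm argument, unimodularity of $\mu$ passes to unimodularity of $(G,o)$ via the mass transport principle.

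One caveat needs handling. $M_{\ge\varepsilon}$ may be disconnected, for example by a thin tube separating it. To avoid this I would instead discretize all of $M$ but collapse each finite-volume thin component to a single vertex. Alternatively, I would choose $\varepsilon$ varying so that connectedness holds. Either way the unimodularity check is routine.

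\textbf{Step 3: ends correspondence.} The key step, and the main obstacle, is to show that the infinite ends of $G$ correspond bijectively and homeomorphically to $\EndsInf{M}$. Ends of $G$ are ends of the thick part. An end of $G$ with finitely many vertices in a neighborhood would be a finite-volume end, but a graph end always contains infinitely many vertices. The real issue is ends of $M$ of infinite volume versus ends of the thick part whose complementary pieces are finite-volume but unbounded.

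I would handle this with a mass transport argument. Suppose that with positive probability $G$ had an end whose neighborhoods have finite expected Poisson mass. Send mass from each vertex in such a region to a canonically chosen separating finite set, for instance a minimal cut. Each sender sends mass at most $1$, but infinitely many vertices could send to the same finite set, giving an infinite receive against a finite send. This is the standard contradiction, and it rules out such ends.

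Given this, the Aldous--Lyons theorem applied to $G$ gives $0$, $1$ or $2$ ends, or a Cantor set with no isolated points. Compactness of the end space transfers the Cantor structure to $\EndsInf{M}$, which proves the statement.
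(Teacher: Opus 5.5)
\textbf{The reduction fails in dimension $n\ge 3$.} The error is the claim in Step~1 that every thin component has finite volume. Cusps of rank $k<n-1$ have infinite volume; the paper notes that its cusps ``can be either of finite or infinite volume''. Such a cusp has unbounded frontier $\cong F\times\RR^{n-1-k}$ and can fuse several ends of the thick part into a single end of $M$, so the homeomorphism $\Ends{G}\cong\EndsInf{M}$ that Step~3 needs is false.

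Here is a concrete unimodular counterexample. Let $N$ be a cusped hyperbolic $3$-manifold fibering over the circle with fiber a once-punctured torus $S$ (e.g.\ the figure-eight knot complement). Let $M$ be its infinite cyclic cover; it is unimodular as a regular cover of a finite-volume manifold. Then $M\cong S\times\RR$ is one-ended, so $|\EndsInf{M}|=1$. On the other hand, the cusp of $N$ lifts to a rank-one cusp $\cong S^1\times\RR\times[0,\infty)$ of infinite volume. The thick part of $M$ is connected and $\ZZ$-cocompact, hence two-ended, so any graph quasi-isometric to it (which is what Step~2 aims for) has two ends.

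In general you only get a continuous map $\Ends{G}\to\EndsInf{M}$ that need not be injective. This is why the paper's thick-graph lemma claims only $|\Ends{G}|\ge|\EndsInf{M}|$. The dichotomy ``$0,1,2$ or Cantor'' does not pass through such maps: a Cantor space maps continuously onto a three-point space, or onto $\{0\}\cup\{1/k : k\ge 1\}$, which are exactly the end spaces the theorem must exclude. So the graph theorem applied to $G$ does not give the statement. To repair it you would have to discretize the infinite-volume cusps as well while keeping a finite unimodular normalization, and you do not address this; the paper's thick graph deliberately discards cusp vertices. In dimension $2$ all cusps have finite area, so there your route is plausible, but the statement is for all $n$.

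There are two further problems. First, the obstacle you single out in Step~3 does not exist, and the mass transport offered for it does not make sense. On the thick part, $\mathrm{InjRad}\ge\varepsilon$ gives disjoint embedded $\tfrac{\varepsilon}{2}$-balls of volume bounded below, so every unbounded subset already has infinite volume. Conversely, a region of finite Poisson mass contains only finitely many vertices, so ``infinitely many vertices sending to one finite cut'' cannot occur. Second, in Step~2 a Poisson $R$-graph is not quasi-isometric to the thick part for any fixed $R$. In the $\ZZ$-periodic example, a separating slab made of boundedly many translates of a fundamental domain is empty with positive probability, and these events are independent for disjoint slabs. So almost surely every component of $G$ is finite, and you would need a net or a controlled point process as in the paper.

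For comparison, the paper does not prove this statement; it quotes Biringer--Raimbault. As the paper remarks, their argument applies verbatim to unimodular Freudenthal metric measure spaces. It therefore works directly with the volume measure and end neighbourhoods of $M$, and never has to match the ends of a discretization with $\EndsInf{M}$.
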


We remark that a unimodular hyperbolic manifold is essentially the same thing as a discrete invariant random subgroup of the group of isometries $\mathrm{SO}(n,1)$, see \cite[Theorem 1.4]{biringer2017unimodularity} and \cite[Theorem 2.8]{abert2022unimodular}, but we will not be pursuing the latter viewpoint here. Another remark is that the result of \cite{biringer2017ends} continues to hold for stationary random hyperbolic manifolds \cite{Kfir}. 

Our first main result is the  analogue of the above-mentioned fact that a two-ended finitely generated group is recurrent:
 
\begin{theorem}
\label{thm:main two ended}
Let $\mu$ be a unimodular random hyperbolic manifold. If $\mu$-almost every manifold has  two infinite-volume ends then $\mu$-almost every manifold is recurrent.
\end{theorem}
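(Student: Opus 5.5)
We plan to reduce recurrence of the geodesic flow to recurrence of a random walk (or electrical network) on a unimodular graph, and then use that a unimodular graph with two ends ``grows linearly'' in the direction between the ends.

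First, discretize. Given a unimodular random hyperbolic manifold $M$, we take an invariant point process on $M$. For instance, we use a Poisson process on the thick part, together with a marking of the cusps. We then form the associated Delaunay graph, which is a unimodular random graph quasi-isometric (coarsely) to the thick part of $M$. Infinite-volume ends of $M$ correspond to ends of this graph carrying infinitely many vertices, while cusps (finite-volume ends) are collapsed or handled separately. By Hopf's dichotomy, recurrence of the geodesic flow is equivalent to $M$ being of parabolic type, i.e. zero capacity of compact sets, equivalently recurrence of Brownian motion. We will use the analytic notion of capacity to compare this with recurrence of the network: the capacity of a ball in $M$ is bounded by a constant times the effective conductance to infinity in the Delaunay graph. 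This requires bounded local geometry, which the thick-part construction guarantees, and cusps do not contribute capacity in dimension $n$ since they have finite volume.

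Second, the graph problem. Almost surely the Delaunay graph $G$ is a unimodular random graph with exactly two infinite ends, and possibly some finite hanging pieces. A standard mass-transport argument will show that such a graph is recurrent. Choose, unimodularly, a ``core'' separating the two ends; the key claim is that $G$ has linear growth along a bi-infinite path, with cut sets of uniformly bounded expected size. Concretely, we define for each vertex $v$ the set of minimal cuts separating the two ends near $v$. Using mass transport and the fact that there are only two ends, we show there exists an invariant sequence of disjoint finite cutsets $\ldots,C_{-1},C_0,C_1,\ldots$ separating the ends, with $\mathbb{E}|C_i|$ finite in the appropriate Palm sense. Nash-Williams then gives
\[
\mathcal{R}_{\mathrm{eff}}(o\to\infty)\ \ge\ \sum_i \frac{1}{|C_i|}=\infty,
\]
where the divergence follows by ergodic-type averaging (Birkhoff along the $\ZZ$-indexed cut sequence, whose stationarity follows from unimodularity). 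Hence the graph is recurrent.

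The main obstacle is twofold. First, we must construct cutsets of finite expected size in a unimodular way. Unimodularity alone does not prevent degree or cut sizes from having heavy tails, and the cut sequence must be shown stationary under re-rooting. Second, we must control the transfer back to the manifold, in particular bounding capacity near thin parts (Margulis tubes and cusps). For the first issue we would try the approach of Biringer--Raimbault's proof: choose minimal-volume separating hypersurfaces of bounded area via a mass-transport selection. If $\mathbb{E}|C_i|$ were infinite, then infinitely many disjoint heavy cuts would force an infinite mass transport to the root, contradicting unimodularity.
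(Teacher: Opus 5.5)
\textbf{There is a genuine gap, and it sits in your Step~1.} It is the same obstruction the paper runs into.

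To deduce recurrence of $M$ from recurrence of a graph, the comparison must run in the direction ``low-energy function on the graph $\Rightarrow$ low-energy function on all of $M$'', thin part included.

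\begin{itemize}
\item A Poisson--Delaunay graph does not give this even on the thick part. It has unbounded degrees, arbitrarily long edges and arbitrarily flat simplices, so none of the uniform estimates behind Theorem~\ref{thm:transience passes to good graphs} hold.
\item The configurations that do give uniform estimates are adapted to $\mathrm{InjRad}_M$. Their Delaunay graphs in general cannot be normalized to a unimodular probability measure (see the remark following the proof of Theorem~\ref{thm:infinitely ended unimodular random surfaces are transient}).
\item Restricting to the thick part restores unimodularity but pushes the problem into the thin part, which you do not handle. Discarding a region can only \emph{decrease} capacity, so recurrence of the thick part does not imply recurrence of $M$ unless Margulis tubes are collapsed.
\item Your claim that cusps have finite volume is false in the generality of the theorem. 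Cusps of rank $<n-1$ (e.g.\ rank-one cusps in $\HH^3$) have infinite volume and non-compact boundary, so they cannot be collapsed to a vertex of finite degree. Deleting them can change the end structure: Lemma~\ref{lemma:properties of thick graph} only gives $|\Ends{G}|\ge|\EndsInf{M}|$. Extending test functions into them is not free either: in a rank-one cusp of $\HH^3$, the depth-independent extension of any non-constant boundary function already has infinite energy.
\end{itemize}

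Step~2 is also only heuristic. Unimodularity does not hand you a stationary $\ZZ$-indexed sequence of cutsets, and ``infinitely many heavy cuts would force infinite transport'' is not an argument. What mass transport does give is this: cutsets of the minimal size, which always exist in a two-ended graph, occur arbitrarily far toward both ends. Otherwise a canonical extreme one would be a finite set receiving infinite mass. Nash-Williams with $|C_i|$ bounded then suffices, with no stationarity or moment bound needed.

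\textbf{This observation lets you skip Step~1 by running the argument on $M$ itself, with balls in place of cutsets.} Let $\rho_M(p)$ be the least $r$ such that $M\setminus B(p,r)$ has two infinite-volume components, as in the proof of Theorem~\ref{thm:two ended is bipolar}. For extremal $\mu$ and large $r$, the set $\{\rho_M<r\}$ is almost surely non-empty. An argument modeled on the domination argument there shows it reaches arbitrarily far toward both infinite-volume ends: if the balls stayed bounded toward some end, the extreme non-dominated centers would form a bounded canonical set receiving infinite mass. Now do the following.
\begin{itemize}
\item Choose $N$ such balls $B(p_j^\pm,r)$ on each side of a compact $K$, far apart.
\item Let $h_j^\pm$ be the $\tfrac12$-Lipschitz function switching from $0$ to $1$ across $B(p_j^\pm,r+1)$.
\item Take $1-\frac1N\sum_j(h_j^++h_j^-)$, clipped to $[0,1]$ and truncated deep in the finite-volume cusps.
\end{itemize}
This function has energy at most $\mathrm{vol}_{\HH^n}(B(r+1))/(2N)$ plus an arbitrarily small truncation cost, so $\capacity{K}=0$. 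The uniform bound $\vol_M B(p,r)\le\mathrm{vol}_{\HH^n}(B(r))$ replaces your ``finite expected cut size'', and Margulis tubes and cusps never need special treatment.

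The paper takes a different route. It assumes transience and uses a mass transport along geodesic-flow orbits (Lemma~\ref{lemma:MTP along geodesics}) to show that almost every geodesic is bipolar (Theorem~\ref{thm:two ended is bipolar}). The symmetry $\vec v\mapsto-\vec v$ then forces the probability of exiting through each end to be $\tfrac12$ from every point. So the harmonic function built in Lemma~\ref{lem:ends and Liouville} is constant, contradicting L\'evy's zero--one law.
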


While the implications between the properties (A)--(E) are not true in complete generality for arbitrary manifolds, they do carry over to unimodular random manifolds (and more generally, to stationary random graphs \cite{benjamini2012ergodic} or manifolds \cite{lessa2016brownian}). For example, it is a consequence of our Theorem \ref{thm:main two ended} that a unimodular random hyperbolic manifold with two infinite-volume ends is \emph{amenable}, i.e. has zero Cheeger constant in the sense of property (E) above (this consequence can also be obtained directly). 

Our second main result is the analogue of the fact that an infinitely-ended finitely generated group is non-Liouville:

\begin{theorem}
\label{thm:main infinitely ended}
Let $\mu$ be a unimodular random hyperbolic manifold. If $\mu$-almost every manifold has infinitely many infinite-volume ends   then $\mu$-almost every manifold is non-Liouville. In particular, it has positive drift and positive volume growth.
\end{theorem}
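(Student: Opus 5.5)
We reduce to the analogous statement for unimodular random graphs, by way of a Delaunay graph built over a point process, and then pass back to the manifold using the analytic notion of capacity.

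\emph{Step 1: discretization.} Given the unimodular random manifold $M$, we sample an invariant Poisson point process on $M$, restricted to the thick part, or a suitably thickened version of it. We form the associated Delaunay graph, with edges weighted or marked by geodesic lengths. After the usual reweighting by the intensity, this gives a unimodular random rooted graph $G$. We check three properties of $G$:
\begin{itemize}
\item[(i)] $G$ is quasi-isometric to the thick part of $M$, or to the union of the convex core with the funnels truncated.
\item[(ii)] Infinite-volume ends of $M$ correspond bijectively and homeomorphically to ends of $G$. Cusps are finite-volume ends and are cut off by the thickening, so they contribute no ends of $G$.
\item[(iii)] Degrees are not bounded, but the degree of the root has finite exponential moments, coming from Poisson tail bounds in hyperbolic space.
\end{itemize}

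\emph{Step 2: infinitely many ends on the graph side.} By the theorem of Biringer--Raimbault, $\EndsInf{M}$ is almost surely a Cantor space, so $G$ almost surely has infinitely many ends. We then invoke the unimodular graph theory of Aldous--Lyons and Benjamini--Lyons--Schramm. A unimodular random graph with infinitely many ends is non-amenable in the invariant sense, since it admits an invariant random forest with infinitely many ends in each tree. Consequently, simple random walk (or the stationary reversible walk, suitably weighted) has positive speed and positive entropy, and is non-Liouville. Concretely, we can produce a nonconstant bounded harmonic function as the probability that the walk converges to one side of a separating compact set. Both sides are infinite-ended and so are transient, and one shows the walk eventually stays in one side with nondegenerate probabilities.

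\emph{Step 3: transfer back to $M$.} This step is the main obstacle: passing non-Liouville and positive drift from $G$ to Brownian motion and the geodesic flow on $M$. My approach is to compare capacities. A compact separating set $K \subset M$ splits $M$ into two pieces, each containing infinitely many infinite-volume ends. I want each piece to have positive capacity relative to infinity, so that Brownian motion escapes into it with positive probability. The hitting probability of one side then gives a nonconstant bounded harmonic function. The capacity comparison uses the quasi-isometry from Step 1 together with local Harnack and Poincaré estimates on the thick part; here the moment bounds from Step 1 are needed to control unbounded degrees. Unimodularity is used to make the bounds uniform, namely through mass transport applied to the bad regions with large degree or thin injectivity radius.

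\emph{Step 4: drift and volume growth.} Once non-Liouville is established, positive drift and positive volume growth follow from the entropy theory for stationary random manifolds of Lessa. In that setting, non-Liouville is equivalent to positive Kaimanovich--Vershik entropy, which forces linear drift and exponential volume growth.
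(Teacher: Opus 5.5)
\textbf{Assessment.} Your Steps 1 and 4 match the paper's outline, but there is a genuine gap in Step 3 and in the graph-side claim that feeds it.

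\textbf{The gap.} Your criterion is the right one: a compact $K$ whose complement has two pieces, each of positive capacity relative to infinity. But for a transient $M$, the existence of such a $K$ is equivalent to the exit measure of Brownian motion not being a point mass on a single end. That is exactly what must be proved, and your proposal assumes it.
\begin{itemize}
\item On the graph side, ``both sides are infinite-ended and so are transient'' is false. A spherically symmetric tree that branches only at very sparse levels is recurrent, yet it has a Cantor set of ends.
\item Suppose you grant, from the unimodular-graph literature, that $G$ has a finite separating set with two transient sides. The transfer to $M$ still fails as described. Non-Liouville is not a quasi-isometry invariant (T.~Lyons), so it has to be carried by the end structure.
\item Your claim (ii) fails in general. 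In dimension $2$, collars around short separating geodesics disconnect the thick part. If you instead keep Delaunay edges across the collars, their lengths are unbounded and (i) fails. For $n\ge 3$ there are cusps of infinite volume, so ``cusps are finite-volume ends'' is wrong, and removing them changes the end structure.
\item The paper only obtains an injection $\EndsInf{M}\hookrightarrow\Ends{G}$ (Lemma~\ref{lemma:properties of thick graph}). So two transient pieces of $G$ need not lie in distinct components of $M\setminus K$ for any compact $K$.
\item Moment bounds and mass transport control averages over the root. They do not give the capacity inequality you need for a fixed sample. The paper's comparison also needs \emph{good} configurations (edge lengths comparable to the injectivity radius, angles bounded below), which a Poisson--Delaunay graph does not provide.
\end{itemize}

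\textbf{The missing idea.} Use the graph only for \emph{transience}, which is monotone: a graph containing a transient subgraph is transient, so you may discard bad regions. Then prove that the exit map is nontrivial directly on $M$. The paper proceeds as follows.
\begin{itemize}
\item It takes a good controlled point process.
\item It forms the thick graph by collapsing Margulis tubes and deleting cusps, and reweights it to be unimodular.
\item By Lemma~\ref{lemma:finding trees} it extracts a transient component on which the injectivity-radius labels are at least $s$, and lifts it to the Delaunay graph.
\item It concludes that $M$ is transient via the weighted capacity comparison of Theorem~\ref{thm:transience passes to good graphs}.
\end{itemize}
It then shows by a mass transport on $M$ itself (Theorem~\ref{thm:random walk does not converge to a single end}) that the exit map is not essentially constant. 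Suppose every geodesic exits to a single end $\zeta_M$. Let $x$ send mass with density one to each $y$ such that $d_M(x,y)\in[R,R+1]$ and $x$ lies in an unbounded component of $M\setminus B(y,r)$ other than the neighborhood of $\zeta_M$.
\begin{itemize}
\item All such $y$ lie in a single ball of radius $2r+1$, so each $x$ sends at most $\vol_{\HH^n}B(2r+1)$.
\item By Corollary~\ref{cor:thick part is infinitely ended}, once $R=R(N)$, the root receives at least $N\cdot\vol_{\HH^n}B(\varepsilon)$ with probability bounded below.
\end{itemize}
For $N$ large this contradicts unimodularity. Only then does the hitting probability of a set of ends give the nonconstant bounded harmonic function (Lemma~\ref{lem:ends and Liouville}), and your Step~4 finishes the argument.
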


The conclusion of Theorem \ref{thm:main infinitely ended} certainly implies that $\mu$-almost every manifold is transient (i.e. non recurrent), in distinction from the two-ended case considered in Theorem \ref{thm:main two ended}. We note that unlike the more strict case of regular covers (or finitely generated groups), a unimodular random hyperbolic manifold with infinitely many infinite-volume ends may almost surely be either amenable or non-amenable.

As part of the proof of both Theorems \ref{thm:main two ended} and \ref{thm:main infinitely ended} we are led to consider the interesting notion of an \emph{exit map}. This is a natural map $\zeta : T^1M \to \Ends{M}$ defined from the unit tangent bundle of a given \emph{transient} hyperbolic manifold to its space of ends. The properties of this map in the unimodular random setting are studied \S\ref{sec:ends and entropy}.

\begin{remark}
For unimodular random graphs with finite expected root degree, the analogues of Theorem \ref{thm:main two ended} and Theorem \ref{thm:main infinitely ended} were obtained 
in  \cite{angel2016unimodular} and 
\cite[Proposition 7.6]{angel2018hyperbolic} respectively.
\end{remark}

\subsection{Delaunay graphs and point processes}
For general unimodular random hyperbolic manifolds, including the ubiquitous one-ended ones, the situation is more complicated and not much can be said in general using our methods.
However, we do provide a criterion for transience (which can be regarded as a certain solution of the type problem) that applies in both the deterministic and the random setting: 

Given a hyperbolic manifold $M$ and a discrete point configuration $m$ on it,  we study the associated Delaunay graph $\DelanGraph(M,m)$. This is a graph whose vertices represent the Voronoi cells at the points of configuration $m$ and whose edges are geodesic arcs corresponding to neighboring Voronoi cells. We call the point configuration $m$ \emph{good} (Definition \ref{def:good points}) if its Delaunay graph enjoys certain geometric properties, most notably, that the angle between any pair of incident edges of the graph $\DelanGraph(M,m)$ is uniformly bounded away from zero.

We now state a deterministic result relating the transience of a hyperbolic manifold with the transience of a Delaunay graph on it.

\begin{theorem}
\label{thm:main:general transient condition}
Let $M$ be an $n$-dimensional hyperbolic manifold and $m$ a good point configuration on $M$. Equip the Delaunay graph $G = \DelanGraph (M,m)$  with transition probabilities proportional to the $(n-2)$-th powers of the distances in $M$. Namely,  for each vertex $v \in V(G) = m$ and edge $ \{v,u\} \in E(G)$  set the transition probability 
$$ p(v,u) = \frac{d_M(v,u)^{n-2}}{\sum_{\{v,w\} \in E(G)} d_M(v,w)^{n-2}}.$$
Then the manifold $M$ is a transient  if and only if the Delaunay graph $G$ is transient with respect to the random walk with transition probabilities given by $p$.
\end{theorem}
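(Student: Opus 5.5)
The plan is to compare capacities. Transience of the geodesic flow on $M$ is, by the Hopf--Tsuji--Sullivan dichotomy, equivalent to transience of Brownian motion on $M$. That is, $M$ is transient if and only if some (equivalently every) compact ball $B\subset M$ has positive capacity $\capacity{B}=\inf\{\int_M|\nabla f|^2 : f\in C_c^\infty(M),\ f\ge 1 \text{ on } B\}$. Likewise, the reversible random walk on $G=\DelanGraph(M,m)$ with conductances $c(u,v)=d_M(u,v)^{n-2}$ is transient if and only if a vertex has positive effective conductance to infinity. Equivalently, $\inf\{\sum_{\{u,v\}\in E} c(u,v)(\phi(u)-\phi(v))^2\}>0$, the infimum taken over finitely supported $\phi$ with $\phi(v_0)=1$. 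Note that the walk in the statement is exactly this reversible walk, with stationary measure $\pi(v)=\sum_w c(v,w)$. So it suffices to show that the two Dirichlet energies are comparable, up to constants depending only on $n$ and the goodness constants of Definition \ref{def:good points}, under suitable discretization and interpolation maps.

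First I would show that manifold energy dominates graph energy (giving: $G$ transient $\Rightarrow$ $M$ transient). Given $f\in C_c^\infty(M)$, define $\phi(v)$ as the average of $f$ over a small ball $B(v,r_0)$. Goodness should supply the needed injectivity radius or embedded-ball bound, perhaps after handling cusps via the structure of the configuration. For an edge $\{u,v\}$ of length $\ell$, connect the two averaging balls by a tube of radius $\asymp r_0$ around the geodesic edge, i.e.\ a family of parallel curves. Cauchy--Schwarz along each curve, averaged over the cross-section, gives
$$(\phi(u)-\phi(v))^2\lesssim \frac{\ell}{r_0^{\,n-1}}\int_{T_{uv}}|\nabla f|^2.$$
This gives the wrong weight if one uses cylinders. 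The correct choice is instead a cone-like family of curves fanning out from $u$ within the Delaunay simplex. The key point is that the $(n-2)$-th power arises because the cross-section of the region of the Delaunay cell associated to an edge of length $\ell$ has $(n-1)$-volume $\asymp \ell^{n-1}$. This yields $c(u,v)(\phi(u)-\phi(v))^2\lesssim\int_{R_{uv}}|\nabla f|^2$, where $R_{uv}$ is the union of Delaunay simplices containing the edge. The bounded angle condition is what guarantees that these regions have bounded overlap, so summing gives the comparison.

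Conversely, graph energy dominates manifold energy (giving: $M$ transient $\Rightarrow$ $G$ transient). Given finitely supported $\phi$ on $m$, extend it to $f$ on $M$ by piecewise-linear interpolation on the (geodesic) Delaunay simplices, in barycentric coordinates. On a simplex $\sigma$, the gradient satisfies $|\nabla f|\lesssim \max_{e\subset\sigma}|\delta\phi(e)|/\ell(e)$ times a factor controlled by the minimal angle. Also $\vol(\sigma)\lesssim \prod$ of edge lengths, which is bounded by $\ell^{n-2}\cdot\ell^2$ when edges are comparable. Here I expect to need that goodness forces edges sharing a simplex to have comparable lengths, or at least an angle-based substitute. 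This yields $\int_\sigma|\nabla f|^2\lesssim\sum_{e\subset\sigma}\ell(e)^{n-2}\delta\phi(e)^2$. Each edge lies in boundedly many simplices by the angle condition, so $\int_M|\nabla f|^2\lesssim \mathcal E_G(\phi)$. Finally, $f\ge1$ on a neighborhood of $v_0$ (after a small modification), giving positive capacity of a ball.

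The main obstacle is the geometry of possibly very long, thin Delaunay simplices, for example in cusps or thin parts of $M$. Both the volume/cross-section estimate $\asymp\ell^{n-1}$ and the gradient bound for linear interpolation require nondegeneracy of the simplices. I would first try to extract exactly these two estimates, together with bounded overlap, from the uniform angle lower bound in the definition of a good configuration, using hyperbolic trigonometry on the simplices lifted to $\HH^n$.
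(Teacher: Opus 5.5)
Your two-sided capacity comparison is the paper's strategy. The discretization half (manifold energy controls graph energy) has a genuine gap, and you misdiagnose it: the problem is the averaging scale, not the shape of the curve family.

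\textbf{The discretization half.} Suppose $\phi(v)$ averages $f$ over $B(v,r_0)$ with $r_0\ll\ell$, where $\ell$ is the length of an edge at $v$. Then the per-edge bound $\ell^{n-2}(\phi(u)-\phi(v))^2\preceq\int|\nabla f|^2$ is false for \emph{every} choice of curves. To see this, take $f$ to be the equilibrium potential of $B(u,r_0)$ inside $B(u,c\ell)$ for small $c$. Then:
\begin{itemize}
\item $\phi$ is essentially the indicator of $u$, so its graph energy is $\succeq\ell^{n-2}$;
\item but $\int|\nabla f|^2\asymp r_0^{n-2}$, or $\asymp 1/\log(\ell/r_0)$ when $n=2$.
\end{itemize}
Anchoring the curves at the point $u$ itself is worse, since points have zero capacity.

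Your heuristic that the weight comes from a cross-section of size $\asymp\ell^{n-1}$ is right. But it means the curves must start and end on sets of that cross-section, i.e.\ the averaging balls themselves must have radius of order $\ell$. A cone has vanishing cross-section at its apex.

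The paper takes $r_v=\frac{\delta}{3}\min_{e\ni v}\ell(e)$. These balls are embedded by (G1), and $r_u\approx r_v\approx\ell$ by \Cref{remark:properties of good}. With radii of this size your own tube bound $\frac{\ell}{r_0^{n-1}}\int_T|\nabla f|^2$ already carries the correct weight $\ell^{2-n}$, so cones are unnecessary. Since $r_u\neq r_v$ in general, the paper does not translate one ball onto the other. It joins independent uniform points of $B_u$ and $B_v$ by geodesics. \Cref{lemma:density of a random point on a uniform arc} bounds the resulting averaged arclength measure by $C\,\mathrm{vol}_M(W_{u,v})^{-1}\mathbf{1}_{W_{u,v}}\mathrm{vol}_M$, where $W_{u,v}=\mathrm{conv}(B_u\cup B_v)$ has volume $\asymp\ell^n$. \Cref{lemma:sausage} then gives bounded overlap.

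\textbf{The interpolation half.} This is the paper's argument. There it is made precise by moving each simplex to a Euclidean one through the Klein model (\Cref{lemma:transforming a hyperbolic simplex to a Euclidean simplex}) and then applying \Cref{lemma:convex combination on simplex}.

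Your requirement that $\phi=1$ on $v_0$ and all its neighbours, so that the interpolant equals $1$ on an open set, is genuinely needed. The paper glosses over it: it concludes that the point $v_0$ has capacity zero, which holds in every manifold of dimension $n\ge 2$.

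Long thin simplices are not the real danger; (G1) and the comparability of incident edges in \Cref{remark:properties of good} already exclude them. However, your plan to extract nondegeneracy from the edge-angle bound (G3) fails once $n\ge3$, because (G3) does not exclude slivers. The tetrahedron with vertices $(\pm1,0,0)$ and $(0,\pm1,\eta)$ has all edge angles at least $\pi/4$ but volume $\frac{2}{3}\eta$. The linear interpolant of the values $0,0,1,1$ on it is $z/\eta$, so its energy blows up as $\eta\to0$. Such tetrahedra occur in Delaunay triangulations of slightly perturbed cubic lattices.

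The paper's \Cref{lemma:convex combination on simplex} makes exactly this inference: it claims pairwise angle bounds control the change of basis, which fails for $n\ge3$. The compactness argument in \Cref{lemma:sausage} breaks down for the same reason, since the two diagonals of a sliver are non-incident edges at distance $\eta$. So for $n\ge3$, both your route and the paper's need an extra nondegeneracy hypothesis, e.g.\ $\mathrm{vol}(\Sigma)\succeq r_\Sigma^{\,n}$ for every Delaunay simplex $\Sigma$. For $n=2$, (G3) suffices.
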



Theorem \ref{thm:main:general transient condition} is based on an analysis of capacities of graphs and of manifolds. It is inspired to a large extent by the ideas of \cite{gurel2017recurrence}, where a similar statement is obtained for planar surfaces (in dimension two). Theorem \ref{thm:main:general transient condition} allows us to reduce the transience problem for manifolds to an analogous one for graphs, which is typically easier to settle.

We address the problem of constructing good point configurations on hyperbolic manifolds and show that these always exist (Theorem \ref{thm:exists good point process}). The construction of a good point configuration is probabilistic (even for a given deterministic hyperbolic manifold). The idea is to make use of so-called \emph{controlled point processes} (Definition \ref{def:controlled point}). Roughly speaking, these are point processes whose intensity is comparable to the injectivity radius function on the manifold. A controlled point process is not hard to construct by a rather explicit probabilistic metric thinning method, and is sufficiently well-behaved geometrically to produce good point configurations (\S\ref{sec:controlled point process}).





\subsection{Asymptotic notations} We find it convenient to use the following asymptotic notations. We will write
$$ F \preceq G $$
to mean that $F \le C\cdot G$ for some implicit constant $C > 0$. We will write 
$$ F \approx G $$
to mean that both $F \preceq G $ and $G \preceq F $, namely, there is some implicit constant $C > 1$ such that $C^{-1} \cdot F \le G \le C \cdot F$.
Subscripts of the form $\preceq_n$ or $\approx_n$ will indicate that the constant implicit in the notation depends on $n$.

\newpage 

\subsection{Glossary}

Here is a glossary for some notations and symbols used in this work.


\begin{center}
\begin{longtable}{ll}
\hline
$(M,d_M,\vol_M)$ & a metric measure space $M$ with metric $d_M$ and measure $\vol_M$ \\
$\MM^n$ &  space of all $n$-pointed metric measure spaces \\
$\MM$ &  space of all pointed metric measure spaces (same as $\MM^1$)\\
\hline
$\Graphs$ &  subspace of pointed graphs in $\MM$\\
$\Complexes$ &  subspace of pointed simplicial complexes in $\MM$ \\
$\Hypers$ &  subspace of hyperbolic manifolds in $\MM$ \\
$\Riemanns$ &  subspace of Riemannian manifolds in $\MM$ \\
\hline
$L_\mu$/$R_\mu$ &  left/right measure defined on $\MM^2$ from $\mu\in\Prob{\MM}$  \\
$\mu_P$ &  measure obtained from $\mu\in\Prob{\MM}$ by conditioning on $P$\\
\hline
$\mathcal{U}(Q)$ & set of unbounded connected components of the complement of $Q$ \\
$\Ends{M}$ & space of ends of $M$ \\
$\EndsInf{M}$ & space of infinite-volume ends of $M$ \\
$\EndsFin{M}$ & space of finite-volume ends of $M$ \\
\hline
$\Points(M)$ & space of point configurations on the metric measure space $M$\\
$V_m(x)$ & Voronoi cell at the point $x \in m$ where $m \in \Points(M)$\\
$A_m(p)$ & index of Voronoi cell containing the point $p \in M$ where $m \in \Points(M)$\\
$\MP$ & space of pairs $(M,m)$ where $M \in \MM$ and $m \in \Points(M)$ is admissible \\
$\HypersP$ & same as $\MP$ with $M \in \Hypers$ being a hyperbolic manifold  \\
$\DelanGraph$ & Delaunay graph; formally, a map $\DelanGraph : \MP \to \Graphs$ \\
$\DelanComplex$ & Delaunay triangulation; formally, a map $\DelanComplex : \MP \to \Complexes$ \\
$l(e)$ & the length of the edge $e$ in the Delaunay graph\\
\hline
$\Prob{\Points(M)}$ & space of point processes on the metric measure space $M$\\
$\lambda_\theta$ &  intensity measure of the point process $\theta$\\
$\Pi_M$ & Poisson point process on the metric measure space $M$\\
\hline
$\mathrm{Prob}(\MP)$ & space of point processes over random metric measure spaces \\
$\Pi_\mu$ & Poisson point process over the unimodular metric measure space $\mu$\\
\hline
$\HH^n$ & $n$-dimensional hyperbolic space\\
$l_{x,y}$ & geodesic arc between the two points $x,y \in \HH^n$ \\
$\lambda_{x,y}$ & Lebesgue measure on the arc $l_{x,y}$ satisfying $\lambda_{x,y}(l_{x,y}) = d_{\HH^n}(x,y)$ \\
$\mathrm{InjRad}_M(x)$ & injectivity radius of the manifold $M$ at the point $x$\\
$\mathrm{cap}(K)$ & capacity of the subset $K$ in a graph or a manifold\\
$w$ & edge weight function controlling the $w$-transience of a graph\\
\hline
$B_v$ & ball of radius $r_v$ at the point $v$ on the hyperbolic  manifold $M$\\
$\nu_v$ & hyperbolic volume of the ball $B_v$\\
$\theta_{u,v}$ & law of random point on a random geodesic arc from $B_v$ to $B_u$\\
\hline
$f$ & function controlling a point process \\
$\lambda$ & fixed constant in the range $(\frac{1}{10},\frac{1}{6})$ used to scale the function $f$\\
$\mathcal{T}_M$ & thinning map for point configurations on the space $M$\\
\hline
$M_\textrm{thin}$/$M_\textrm{thick}$ & thin/thick part of the manifold $M$ \\
$\tau_M$ & infimum of injectivity radius over connected components of $M_\textrm{thin}$\\
$\ThickGraph$ & pointed thick graph construction map $\ThickGraph : \ThickHypersP\to\GraphsU$\\
$\ThickGraph(M,m)$ & thick graph on the manifold $M$ with point configuration $m$\\
$\ThickManifold{M}{m}$ &union of all Voronoi cells $V_m(x) \subset M$ for which $\tau_M(x) > 0$ \\
$\ThickHypersP$ & space of all $(M,m,p) \in \HypersP$ such that in addition $p \in \ThickManifold{M}{m}$\\
$\GraphsU$ & space of pointed $\left[0,1\right]$-edge-labeled graphs\\
\hline
$\HypersV$ & space of pointed hyperbolic manifolds with a unit tangent vector\\
$\gamma_t$ & geodesic action starting at some $(M,p,\vec v) \in \HypersV$ for all $t \in \mathbb{R}$\\
$\vec \mu$ & canonical lift a unimodular random hyperbolic manifold $\mu$ to $\HypersV$\\
$\HypersV^2$ & $\Hypers$ with two unit vectors tangent to the same geodesic\\
$L_{\vec \mu}, R_{\vec \mu}$ & left/right measures on $\HypersV^2$ arising from measure on the geodesic\\
$\DSubtf{\mathrm{SO}(n,1)}$ & Chabauty space of discrete torsion-free subgroups of $\mathrm{SO}(n,1)$\\
$\zeta_M$ & exit map on the hyperbolic manifold $M$ mapping $T^1 M$ to $\Ends{M}$\\
\hline
\end{longtable}
\end{center}

\section{Unimodular metric measure spaces}
\label{sec:unimodular}

In this preliminary section we introduce some key notions, including metric measure spaces, which is a natural framework to define and study unimodular random objects. We also discuss the space of ends associated to a metric space. 

For a much more detailed introduction to unimodular metric measures spaces, their properties and various examples, we refer the reader to \cite{khezeli2023unimodular}.

\subsection{Metric measure spaces}
\label{subsec:mms}

A metric measure space is essentially a space equipped with both a metric and a measure in a compatible manner. 
More precisely, we use the following  definition from \cite[\S3]{bowen2015cheeger}.

\begin{definition}
\label{def:metric measure space}
A \emph{metric measure space} is a triplet $(M,d_M,\vol_M)$ where $(M,\vol_M)$ is a  separable proper metric space and $\vol_M$ is a positive Radon measure on $M$. The measure $\vol_M$ can be finite or infinite.

A \emph{pointed metric measure space} is a quadruple $(M,d_M,\vol_M,p)$ such that $(M,d_M,\vol_M)$ is a metric measure space and $p \in M$ is a point. More generally, an \emph{$n$-pointed metric measure space} for some $n \in \NN$ is a tuple $(M,d_M,\vol_M,p_1,\ldots,p_n)$ such that $(M,d_M,\vol_M)$ is a metric measure space and $p_1,\ldots,p_n \in M$ are points.
\end{definition}

Let $\MM^0$ denote the space of all metric measure spaces.
Likewise,
let $\mathbb{M}^n$ denote the space of $n$-pointed metric measure spaces. It will be convenient for us to introduce the shorthand notation $\MM = \MM^1$ for the space of pointed metric measure spaces. In addition, whenever convenient  we will omit the explicit mention of the metric $d_M$ and the measure $\vol_M$, and write simply $(M,p)$. They are understood as being implicitly associated to the space $M$. 

We endow the space $\MM^n$ for each $n \in \NN$ with the topology described in \cite[Definition 5]{bowen2015cheeger}. This  is essentially a combination of the pointed Gromov--Hausdorff and the weak-$*$ topologies.

\begin{example}
\label{example:natural examples}
Here are some natural families of pointed metric measure spaces.

\begin{enumerate}
    \item Let $\Graphs \subset \MM$ consist of pointed locally finite graphs equipped with the graph metric and the counting measure on vertices. The  base point is a vertex of the graph.
        \item Let $\Complexes \subset \MM$ consist of pointed locally finite and finite-dimensional simplicial complexes  equipped with the intrinsic metric and the counting measure on vertices. The base point $p$ is a vertex of the complex.
\item Let   $\Hypers \subset \MM$ consist of pointed hyperbolic  manifolds  equipped with the hyperbolic distance and volume measure.
\item More generally, let $\Riemanns \subset \MM$ consist of pointed Riemannian manifolds equipped with the Riemannian distance and volume measure.
\end{enumerate}
These classes satisfy $\Graphs \subset \Complexes$ and $\Hypers \subset \Riemanns$.
\end{example}

\begin{remark}
All graphs we consider are locally finite.
\end{remark}

\begin{definition}
A \emph{random pointed metric measure space} is a Borel probability measure $\mu \in \Prob{\MM^1}$.
\end{definition}

For example, a random pointed graph/simplicial complex/hyperbolic manifold/Riemannian manifold is a Borel probability measure on $\Graphs$/$\Complexes$/$\Hypers$/$\Riemanns$ respectively.    
We now introduce one of the key notions for this work.

\begin{definition}
Given a random pointed metric measure space $\mu \in \Prob{\MM^1}$ we define two positive measures $L_\mu,R_\mu \in \mathrm{Meas}(\MM^2)$ by
\begin{equation}
\dd L_\mu(M,p,q) = \dd \mu(M,p) \dd \vol_M(q), \quad \dd R_\mu(M,p,q) = \dd \mu(M,q) \dd \vol_M(p)
\end{equation}
The random pointed metric measure space $\mu$ is \emph{unimodular} if $L_\mu = R_\mu$.
\end{definition}

Note that the positive measures $L_\mu$ and $R_\mu $ can in general be infinite.
A unimodular random pointed metric measure space will be referred to simply as a \emph{unimodular metric measure} space (or a \emph{unimodular graph}, \emph{manifold}, etc.). Unimodular graphs were first studied in \cite{aldous2007processes}. Vast literature has been dedicated to unimodular random graphs, see e.g. \cite{benjamini2012ergodic, BENJAMINI_LYONS_SCHRAMM_2015,salez2020spectral,lee2023relations,van2024number}. For rich information about unimodular random Riemannian manifolds we refer to \cite{abert2022unimodular}.

\begin{example}
Here are some examples of unimodular  metric measure spaces.
\begin{enumerate}
\item Let $(M,d_M,\vol_M) \in \MM^0$ be a metric measure space with $\vol_M(M) < \infty$. Selecting the point $p \in M$ uniformly at random with respect to $\vol_M$ and normalizing by $\frac{1}{\vol_M(M)}$ defines a unimodular metric measure space.

\item  Let $(M,d_M,\vol_M)$ be a metric measure space. Assume that $G \le \Isom{M}$ is a group of isometries admitting a Borel fundamental domain $F \subset M$ with $\vol_M(F) < \infty$. In this case, selecting a point $p \in F$ uniformly at random with respect to $\vol_M$ and normalizing by $\frac{1}{\vol_M(F)}$ defines a unimodular metric measure space.
\item Any Cayley graph (or more generally, any quasi-transitive graph) is a unimodular random graph.
\item Let $\HH^n$ be the $n$-dimensional hyperbolic space and $G = \Isom{\HH^n}$ so that $G \cong \mathrm{SO}(n,1)$. Fix a basepoint $x_0 \in \HH^n$ and denote $ K =\mathrm{Stab}_G(x_0)$. Let $\nu \in \IRS{G}$ be a discrete invariant random subgroup, namely $\nu$ is a $G$-invariant Borel probability measure on the space $\DSubtf{G}$ of discrete subgroups of $G$.  Let $\mu \in \Prob{\Hypers}$ be the pushforward of $\nu$ via the map $\Gamma \mapsto (\Gamma \backslash \HH^n, \Gamma x_0)$. Then $\nu$ is  a unimodular hyperbolic manifold. See \cite[\S3]{gelander2018invariant} for more details about this perspective.
\item Any weak-$*$ limit of unimodular metric measure spaces is  unimodular. 
\end{enumerate}
(Note that Example (2) has both Examples (1) and (3) as special cases).
\end{example}

When working with unimodular metric measure spaces, the following principle is extremely useful. Roughly speaking, it says that \enquote{the total mass a point sends out is the same as the total mass it receives}. 

\begin{lemma}[Mass transport principle]
\label{lemma:MTP}
A random point metric measure space $\mu$ is unimodular if and only if every Borel function $f : \MM^2 \to \left[0,\infty\right]$ satisfies
\begin{equation}
L_\mu(f) = R_\mu(f).
\end{equation}
\end{lemma}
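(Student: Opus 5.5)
The plan is to derive both directions directly from the definition of unimodularity, $L_\mu = R_\mu$, as an equality of (possibly infinite) positive measures on $\MM^2$. Spelled out, the claimed identity for a Borel $f \ge 0$ is
\begin{equation*}
\int \int_M f(M,p,q)\, \dd\vol_M(q)\, \dd\mu(M,p) = \int \int_M f(M,q,p)\, \dd\vol_M(q)\, \dd\mu(M,p).
\end{equation*}
This is read as the statement that $f$ has the same integral against $L_\mu$ and against $R_\mu$, where $\int f\,\dd L_\mu$ and $\int f\,\dd R_\mu$ denote the integrals against the measures from the definition of unimodularity.

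Before either direction, I would check that both measures are well-defined Borel measures on $\MM^2$. Each is given by a kernel construction: the assignment $(M,p) \mapsto \vol_M$, viewed as a measure on the fiber of $\MM^2$ over $(M,p)$, should be a Borel kernel for the topology of \cite{bowen2015cheeger}. I would argue this by checking that for every Borel $f \ge 0$ on $\MM^2$ the map $(M,p) \mapsto \int_M f(M,p,q)\,\dd\vol_M(q)$ is Borel on $\MM$. The argument goes in three stages:
\begin{itemize}
\item First treat continuous compactly supported $f$, using the weak-$*$ part of the topology on $\MM^n$.
\item Then extend to all bounded Borel $f$ by a monotone class argument.
\item Finally pass to nonnegative $f$ by monotone convergence.
\end{itemize}
The symmetric statement, with the roles of the two points swapped, gives the same for $R_\mu$. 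Together these show that both sides of the displayed identity are well defined and make sense as Borel measures on $\MM^2$.

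For the direction "$\Rightarrow$": if $\mu$ is unimodular then $L_\mu = R_\mu$ as measures. Integrals of nonnegative Borel functions against equal measures coincide, via simple functions and monotone convergence, so $\int f\,\dd L_\mu = \int f\,\dd R_\mu$ for every Borel $f : \MM^2 \to [0,\infty]$. No finiteness is needed, since everything is nonnegative.

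For the direction "$\Leftarrow$": apply the hypothesis to indicator functions $f = 1_A$ of Borel sets $A \subset \MM^2$. This gives $L_\mu(A) = R_\mu(A)$ for all Borel $A$, which is exactly $L_\mu = R_\mu$.

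The only genuine obstacle is the measurability and kernel issue in the second paragraph, namely showing that $(M,p) \mapsto \vol_M$ gives a Borel kernel so that $L_\mu$ and $R_\mu$ are legitimate measures. I would handle it by citing the standard treatment in \cite{khezeli2023unimodular} (or \cite{bowen2015cheeger}) rather than reproving it. Everything else is measure-theoretic bookkeeping.
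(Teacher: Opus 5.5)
Your argument is correct and is exactly the intended one. The paper states this lemma without proof: with unimodularity defined as $L_\mu = R_\mu$, it is just the fact that two positive measures coincide if and only if they integrate every nonnegative Borel function equally. Indicators give one direction, and simple functions with monotone convergence give the other.

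The kernel/measurability check is something the paper takes for granted in defining $L_\mu$ and $R_\mu$. If you do carry it out, start the monotone class argument from bounded continuous functions with a continuous cutoff in $d_M(p,q)$, not from compactly supported ones. The space $\MM^2$ is not locally compact, so compactly supported continuous functions on it are too scarce to generate the Borel functions.
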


Here is a simple but useful application of the mass transport principle, taken from \cite{aldous2007processes} (where it is established for graphs with the same proof).

\begin{lemma}[\enquote{Everything shows up at the root}]
\label{lemma: everything shows up at the root}
Let $\mu \in \Prob{\MM}$ be a unimodular metric measure space. Let $P$ be a measurable property of pointed metric measure spaces, i.e. a property defined on $\MM^1$. Then $\mu$-almost every pointed metric measure space $(M,p)$ has $P$ if and only if $\mu$-almost every pointed metric measure space $(M,p)$ is such that $(M,q)$ has $P$ for $\vol_M$-almost every $q \in M$.
\end{lemma}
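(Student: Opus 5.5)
We prove the lemma with the mass transport principle (Lemma \ref{lemma:MTP}), applied to the indicator of the bad set. Let $B \subset \MM^1$ be the set of pointed spaces failing $P$. We assume $B$ is Borel, so that $f(M,p,q) = \mathbf{1}_{B}(M,q)$ is a Borel function on $\MM^2$. This holds because the projection $\MM^2 \to \MM^1$ forgetting the first point is continuous, and hence Borel.

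The two sides of the principle then compute as follows. On the left,
\begin{equation*}
L_\mu(f) = \int \vol_M\bigl(\{q \in M : (M,q) \in B\}\bigr)\, \dd\mu(M,p),
\end{equation*}
so $L_\mu(f)$ is the expected volume of the bad points as seen from a $\mu$-random root. On the right,
\begin{equation*}
R_\mu(f) = \int \mathbf{1}_B(M,q)\, \vol_M(M)\, \dd\mu(M,q),
\end{equation*}
which vanishes if and only if $\mu$-almost every $(M,q)$ with $\vol_M(M)>0$ lies outside $B$. Since $\vol_M$ is a positive Radon measure on a nonempty space, I will assume $\vol_M(M) > 0$ for every $M$ (full support of $\vol_M$, as in all the examples considered). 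Under this assumption, $R_\mu(f) = 0$ if and only if $\mu(B) = 0$.

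By unimodularity, $L_\mu(f) = R_\mu(f)$. Therefore $\mu(B) = 0$ if and only if, for $\mu$-almost every $(M,p)$, the set $\{q \in M : (M,q) \in B\}$ is $\vol_M$-null. This is exactly the claimed equivalence. Both directions follow at once because the integrands are nonnegative, so each integral vanishes exactly when its integrand vanishes almost everywhere.

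The main obstacle is measure-theoretic rather than conceptual. The first point is the measurability of $q \mapsto \mathbf{1}_B(M,q)$ and of the volume function $(M,p) \mapsto \vol_M\{q : (M,q)\in B\}$. This is handled by Fubini on the space $\MM^2$ together with the Borel structure from \cite{bowen2015cheeger}. The second point is the degenerate case where $\vol_M(M)=0$ or $\vol_M(M)=\infty$. When the volume is infinite, $\mathbf{1}_B \cdot \infty$ still vanishes exactly off $B$, so the argument is unaffected. The zero-volume case is excluded by the positivity assumption above.
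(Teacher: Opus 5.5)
Your proof is correct and is essentially the paper's argument. The paper applies unimodularity to the indicator of $Q=\{(M;p,q) : (M,q) \text{ fails } P\}\subset\MM^2$ and reads off that $R_\mu(Q)=0$ and $L_\mu(Q)=0$ are exactly the two conditions of the lemma. Your extra remark that $R_\mu(Q)=0 \iff \mu(B)=0$ requires $\vol_M(M)>0$ for $\mu$-almost every $M$ is a valid point that the paper's proof uses tacitly.
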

\begin{proof}
Consider the measurable property $Q$ on the space $\MM^2$ of twice pointed metric measure spaces such that $(M;p,q)$ satisfies property $Q$  if and only if $(M,q)$ does not satisfy property $P$.

Note that  $R_\mu(Q)=0$ if and only if $\mu$-almost every $(M,q)\in \MM^1$ has property $P$.
Similarly, we have $L_\mu(Q) = 0$ if and only if $\mu$-almost every $(M,p)\in \MM^1$ is such that $\mathrm{vol}_M$-almost every point  $q \in M$ has property $P$. 
By the unimodularity of $\mu$, these two conditions are equivalent.
\end{proof}


The next lemma shows how to construct a new unimodular measure metric from a given one, by conditioning on a certain property and at the same time restricting the measure to the subspace where it holds.

\begin{lemma}[Conditioning]
\label{lemma:condional unimodular}
Let $\mu \in \mathrm{Prob}(\MM)$ be a unimodular metric measure space. Let $P$ be a measurable property of pointed metric measure spaces. For each 
$\mathcal{X} = (M,d_M,\vol_M,p) \in \MM$ define $T_P(\mathcal{X}) = (M,d_M,1_P\cdot\mathrm{vol}_M,p) \in \MM $. If $\mu(P) > 0$ then
$$\mu_P = \frac{(T_P)_* (1_P \cdot \mu)}{\mu(P)}  $$
is a unimodular metric measure space.
\end{lemma}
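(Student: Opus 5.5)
We have to check that $\mu_P$ is a probability measure on $\MM$ and that $L_{\mu_P} = R_{\mu_P}$. We do this by rewriting both sides in terms of $L_\mu$ and $R_\mu$ and then invoking the mass transport principle (Lemma \ref{lemma:MTP}) for $\mu$.

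First, the probability normalization is immediate. The map $T_P$ is measurable, and $(T_P)_*(1_P\cdot\mu)$ has total mass $\mu(P)$. So $\mu_P$ is a Borel probability measure, and it is supported on pointed spaces $(M,1_P\vol_M,p)$ with $(M,p)\in P$.

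A technical point deserves a remark here. Strictly speaking, $1_P\cdot\vol_M$ denotes the measure $q\mapsto 1_{P}(M,q)\,\dd\vol_M(q)$, and this is the reading we adopt. Because $P$ is measurable on $\MM$, the function $q\mapsto 1_P(M,q)$ is Borel on $M$, so $1_P\vol_M$ is a Radon measure. If one needs $(M, d_M, 1_P\vol_M)$ to be a metric measure space in the paper's sense, this is fine, since positivity of the measure is not required beyond being Radon.

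Now fix a Borel $f:\MM^2\to[0,\infty]$. By definition of $L_{\mu_P}$,
\[
\mu(P)\,L_{\mu_P}(f)=\int 1_P(M,p)\int_M f\bigl(T_P(M),p,q\bigr)\,1_P(M,q)\,\dd\vol_M(q)\,\dd\mu(M,p).
\]
Here $T_P(M)$ denotes $M$ with the modified measure. Define
\[
g(M,p,q)=1_P(M,p)\,1_P(M,q)\,f\bigl(M,1_P\vol_M,p,q\bigr).
\]
We must verify that $g$ is a Borel function on $\MM^2$. This measurability check is the only real obstacle, and it reduces to continuity/measurability of $(M,p,q)\mapsto (M,1_P\vol_M,p,q)$ in the Gromov--Hausdorff--weak-$*$ topology. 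I would handle it by approximating $1_P$ by continuous functions and using that measurable maps are closed under pointwise limits, or simply take it as part of the standard framework.

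With this $g$, the display reads $\mu(P)L_{\mu_P}(f)=L_\mu(g)$. Similarly,
\[
\mu(P)\,R_{\mu_P}(f)=\int 1_P(M,q)\int_M f\bigl(T_P(M),p,q\bigr)\,1_P(M,p)\,\dd\vol_M(p)\,\dd\mu(M,q)=R_\mu(g).
\]
The same $g$ appears because $g$ is symmetric in its use of the indicator factors. By Lemma \ref{lemma:MTP} applied to the unimodular $\mu$, we get $L_\mu(g)=R_\mu(g)$. Dividing by $\mu(P)>0$ gives $L_{\mu_P}(f)=R_{\mu_P}(f)$ for all such $f$. In particular this holds for indicator functions, so $L_{\mu_P}=R_{\mu_P}$ and $\mu_P$ is unimodular.
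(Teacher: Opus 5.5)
Your proof is correct and is exactly the ``direct application of the mass transport principle'' that the paper leaves to the reader. The two identities $\mu(P)\,L_{\mu_P}(f)=L_\mu(g)$ and $\mu(P)\,R_{\mu_P}(f)=R_\mu(g)$, with $g(M,p,q)=1_P(M,p)\,1_P(M,q)\,f(T_P M,p,q)$, are the whole content. The measurability of $(M,p,q)\mapsto(M,1_P\vol_M,p,q)$ that you flag is also assumed without comment in the paper, since it is already needed to make sense of $(T_P)_*$; deferring it leaves you at the same level of rigor as the source.
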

\begin{proof}
The fact that $\mu_P$ is unimodular follows by a direct application of the mass transport principle (Lemma \ref{lemma:MTP}). We leave the verification to the reader.
\end{proof}

A unimodular metric measure space  is \emph{extremal} if it cannot be written as a non-trivial convex combination of other unimodular metric measure spaces. Any unimodular  metric measure space $\mu$ can be represented as a convex combination $\mu = \int \theta \; \mathrm{d} \nu_\mu(\theta)$ where $\nu_\mu$ is a probability measure on $\mathrm{Prob}(\MM^1)$ such that $\nu_\mu$-almost every $\theta \in \mathrm{Prob}(\MM^1)$ is extremal. This is a consequence of Choquet's theorem \cite[Theorem \S3]{phelps2003lectures}.

\subsection{The space of ends}
Let us recall the classical topological notion of the space of ends. Roughly speaking, it is an invariant measuring all the different ways to go to infinity in a topological space. It was introduced by
Freudenthal in \cite{freudenthal1931enden}. It works well for the following class of spaces.

\begin{definition}
\label{def:Freudenthal}
A Hausdorff topological space $M$ is called \emph{Freudenthal} if $M$ is locally compact, locally connected, connected and separable. 
\end{definition}

Let $M$ be a Freudenthal topological space.   For each compact subset $Q \subset M$ let $\mathcal{U}(Q)$ denote the collection of unbounded (i.e. having non-compact closure) connected components of the complement $M \setminus Q$.  
The  sets $\mathcal U(Q)$ form an inverse system, in the sense that whenever a pair of compact sets satisfies $Q_1 \subset Q_2$ inclusion of connected components determines a map $\mathcal U(Q_2) \to \mathcal U(Q_1)$. 
Note that given a compact subset $Q \subset M$ the collection $ \mathcal U(Q)$ is finite \cite[Lemma 1.1]{raymond1960end}.

\begin{definition}
\label{def:space of ends}
The \emph{space of ends} $\Ends{M}$ of the Freudenthal space $M$ is the inverse limit of the inverse system $\mathcal U(Q)$ where $Q$ ranges over all compact subsets of $M$. An \emph{end neighborhood} of a given end $\zeta \in \Ends{M}$ is an element $V \in \mathcal{U}(Q)$ corresponding to $\zeta$ for some compact subset $Q \subset M$.
\end{definition}

Freudenthal proved that the space of ends $\Ends{M}$ of the Freudenthal space $M$ is a compact, totally disconnected, separable Hausdorff topological space with respect to the topology generated by end neighborhoods \cite{freudenthal1931enden}. In fact, it is the maximal compactification of the space $M$ with those properties\footnote{Further literature on the space of ends is \cite{
raymond1960end,
siebenmann1965obstruction,
peschke1990theory,
hughes1996ends,
guilbault2021endsshapesboundariesmanifold,
axon2025end,
bass2025ends}.}.

Assume further that $M$ is a Freudenthal metric measure space, equipped with the measure $\mathrm{vol}_M$. Let $\EndsInf{M}$ denote the subset of all \emph{infinite-volume ends}, namely ends $\zeta\in \Ends{M}$ all of whose end neighborhoods have infinite $\mathrm{vol}_M$-measure.
Let $\EndsFin{M}$ denote the subset of \emph{finite-volume ends}, namely $\EndsFin{M} = \Ends{M}\setminus \EndsInf{M}$. 
For example, a cusp of a hyperbolic manifold is a finite-volume end, and a funnel is an infinite-volume end. Ends of graphs always have infinite volume. 


To make a connection with unimodularity, we quote the following main result from \cite{biringer2017ends}.

\begin{theorem}[Biringer--Raimbault \cite{biringer2017ends}]
\label{thm:Biringer--Raimbault}
Let $\mu$ be a unimodular Freudenthal metric measure space. Then $\mu$-almost every space $M$ has
\begin{enumerate}
    \item either $|\EndsInf{M}| \in \{0,1,2\}$ or $\EndsInf{M}$ is a Cantor space, and
    \item if $\EndsFin{M} \neq \emptyset$ then $\overline{\EndsFin{M}} = \Ends{M}$.
\end{enumerate}
\end{theorem}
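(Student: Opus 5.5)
The plan is to follow the strategy of Biringer--Raimbault, which adapts the classical mass-transport arguments for ends of unimodular graphs (Aldous--Lyons, Benjamini--Lyons--Peres--Schramm) to metric measure spaces. First, by the Choquet decomposition, we reduce to the case where $\mu$ is extremal, hence ergodic. The number $|\EndsInf{M}|$, the property of having isolated infinite-volume ends, and the property $\EndsFin{M}\neq\emptyset$ are all invariant under changing the root. Consequently each of them is $\mu$-almost surely constant.

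For part (1), we suppose $3 \le |\EndsInf{M}| < \infty$ with positive probability. Given such $M$, we choose a compact set $Q$ that separates the infinite-volume ends, taken for instance minimal in a measurable sense, such as the union of balls of radius $R$ about points where the ends split. We then run a mass transport in which each point $p$ sends unit mass, spread uniformly, to the set of ``splitting regions'' of $M$.
\begin{itemize}
\item If the set of splitting regions is finite and of positive volume, every point sends mass $1$, while the points in the region receive infinite mass, since $\vol_M(M)=\infty$ when there are infinite-volume ends.
\item Lemma \ref{lemma:MTP} forbids this, since it would require $1 = \infty$.
\end{itemize}
If there are at least $3$ ends, there are finitely many splitting compacta up to a bounded choice, which gives the contradiction. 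The same device shows that if there are infinitely many infinite-volume ends, then none is isolated. Otherwise, we transport mass from each point to the nearest isolated end's ``cut region'' chosen in a canonical measurable way. Here one uses that the isolated ends together with their canonical cut sets can be chosen Borel measurably, and then repeats the infinite-mass contradiction. A compact, totally disconnected, metrizable space without isolated points is a Cantor space, which gives (1).

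For part (2), we suppose some finite-volume end exists but $\overline{\EndsFin{M}}\neq\Ends{M}$. Then some end neighborhood $V\in\mathcal U(Q)$ contains no finite-volume end. We transport mass from points lying in finite-volume end neighborhoods of small volume (say volume at most $\epsilon$, chosen canonically) to the cut sets $Q$, or conversely. Each point deep in a finite-volume end sends mass $1$ toward the boundary of that end neighborhood. By the finiteness of the end's volume, each boundary point receives finite mass, which on its own yields no contradiction. Instead, we use Lemma \ref{lemma: everything shows up at the root}, which says that a positive-measure property must be seen from almost every root, to argue that finite-volume ends appear ``everywhere'': the set of points within distance $R$ of a finite-volume end neighborhood has positive density. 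A transport from points of $V$ to the nearest such region, which is at arbitrarily large distance deep in $V$, then contradicts unimodularity.

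The main obstacle is measurability: choosing end cut sets, and the ``nearest'' objects, in a Borel way on $\MM^2$ so that Lemma \ref{lemma:MTP} applies. This is where I would rely on the inverse-limit description in Definition \ref{def:space of ends} with compact sets exhausted by balls of rational radii.
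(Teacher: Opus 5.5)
\textbf{There is a genuine gap.} The paper gives no argument of its own here; it cites Biringer--Raimbault and asserts their proof carries over. Your overall strategy is the right kind of argument: produce a canonical bounded set of positive volume into which an infinite-volume region is forced to send mass. But both places where the real work lies are left undone, and in part (2) the set you propose does not work.

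\textbf{Part (2).} In a hyperbolic manifold with a cusp $\zeta$, \emph{every} point $x$ lies in a finite-volume end neighborhood of volume at most $\epsilon$. To see this, take a small cusp neighborhood together with a thin tubular neighborhood of a path from it to $x$. This open connected set has compact boundary and is a component of the complement of that boundary. So ``points within distance $R$ of a finite-volume end neighborhood'' is all of $M$, and the claim that this set is far away from points deep in $V$ fails. Lemma \ref{lemma: everything shows up at the root} also cannot show that a set of positive density is at bounded distance from every point; that is exactly what the transport has to prove.

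What is needed is localization by separators of bounded size. For example, let $S_r=\{y : M\setminus \overline{B}(y,r) \text{ has an unbounded component of volume } \le \epsilon\}$, with $r$ equal to one plus the least integer for which this set is nonempty; then $S_r \supseteq N_1(S_{r-1})$ has positive volume. Suppose $V\in\mathcal U(Q)$, with $Q$ connected, contains no finite-volume end and $\vol_M(M\setminus V)>\epsilon$. For $y\in V$ with $d(y,\partial V)>r$, the set $M\setminus V$ lies in a single component of volume $>\epsilon$, and every other unbounded component lies in $V$ and has infinite volume. Hence $S_r\cap V\subset N_r(\partial V)$ is bounded. Sending mass from each point to its near-nearest points of $S_r$ then pushes the infinite volume of $V$ into a bounded region, contradicting Lemma \ref{lemma:MTP}.

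\textbf{Part (1), isolated ends.} You say ``the same device'' rules out isolated ends when there are infinitely many, but you never explain why the cut region of an isolated end is bounded. This is exactly where the hypothesis must enter. With two ends, both are isolated, and the naive cut region $\{x : \text{some component of } M\setminus\overline B(x,r) \text{ has } \xi \text{ as its only infinite-volume end}\}$ runs down the whole tail of $\xi$ when that tail is tube-like. If your argument worked as written, it would also rule out the two-ended case.

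The fix is to anchor the region using two further ends. Require in addition that at least two \emph{other} components of $M\setminus\overline B(x,r)$ contain infinite-volume ends, and use a single canonical $r$ for all isolated ends. Then points of an end neighborhood $V_\xi$ (containing no other infinite-volume end) at distance $>r$ from $\partial V_\xi$ fail the condition for every isolated end. So the union of these regions meets $V_\xi$ in a bounded set, and the nearest-point transport gives the contradiction. The difficulty here is geometric, not a matter of measurability.

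\textbf{Minor point.} To conclude that $\EndsInf{M}$ is a Cantor space, you also need it to be nonempty and closed in $\Ends{M}$; closedness holds because the finite-volume ends form an open set.
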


Biringer and Raimbault deal with unimodular random manifolds, however their proof is very robust; it applies verbatim to the more general setting of Freudenthal spaces (see also \cite{khezeli2023unimodular}). They are also able to obtain a much more detailed information in the special case of unimodular random surfaces. A similar classification of ends in the stationary case was recently obtained in \cite{Kfir}.

\section{Poisson point processes}
\label{sec:poisson point processes}

In this section we discuss point processes on metric measure spaces. An important and classical special case are Poisson point processes. We also consider Voronoi tessellations and Delaunay triangulations associated to such point processes. Lastly, we discuss point processes over unimodular random (rather than deterministic) metric measure spaces. 

\subsection{The Voronoi tesselation and Delaunay triangulation} \label{sub:voronoi}
Let $(M,d_M,\vol_M)$ be a metric measure space.
Recall that the Dirac measure $\delta_x$ for some $x\in M$ is the probability measure given by $\delta_x(A) = 1_A(x)$ for each Borel subset $A \subset M$.

\begin{definition}
The space $\Points(M)$ consists of all countable sums of Dirac measures on $M$. An element $m \in \Points(M)$ is called a \emph{point configuration}. A point configuration $m$ is called \emph{locally finite} if it has $m(A) < \infty$ for any bounded Borel subset $A \subset M$.
\end{definition}

Let $m \in \Points(M)$ be a point configuration on the metric measure space $M$. For a point $x \in M$  we will write $x \in m$ if $m(\{x\}) > 0$, i.e. the point $x$ is a part of the configuration $m$.

\begin{definition}
The \emph{Voronoi cell} at the point $x \in m$ with respect to the point configuration $m$ is the subset of $M$ given by
\begin{equation}
V_m(x) = \{z \in M \: : \: d_M(z,x) \le d_M(z,y) \quad \forall y \in m, y \neq x\}.
\end{equation}
\end{definition}

The point configuration $m \in \Points(M)$ is \emph{admissible} if it is locally finite and the Voronoi cells $V_m(x)$ at distinct points $x \in m$ are disjoint up to $\vol_M$-null sets.
For each $n \in \NN \cup \{0\}$ denote
$$ \MP^n = \{ (\mathcal X;m) \: : \: \mathcal X  \in \MM^n, \; m \in \Points(M) \text{ is admissible}\}.$$
We denote $\MP = \MP^1$. Namely $\MP$ is the space of metric measure spaces with a choice of a base point as well as an admissible point configuration.

Associated to an admissible point configuration $m \in \Points(M)$ there is a $\mathrm{vol}_M$-measurable map $A_m : M \to m$ sending $\mathrm{vol}_M$-almost every point $p \in M $ 
to the point $A_m(p) \in m$ determined by the condition that $p \in V_m(A_m(p))$. Roughly speaking, we can think of $A_m(p)$ as the index of the Voronoi cell containing the point $p$.

For the following definition, recall that  $\Complexes$ and $\Graphs$ denote respectively the space of locally finite simplicial complexes and graphs; see Example \ref{example:natural examples}.

\begin{definition}
\label{def:Delaunay triangulation}    
\emph{Delaunay triangulation} is a map 
\begin{equation}
\DelanComplex : \MP^0 \to \Complexes^0
\end{equation}
taking a metric measure space equipped with an admissible point configuration $(M;m) \in \MP^0$  to the simplicial complex $C = \DelanComplex(M;m) \in \Complexes^0$ defined as follows:
\begin{itemize}
    \item the set of vertices of $C$ is $\{x \in M \: : \: x \in m\}$, and
    \item the simplices of $C$ are finite collections of points $\{x_1,\ldots,x_n \: : \: x_i \in m\} $ for which there is some point $y \in M$ and some radius $r > 0$ such that $\mathring B_M(y,r) \cap m = \emptyset$ and $\{x_1,\ldots,x_n\} \subset \partial B_M(y,r)$.
\end{itemize} 

The Delaunay triangulation map extends to a pointed map $$\DelanComplex : \MP \to \Complexes$$ 
by taking $(M,p;m) \in \MP$ to the pointed grapg $(\DelanComplex^0(M;m), A_m(p)) \in \Complexes$ where  $A_m(p)$ is regarded as a vertex of the Delaunay triangulation. 

The \emph{Delaunay graph} map 
$$\DelanGraph : \MP \to \Graphs$$
is defined to be the $1$-skeleton of the Delaunay triangulation. Given an edge of a Delaunay graph $e \in \DelanGraph(M,p;m)$ incident at the two vertices $x,y \in m$ we denote its length 
by $l(e) = d_M(x,y)$.
\end{definition}

\begin{remark}
In general, the Delaunay triangulation is not a pure simplicial complex, namely its facets (i.e. maximal simplices) need not all be of the same dimension. 
\end{remark}

\begin{remark}
Let $m$ be an admissible point configuration $m$ on the metric measure space $M$. Note that the pointed map $M\to \Graphs$ given by $p\mapsto \DelanGraph(M,p;m)$ depends on the map $A_m$ and as such is only defined for $\vol_M$-almost every $p \in M$. 
\end{remark}

\begin{remark}
In certain nice geometric situations, such as Euclidean or hyperbolic spaces, the Delaunay graph admits an alternative geometric description. Namely, a pair of vertices $x,y \in m$ span an edge in the Delaunay graph if and only if the intersection of the two corresponding Voronoi cells $V_m(x)$ and $V_m(y)$ contains an open subset of some codimension-one hyperplane. More generally, the Delaunay triangulation can be characterized as being the \enquote{geometric dual} of the  Voronoi tesselation \cite[\S5]{deblois2018delaunay}. 
\end{remark}

\subsection{Point processes on metric measure spaces}
Let $(M,d_M,\vol_M)$ be a metric measure space.
\begin{definition}
A \emph{point process} on the metric measure space $M$
is a Borel probability measure $\theta \in \Prob{\Points(M)}$.
\end{definition}

In other words, a point process is simply a random point configuration. A point process $\theta$ is called \emph{locally finite} if $\theta$-almost every point configuration $m$ is locally finite. Locally finite point processes on $M$ can equivalently be viewed as random discrete subsets of $M$ counted with multiplicities.

A point process $\theta$ on $M$ determines an \emph{intensity measure} $\lambda_\theta$ on $M$ given by 
\begin{equation}
    \lambda_\theta(A) = \EE_\theta m(A)
\end{equation} 
where $A \subset M$ is any Borel subset, $m \in \Points(M)$ is distributed according to $\theta$ and $m(A)$ is to be understood as the number of points of the configuration $m$ belonging to $A$. We will restrict attention to point processes $\theta$ on the metric measure space $M$ of intensity given by $\lambda_\theta = \vol_M$.

\begin{example}
\label{example:poisson point process}
The \emph{Poisson point process} on the metric measure space $M$ with intensity $\vol_M$  is the unique point process $\Pi_M \in \Points(M)$ satisfying the following two conditions:
\begin{enumerate}
\item For every Borel subset $A \subset M$ the random variable $m(A)$ has a Poisson  distribution with parameter $\vol_M(A)$, and 
\item For every $n \in \NN$ and any collection $A_1,\ldots,A_n$ of disjoint Borel subsets of $M$ the random variables $m(A_1),\ldots,m(A_n)$ are independent 
\end{enumerate}
where the point configuration $m \in \Points(M)$ is distributed according to $\Pi_M$.
\end{example}

Indeed, for a locally finite point process $\Pi$  satisfying  $\lambda_\Pi=\vol_M$ and provided the measure $\vol_M$ has no atoms,  the two  properties (1) and (2)  above are in fact equivalent to each other \cite[Chapter 6]{last2017lectures}. In that case, a point configuration can be regarded of as a discrete subset of $M$ (no multiplicities needed).

A point process $\theta$ on $M$ is \emph{admissible} if $\theta$-almost every configuration $m$ is admissible in the sense introduced in \S\ref{sub:voronoi} above.

\subsection{Point processes on unimodular metric measure spaces}

We proceed to consider point processes defined over a random (rather than a deterministic) metric measure space.

\begin{definition}
\label{def:point process over unimodular}
Let $\mu$ be a unimodular metric measure space.
A \emph{point process} on $\mu$ 
is a probability measure $\theta \in \mathrm{Prob}(\MP)$ such that
\begin{itemize}
\item the pushforward of the measure $\theta$ to the space $\mathrm{Prob}(\MM)$ is $\mu$, 
\item the disintegration $\theta_{M,p} \in \mathrm{Prob}(\Points(M))$ of the measure $\theta$ over $\mu$-almost every fiber $(M,p) \in \MM$ is a locally finite point process on $M$ of intensity $\lambda_{\theta_{M,p}} = \mathrm{vol}_M$, and
\item the disintegration $\theta_{M,p} \in \mathrm{Prob}(\Points(M))$ of the measure $\theta$ over the  fibers $(M,p) \in \MM$ satisfies $\theta_{M,p} = \theta_{M,q}$
for $L_\mu$-almost every $(M,p,q) \in \MM^2$.
\end{itemize}
\end{definition}

We emphasize that, as part of the definition, the point process $\theta$ should be independent of the choice of basepoint.
The following is a technical but essential result.

\begin{prop}
\label{prop:there exists a Poisson point process}
Let $\mu$ be a unimodular  metric measure space. There exists a unique Poisson point process $\Pi_\mu$ over $\mu$, i.e. a point process disintegrating as a Poisson point process over $\mu$-almost every fiber $(M,p)$.
\end{prop}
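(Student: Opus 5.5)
The plan is to build $\Pi_\mu$ by fiberwise integration, $\dd\Pi_\mu(M,p;m) = \dd\mu(M,p)\,\dd\Pi_M(m)$, where $\Pi_M$ is the Poisson point process of Example \ref{example:poisson point process}. Since $\Pi_M$ depends only on the metric measure space $M$ and not on the base point $p$, the third condition of Definition \ref{def:point process over unimodular} ($\theta_{M,p} = \theta_{M,q}$) holds automatically. The intensity condition $\lambda_{\Pi_M} = \vol_M$ and local finiteness also hold by construction, since $\vol_M$ is Radon on a proper space and so bounded sets have finite Poisson mean.

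The work lies in making this well defined as a measure on $\MP$. First I would show the assignment $(M,p) \mapsto \Pi_M$ is a Borel map $\MM \to \mathrm{Prob}(\MM^{\mathbb P})$, where the target is the space of pointed spaces with configurations. The cleanest route avoids choosing coordinates on each $M$. Write $\Pi_M$ restricted to the ball $B_M(p,R)$ as a Poissonian number $N \sim \mathrm{Poi}(\vol_M(B_M(p,R)))$ of i.i.d.\ points with law the normalized $\vol_M|_{B_M(p,R)}$. The law of $(M,p;m|_{B(p,R)})$ is then a continuous (or at least Borel) image of the measure $(M,p)\mapsto$ \enquote{$N$ independent samples from $\vol_M$ on the ball}. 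That map is Borel with respect to the Gromov--Hausdorff--weak-$*$ topology of \cite{bowen2015cheeger}, because sampling points from the measure is compatible with the topology; essentially, $\MM^{k+1}$ fibers over $\MM$ with fiber measure $\vol_M^{\otimes k}$, exactly as in the definition of $L_\mu$. Letting $R\to\infty$ and using consistency of Poisson processes under restriction (Kolmogorov extension), I would get the measurable kernel. This also has to be well defined on isometry classes: for an isometry $\phi$ of $(M,p)$, $\phi_*\Pi_M = \Pi_M$, because Poisson processes are determined by intensity and $\phi$ preserves $\vol_M$.

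Next comes admissibility. Provided $\vol_M$ has no atoms and metric spheres $\{z : d(z,x)=d(z,y)\}$ are $\vol_M$-null, the Voronoi cells meet in null sets almost surely. Here I would either note this holds for the Riemannian spaces of interest, or restrict to the natural hypothesis under which the proposition is used. Uniqueness follows because a Poisson point process on each fiber is unique (the characterization recalled after Example \ref{example:poisson point process}). Any two processes disintegrating as Poisson over $\mu$-a.e.\ fiber therefore have equal disintegrations, and hence are equal by uniqueness of disintegration.

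I expect the main obstacle to be measurability and well-definedness on the quotient space $\MM$, where points are isometry classes rather than concrete spaces. The approach I would try first is the ball-sampling description via $\MM^{k+1}$ above, combined with a monotone-class argument.
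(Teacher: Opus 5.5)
Your proposal is correct and follows the paper's own route. You define $\Pi_\mu$ by integrating the fiberwise Poisson processes $\Pi_M$ against $\mu$, with base-point independence giving the compatibility condition. You get uniqueness from the uniqueness of each $\Pi_M$ together with uniqueness of disintegration.

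The measurability, automorphism-invariance and admissibility issues you raise are not addressed in the paper's short proof. Your caveat on admissibility is warranted: for a general metric measure space (e.g.\ a graph with counting measure), the Voronoi cells of a Poisson configuration overlap in a set of positive measure with positive probability.
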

\begin{proof}
On every metric measure space $M \in \MM^0$  there is a unique Poisson point process $\Pi_M \in \mathrm{Prob}(\Points(M))$ of intensity $\lambda_{\Pi_M} = \mathrm{vol}_M$ \cite[Theorem 3.6]{last2017lectures}. The point process $\Pi_M$ is locally finite. Associate to each pointed metric measure space $(M,p) \in \MM$  a measure $\Pi'_{(M,p)} \in\mathrm{Prob}(\MP)$ supported on elements of the form $(M,p;m)$  where the parameters $(M,p)$ are deterministic and the point configuration $m$ is distributed according to $\Pi_M$. Finally, take $\Pi_\mu = \int_{\MM} \Pi'_{(M,p)} \; \mathrm{d}\mu(M,p)$. The uniqueness of the process $\Pi_\mu$ follows from the uniqueness of each  Poisson point process $\Pi_M$.
\end{proof}

Assume that we are given an abstract map $\Phi$ which associates a graph to a point configuration on a unimodular metric measure space. Delaunay graphs are an example of such a map. We examine a certain convergence condition (i.e. $E < \infty$, see Equation (\ref{eq:convergence cond})) which allows to renormalize the resulting random graph making it unimodular. The notations $V_m$ and $A_m$ in the following statement are used in an abstract sense (broader than the Delaunay graph example).

\begin{prop}[Unimodular random graph from a point process]
\label{prop:pushing forward unimodular - general}
Let $\mu$ be a unimodular metric measure space and 
$\theta$ a point process over $\mu$. Let $\Phi^0 : \MP^0 \to \Graphs^0$ and $\Phi : \MP \to \Graphs$ be a pair of $\theta$-measurable maps such that  $\theta$-almost surely  
$$
\Phi(M,p;m) = (\Phi^0(M;m), A_m(p)) $$
where $A_m(p)$ is a vertex in the graph $\Phi^0(M;m)$.  For each $(M,p;m) \in \MP$ denote 
$$ V_m(p) = \{q \in M \: : \: \Phi(M,m;q) = \Phi(M,m;p) \}$$
so that $p \in V_m(p) \subset M$.
If
\begin{equation}
\label{eq:convergence cond}
E = \int_{\MP} \frac{1}{\vol_M(V_m(p))} \; \mathrm{d}\theta (M,m;p) < \infty
\end{equation}
then the probability measure on $\Graphs$ given by
\begin{equation}
\nu = \frac{1}{E} \cdot \Phi_* \, \left( \frac{1}{\vol_M(V_m(p))} \cdot \theta \right)
\end{equation} 
is a unimodular random graph. 
\end{prop}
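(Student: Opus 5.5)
We verify the mass transport principle for $\nu$ on graphs: for every Borel $F : \Graphs^2 \to [0,\infty]$ (a function of a doubly pointed graph $(G,u,v)$) we show
\[
\int \sum_{v \in V(G)} F(G,o,v)\, \dd\nu(G,o) = \int \sum_{u \in V(G)} F(G,u,o)\, \dd\nu(G,o).
\]
This is the graph version of Lemma \ref{lemma:MTP} for $L_\nu = R_\nu$, with the counting measure on vertices. The idea is to lift $F$ to a transport on the underlying metric measure space, spread uniformly over the cells $V_m(\cdot)$, and apply unimodularity of $\mu$ together with basepoint-independence of $\theta$.

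\textbf{Step 1: rewrite the left side.} By definition of $\nu$, the left side equals
\[
\frac{1}{E}\int_{\MP} \frac{1}{\vol_M(V_m(p))} \sum_{v} F(\Phi^0(M;m), A_m(p), v)\,\dd\theta(M,p;m).
\]
Vertices $v$ correspond to cells $V_m(q)$, which partition $M$ up to null sets; this is where admissibility and the fact that $A_m$ is defined a.e.\ enter. So
\[
\sum_v F(\cdot,A_m(p),v)=\int_M \frac{F(\Phi^0(M;m),A_m(p),A_m(q))}{\vol_M(V_m(q))}\,\dd\vol_M(q),
\]
provided each cell has positive finite volume. Positivity holds a.s., since otherwise the integrand of $E$ would be infinite on a positive-measure set. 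Finiteness needs a separate argument, or we may treat infinite cells with the convention $1/\infty=0$, accepting a small gap there. We therefore define
\[
f(M,p,q)=\int_{\Points(M)} \frac{F(\Phi^0(M;m),A_m(p),A_m(q))}{\vol_M(V_m(p))\,\vol_M(V_m(q))}\,\dd\theta_{M,p}(m),
\]
and the left side becomes $\frac1E L_\mu(f)$.

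\textbf{Step 2: symmetry and unimodularity.} The right side unfolds in the same way to $\frac1E\int f'(M,q,p)\,\dd L_\mu(M,q,p)$, where $f'$ is the analogous expression with the roles of the two points swapped. Since $\theta_{M,p}=\theta_{M,q}$ for $L_\mu$-a.e.\ $(M,p,q)$, we get $f'(M,q,p)=f(M,p,q)$, i.e.\ the disintegration may be taken at either point. Then Lemma \ref{lemma:MTP} gives $L_\mu(f)=R_\mu(f)$, which is exactly the unfolded right side. Finally, taking $F\equiv$ an indicator of the root shows that $\nu$ is a probability measure with normalization $E$, consistent with (\ref{eq:convergence cond}).

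\textbf{Main obstacle.} The delicate points are measure-theoretic: measurability of $f$ and of the disintegration, and the fact that the quantities on $\MP$ depend on $p$ only through $A_m(p)$. The latter is needed so that $\Phi_*$ of the reweighted measure is well defined and the substitution in Step 1 is legitimate. The key step is replacing $\theta_{M,q}$ by $\theta_{M,p}$ inside the $L_\mu$-integral, which is exactly what the third condition of Definition \ref{def:point process over unimodular} provides.
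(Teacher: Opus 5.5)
This is essentially the paper's proof. You unfold $L_\nu(F)$ into $\frac{1}{E}L_\mu(f)$ with the same $f$, by trading the vertex sum for an integral over $q$ weighted by $1/\vol_M(V_m(q))$, then invoke $L_\mu(f)=R_\mu(f)$ and fold back into $R_\nu(F)$; your explicit use of $\theta_{M,p}=\theta_{M,q}$ spells out what the paper hides in the notation $\theta_M$. The caveat you flag is shared with the paper: the vertex-sum substitution needs every vertex, not just the root's, to correspond to a cell of positive finite volume, which the paper assumes implicitly and which holds in its Delaunay and thick-graph applications.
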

\begin{proof}
We will show that $\nu$ is a unimodular random graph by verifying that it satisfies the mass transport principle (see Lemma \ref{lemma:MTP}).
Recall that $\Graphs^2$ denotes the space of twice pointed graphs. Let $h : \Graphs^2 \to \left[0,\infty\right]$ be any Borel function.  We have
\begin{align*}
L_\nu(h) &= \int_{\Graphs} \sum_{u \in V(G)} h(G,v,u) 
\; \mathrm{d}\nu(G,v) \\  
& = \frac{1}{E} \int_{\MP} \sum_{u \in \Phi^0(M,m)} \frac{h(\Phi^0(M,m),A_m(p),u)}{\vol_M(V_m(p))}
\; \mathrm{d}\theta(M,m;p) \\
&=\frac {1}{E} \int_\MM  \int_M f(M,p,q) \; \mathrm{d vol}_M(q)\; \mathrm{d}\mu(M,p) = \frac {1}{E} L_\mu(f)
\end{align*}
 where $f : \MM^2 \to \left[0,\infty\right]$ is the Borel function
 given by $$f(M,p,q) = \int_{\Points(M)}  \frac{h(\Phi^0(M,m),A_m(p),A_m(q))}{\mathrm{vol}_M(V_m(p)) \mathrm{vol}_M(V_m(q))}   \; \mathrm{d}\theta_M(m)$$
 and $\theta_M \in \mathrm{Prob}(\Points(M))$ is the disintegration of $\theta$ over $\mu$. The unimodularity of the measure $\mu$ implies that $L_\mu(f) = R_\mu(f)$, namely we may interchange the roles of the two points $p$ and $q$ in the last integral. By retracing the above equalities in the opposite order and with the roles of $p$ and $q$ reversed, we obtain $\frac {1} {E} R_{\mu}(f) = R_\nu(h)$. Altogether we get $L_\nu(h) = R_\nu(h)$. Hence the measure $\nu$ is unimodular.
\end{proof}

Delaunay graphs are a very natural situation where  Proposition \ref{prop:pushing forward unimodular - general}  directly applies. In that special case, $\Phi$ is the map $\DelanGraph$, the subset $V_m(p) \subset M$ is the Voronoi cell containing the point $p$, and $A_m(p) \in  V(\DelanGraph(M,m))$ is the vertex representing that cell. The following is a reformulation of Proposition \ref{prop:pushing forward unimodular - general} in that setting.

\begin{cor}
\label{cor:unimodular graph from unimodular space - delaunay}
Let $\mu$ be a unimodular metric measure space and $\theta$ a point process over $\mu$. If the measures of the Voronoi cells satisfy
\begin{equation}
E = \EE_\theta \frac{1}{\vol_M(V_m(p))} < \infty
\end{equation}
then the probability measure on $\Graphs$ given by
\begin{equation}
\nu = \frac{1}{E} \cdot \DelanGraph_* \, \left(\frac{1}{\vol_M(V_m(p))} \cdot\theta \right)
\end{equation} 
is a unimodular random graph. 
\end{cor}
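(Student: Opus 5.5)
This is a direct specialization of Proposition \ref{prop:pushing forward unimodular - general}, so the plan is to check that the Delaunay graph map satisfies its hypotheses. Set $\Phi^0 = \DelanGraph^0$, the unpointed Delaunay graph map $\MP^0 \to \Graphs^0$, namely the $1$-skeleton of $\DelanComplex$ from Definition \ref{def:Delaunay triangulation}. Set $\Phi = \DelanGraph : \MP \to \Graphs$. By definition, $\DelanGraph(M,p;m) = (\DelanGraph^0(M;m), A_m(p))$, where $A_m(p)$ is the vertex indexing the Voronoi cell containing $p$. So the structural identity required by the proposition holds $\theta$-almost surely, wherever $A_m(p)$ is defined.

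The key step is to identify the abstract set $V_m(p)=\{q : \Phi(M,m;q)=\Phi(M,m;p)\}$ with the Voronoi cell $V_m(A_m(p))$, at least up to a $\vol_M$-null set.

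\begin{itemize}
\item If $q$ lies in the same Voronoi cell as $p$, then $A_m(q)=A_m(p)$. The two pointed graphs then coincide, since they are the same graph with the same root.
\item Conversely, $\Phi(M,m;q)=\Phi(M,m;p)$ as points of $\Graphs$ only says the two pointed graphs are isomorphic. This is the main subtlety: if $\DelanGraph^0(M;m)$ has a nontrivial automorphism moving $A_m(p)$ to another vertex, the abstract set could be a union of several cells.
\end{itemize}

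I would resolve this in one of two ways. The first is to note that the proof of Proposition \ref{prop:pushing forward unimodular - general} only ever uses $V_m(p)$ as the cell of points with $A_m(q)=A_m(p)$. In that proof the sum over $u \in V(G)$ is converted into an integral over $q\in M$ with weight $1/\vol_M(V_m(q))$, and this conversion is exact precisely when $V_m(q)$ is the Voronoi cell of $A_m(q)$. So the argument goes through verbatim with $V_m(p)$ read as that cell. The second is to read the definition of $V_m(p)$ in the proposition as the fiber of $A_m$, which is the intended meaning signalled by the phrase ``in an abstract sense''. Either way, with this reading, admissibility of $m$ makes the Voronoi cells disjoint up to null sets. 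Hence $A_m$ is well defined almost everywhere, and the fiber $A_m^{-1}(A_m(p))$ equals $V_m(A_m(p))$ up to a null set, so its volume is unchanged.

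It remains to check measurability and then conclude. The maps $(M,m)\mapsto \DelanGraph^0(M;m)$ and $(M,p;m)\mapsto A_m(p)$ are $\theta$-measurable, since Delaunay simplices are defined by countably many conditions on empty balls. Given this, the quantity $E$ in the corollary coincides with the integral in \eqref{eq:convergence cond}. The assumption $E<\infty$ is then exactly the convergence hypothesis, and Proposition \ref{prop:pushing forward unimodular - general} yields that $\nu$ is a unimodular random graph.
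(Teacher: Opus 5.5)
Your proposal is correct and follows the paper's route. The paper obtains the corollary simply by specializing Proposition~\ref{prop:pushing forward unimodular - general} to $\Phi=\DelanGraph$, with $A_m(p)$ the vertex of the cell containing $p$ and $V_m(p)$ that Voronoi cell. Your observation that the abstract set $\{q:\Phi(M,m;q)=\Phi(M,m;p)\}$ could be a union of several cells when the Delaunay graph has automorphisms is a fair point the paper glosses over, and reading $V_m(p)$ as the fiber of $A_m$ is exactly the interpretation the proposition's proof actually uses.
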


Morally speaking, we would have liked to use Corollary \ref{cor:unimodular graph from unimodular space - delaunay} in the proof of Theorem \ref{thm:infinitely ended unimodular random surfaces are transient}.  This can be done directly for unimodular hyperbolic manifolds which admit a uniform lower bound on injectivity radius. In the general case however, we  have to resort to the more robust Proposition \ref{prop:pushing forward unimodular - general}, by considering a certain modified variant of Delaunay graphs called thick graphs (i.e. in order to make the expectation of $E$ finite, see \S\ref{sec:transience for manifolds via graphs} below for more details). So, formally speaking, we never have the opportunity to invoke Corollary \ref{cor:unimodular graph from unimodular space - delaunay} directly in this work. It is stated with the hope the reader may find it useful.


\begin{remark}
The assumption that the expectation of the quantity $\vol_M(V_m(p))^{-1}$ is finite is needed to normalize the resulting measure to be a probability. If this expectation is infinite then  one  naturally gets an \enquote{infinite unimodular measure}. We have decided not to consider such measures in this work.
\end{remark} 






\section{Transience for deterministic graphs and manifolds}
\label{sec:transience for deterministic}

In this section we consider deterministic (i.e. non-random) graphs and Riemannian manifolds. We use a combinatorial and an analytic notion of capacity to study transience of such spaces. In the graph case, we need to allow for transition probabilities which may vary according to edge weights.
Following \cite{gurel2017recurrence},  we introduce \emph{good graphs} on manifolds, which allow us to relate the transience problem for manifolds and for graphs --- see Theorem \ref{thm:transience passes to good graphs}.

\subsection{Capacity and transience for graphs}

Let $G$ be a connected locally-finite graph with vertex set $V(G)$ and edge set $E(G)$. Assume that $G$ is equipped with an \emph{edge weight} function $w : E(G) \to \mathbb{R}_{> 0}$. The function $w$ defines  transition probabilities $p_w$ on vertices given by
$$ p_w(u,v) = \frac{w(u,v)}{\sum_{\{u,v'\} \in E(G)} w(u,v')}$$
for each pair of vertices spanning an edge $ \{u,v\} \in E(G)$, and $p_w(u,v) = 0$ otherwise. The graph $G$ is called \emph{$w$-recurrent} or \emph{$w$-transient} respectively if it is recurrent or transient with respect to the random walk on its vertices induced by the transition probabilities $p_w$. In the special case where the edge weights $w$ are constant, we obtain the simple nearest neighborhood random walk, and recover the standard notions of a \emph{recurrent} or \emph{transient} graph.

We present a capacity criterion for $w$-transience. 
Let $C_0(X)$ denote the space of finitely-supported real-valued functions on a set $X$.
Let $ \nabla_w : C_0(V(G)) \to C_0(E(G))$ be the combinatorial gradient taking into account the edge weight function $w$. It is defined so that 
$$|\nabla_w f(e)| = w(e) \cdot |f(u)-f(v)|$$ 
where $e = \{u,v\} \in E(G)$ is an edge of the graph.
We consider the inner product
$$\left<f,g \right>_w = \sum_{e\in E(G)} \frac{f(e)g(e)}{w(e)}$$
on $C_0(E(G))$.
Given a function $f \in C_0(V(G))$ we have
\begin{equation}\label{eq: norm of nabla}
    \left<\nabla_w f, \nabla_w f\right>_w =  \sum_{e=\{u,v\}\in E(G)} w(e) \cdot (f(u) - f(v))^2.
\end{equation}

\begin{definition}
The \emph{capacity} $\capacity[w]{K}$ of a finite subset $K \subset V(G)$ is defined by
\begin{equation}
\capacity[w]{K} =  \inf_f \; \left< \nabla_w f, \nabla_w f\right>_w 
\end{equation}
where the infimum is taken over all finitely-supported functions $f : V(G) \to \left[0,1\right]$ satisfying $f(v) = 1$ for all $v \in K$.
\end{definition}

The following characterization is well-known (see e.g. \cite[Theorem 2.12]{woess2000random}).

\begin{theorem}
\label{theorem:capacity for graphs}
A connected locally finite graph $G$ with edge weights $w$ is $w$-recurrent is and only if $\capacity[w]{K} = 0$ for some (equivalently, all) finite subsets $K \subset V(G)$.
\end{theorem}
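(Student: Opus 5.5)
We prove the standard electrical-network characterization (cf. \cite[Theorem 2.12]{woess2000random}) in three steps: relate capacity to the escape probability, pass to an exhaustion, and check independence of $K$.

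Let $\pi(u) = \sum_{\{u,v'\}\in E(G)} w(u,v')$ be the reversing measure for $p_w$, so that $\pi(u)p_w(u,v) = w(u,v)$ and the walk is reversible. Fix a vertex $o$ and take $K=\{o\}$ first. Exhaust $G$ by finite connected sets $B_1 \subset B_2 \subset \cdots$ containing $o$. On each $B_N$ we solve the Dirichlet problem: $h_N(v)$ is the probability that the walk started at $v$ hits $o$ before leaving $B_N$. This $h_N$ equals $1$ at $o$, vanishes off $B_N$, and is $p_w$-harmonic on $B_N\setminus\{o\}$.

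The first key step is the finite Dirichlet principle. Among functions $f$ with $f(o)=1$ and $f=0$ off $B_N$, the energy \eqref{eq: norm of nabla} is minimized by $h_N$. This follows by expanding $\left<\nabla_w(h_N+g),\nabla_w(h_N+g)\right>_w$ and using summation by parts together with harmonicity; the cross term vanishes because $g(o)=0$. A second summation by parts gives the value of the minimum:
\[
\left<\nabla_w h_N,\nabla_w h_N\right>_w = \pi(o)\,\mathbb{P}_o(\text{walk exits } B_N \text{ before returning to } o).
\]
Truncating to $[0,1]$ does not increase energy, so the infimum in the definition of $\capacity[w]{\{o\}}$ restricted to functions supported in $B_N$ equals this quantity. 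Every finitely supported $f$ lives in some $B_N$, and the exit events decrease in $N$. Letting $N\to\infty$ therefore gives
\[
\capacity[w]{\{o\}} = \pi(o)\,\mathbb{P}_o(\text{no return to } o).
\]
This is zero exactly when the walk is recurrent.

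For a general finite $K$, monotonicity of capacity in $K$ gives $\capacity[w]{\{o\}} \le \capacity[w]{K}$ for any $o\in K$, which handles one direction. For the other direction, suppose $\capacity[w]{\{o\}}=0$. Connect each $v\in K$ to $o$ by a path. Given $f$ with $f(o)=1$ and small energy $\varepsilon$, each edge difference along these finitely many paths is at most $\sqrt{\varepsilon/w(e)}$. Hence $f\ge 1-\delta$ on $K$ with $\delta\to 0$ as $\varepsilon\to 0$. Replacing $f$ by $\min(1, f/(1-\delta))$ yields a competitor equal to $1$ on $K$ with energy at most $\varepsilon/(1-\delta)^2$. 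So $\capacity[w]{K}=0$ as well, which gives the ``some/all'' equivalence.

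The main technical care lies in the identity between energy and escape probability. Local finiteness makes the summations by parts on $B_N$ finite sums, so no convergence issues arise there. The passage to the limit $N\to\infty$ uses only monotone convergence of the exit events.
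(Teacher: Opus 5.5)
Your proof is correct. The paper gives no argument of its own here and simply cites \cite[Theorem 2.12]{woess2000random}; what you wrote is the standard exhaustion and Dirichlet-principle proof, giving $\mathrm{cap}_w(\{o\})=\pi(o)\,\mathbb{P}_o(\text{no return to } o)$, plus a correct path-and-rescaling step that passes from a single vertex to an arbitrary finite $K$. The one point you state quickly is that $\bigcap_N\{\text{exit } B_N \text{ before returning to } o\}$ coincides a.s.\ with $\{\text{no return to } o\}$; this uses both that the exit time from $B_N$ tends to infinity and that the walk a.s.\ cannot remain forever in the finite set $B_N\setminus\{o\}$.
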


The following fact will be important.

\begin{remark}
\label{remark:transient subgraph}
Any connected locally finite graph with edge weights $w$ containing a $w$-transient subgraph is $w$-transient \cite[Corollary 2.15]{woess2000random}. 
\end{remark}

By a $w$-transient subgraph we mean a connected subgraph which is $w$-transient with respect to the restriction of the edge weight function $w$ to it.

\begin{lemma}
\label{lemma:bounded w}
Let $w_1$ and $w_2$ be two edge weight functions on the graph $G$. Assume that $w_1 \approx w_2$. Then the graph $G$ is $w_1$-transient if and only if it is $w_2$-transient.
\end{lemma}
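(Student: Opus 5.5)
We prove this with the capacity criterion of Theorem \ref{theorem:capacity for graphs}. The statement is symmetric in $w_1$ and $w_2$, so it suffices to prove one direction: if $G$ is $w_1$-recurrent then it is $w_2$-recurrent. The hypothesis $w_1 \approx w_2$ provides a constant $C \ge 1$ with
$$C^{-1} w_1(e) \le w_2(e) \le C\, w_1(e) \quad \text{for every edge } e \in E(G).$$

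Fix a finite subset $K \subset V(G)$ and a finitely supported function $f : V(G) \to [0,1]$ with $f|_K \equiv 1$. By Equation (\ref{eq: norm of nabla}), the energies are weighted sums of the same nonnegative quantities $(f(u)-f(v))^2$. Comparing them termwise gives
$$\left<\nabla_{w_2} f, \nabla_{w_2} f\right>_{w_2} = \sum_{e=\{u,v\}} w_2(e)\,(f(u)-f(v))^2 \le C \sum_{e=\{u,v\}} w_1(e)\,(f(u)-f(v))^2 = C \left<\nabla_{w_1} f, \nabla_{w_1} f\right>_{w_1}.$$
Both capacities are infima over the same class of test functions. Taking the infimum over $f$ therefore yields
$$\capacity[w_2]{K} \le C \cdot \capacity[w_1]{K}.$$
The symmetric bound $\capacity[w_1]{K} \le C \cdot \capacity[w_2]{K}$ holds by the same argument, so the two capacities are comparable for every finite $K$.

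It follows that $\capacity[w_1]{K} = 0$ if and only if $\capacity[w_2]{K} = 0$. By Theorem \ref{theorem:capacity for graphs}, applied to each weight function, $G$ is $w_1$-recurrent if and only if it is $w_2$-recurrent. Taking complements, $G$ is $w_1$-transient if and only if it is $w_2$-transient.

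There is no real obstacle here. The only point to check is that Theorem \ref{theorem:capacity for graphs} applies to arbitrary positive edge weights, which is how it was stated. Alternatively, one could compare effective resistances via Rayleigh monotonicity, but the capacity argument is the most direct.
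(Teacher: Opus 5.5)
Your proof is correct and takes the same route as the paper, which just writes ``Immediate'' and cites \cite[Corollary 2.14]{woess2000random}. That corollary rests on the same termwise comparison of the energies $\sum_{e} w(e)\,(f(u)-f(v))^2$ that you carry out, combined with the capacity criterion of Theorem~\ref{theorem:capacity for graphs}.
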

\begin{proof}
Immediate (see \cite[Corollary 2.14]{woess2000random}).
\end{proof}

\subsection{Capacity and transience for manifolds}

Let $M$ be an $n$-dimensional Riemannian manifold with Riemannian volume $\mathrm{vol}_M$. The manifold  $M$ is called \emph{transient} if the Brownian motion starting at some point of $M$  eventually leaves some open subset of $M$ with positive probability (equivalently, the Brownian motion starting at any point of $M$ eventually leaves any precompact subset of $M$ with probability one). Otherwise, the manifold $M$ is called \emph{recurrent}. 
We mention a characterization of recurrence for manifolds in terms of the analytic notion of capacity. See \cite[\S4.3 and \S5]{grigor1999analytic} for this and many other equivalent characterizations.

\begin{definition}
\label{def:capacity of hyperbolic manifold}
The \emph{capacity} $\capacity{K}$ of a compact subset $K \subset M$ is defined by
\begin{equation}
\capacity{K} = \inf_f \, \int_M |\nabla f|^2 \; \dd \vol_M
\end{equation}
where the infimum is taken over all the compactly-supported locally Lipschitz\footnote{A function is \emph{locally Lipschitz} if each point admits a neighborhood on which the restriction of the function is Lipschitz (possibly with a non-uniform constant).} functions $f : M \to \left[0,1\right]$ with $f_{|K} = 1$.
\end{definition}

\begin{theorem}[{\cite[Theorem 5.1]{grigor1999analytic}}]
\label{theorem:capacity for surfaces}
The Riemannian manifold $M$ is recurrent with respect to  Brownian motion if and only if $\capacity{K} = 0$ for some (equivalently, all) compact subsets $K \subset M$.
\end{theorem}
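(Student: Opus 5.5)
The plan is to pass through the \emph{equilibrium potentials} of an exhaustion, following the classical route in \cite{grigor1999analytic}. One caveat first. In dimension $n\ge 2$ a single point has zero capacity on every manifold, so the phrase \enquote{for some compact $K$} has to be read as \enquote{for some compact $K$ with non-empty interior}. Accordingly I will work with compact sets $K$ with non-empty interior and smooth boundary. A general compact set with non-empty interior is sandwiched between a small closed ball inside it and a smooth precompact neighbourhood of it. Since $\capacity{\cdot}$ is monotone under inclusion (the admissible class of test functions only shrinks as $K$ grows), it is enough to treat smooth $K$ and then compare.

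Fix such a $K$ and an exhaustion $K\subset \Omega_1\Subset\Omega_2\Subset\cdots$ of $M$ by precompact domains with smooth boundary. For each $k$ I will do the following.
\begin{enumerate}
\item Let $u_k$ be the solution of the Dirichlet problem $\Delta u_k=0$ in $\Omega_k\setminus K$, with $u_k=1$ on $K$ and $u_k=0$ on $\partial\Omega_k$, extended by $0$ outside $\Omega_k$.
\item By the Dirichlet principle, $u_k$ minimizes the energy among Lipschitz $f$ supported in $\overline{\Omega}_k$ with $f|_K=1$. A truncation and mollification argument shows that
$$\capacity{K}=\lim_k \int_M|\nabla u_k|^2\,\dd\vol_M ,$$
and the right-hand side is non-increasing in $k$.
\item Green's formula gives $\int|\nabla u_k|^2=-\int_\Sigma \partial_\nu u_k$ for any smooth hypersurface $\Sigma\subset\Omega_1\setminus K$ separating $\partial K$ from $\partial\Omega_k$. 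Indeed the flux of a harmonic function is the same across all such surfaces.
\end{enumerate}

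On the probabilistic side, by the maximum principle and the strong Markov property, $u_k(x)=\mathbb{P}_x(\text{the Brownian motion hits } K \text{ before leaving }\Omega_k)$. This increases to $u(x)=\mathbb{P}_x(\text{the Brownian motion ever hits }K)$. The function $u$ is harmonic off $K$, with values in $[0,1]$ and equal to $1$ on $K$. The standard zero--one argument (strong Markov property together with the Harnack inequality) shows that $M$ is recurrent if and only if $u\equiv 1$, and that this does not depend on the choice of $K$ with non-empty interior.

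It then remains to prove that $u\equiv1$ if and only if $\lim_k\int|\nabla u_k|^2=0$.
\begin{itemize}
\item Suppose $u\equiv 1$. The harmonic functions $u_k$ converge monotonically, hence locally uniformly by Harnack, to $1$ on a neighbourhood of $\Sigma$. Interior gradient estimates for harmonic functions then give $\nabla u_k\to 0$ uniformly on $\Sigma$. The flux formula therefore forces $\capacity{K}=0$.
\item Suppose $u\not\equiv 1$. The function $v=1-u$ is a positive harmonic function on $M\setminus K$, vanishing on $\partial K$. The same local convergence, now towards $u$, gives $\int|\nabla u_k|^2 = -\int_\Sigma\partial_\nu u_k\to -\int_\Sigma\partial_\nu u = \int_\Sigma \partial_\nu v$. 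By the Hopf boundary lemma applied to $v$ at $\partial K$, transported to $\Sigma$ by flux invariance, this limit is strictly positive.
\end{itemize}
So $\capacity{K}>0$ exactly when $u\not\equiv1$.

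The step I expect to need the most care is the second item of the plan: identifying $\capacity{K}$, defined with compactly supported locally Lipschitz test functions, with the limit of the energies of the smooth harmonic minimizers $u_k$. This means approximating each admissible test function by one supported in some $\Omega_k$ without increasing its energy (truncating at $0$ and $1$ keeps it admissible), and then invoking the Dirichlet principle on $\Omega_k\setminus K$. The remaining steps are standard elliptic regularity combined with the Markov-process description of hitting probabilities.
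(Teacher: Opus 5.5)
Your argument is correct. It is the standard potential-theoretic proof: equilibrium potentials $u_k$ of $K$ relative to an exhaustion, the Dirichlet principle giving $\mathrm{cap}(K)=\lim_k\int_M|\nabla u_k|^2$, constancy of the flux of $u_k$ through a fixed hypersurface around $K$, and identification of $\lim_k u_k$ with the probability that Brownian motion ever hits $K$. The paper gives no argument of its own here; it simply imports the statement from Grigor'yan's survey. So your proposal supplies what the citation stands in for, rather than offering a competing route.

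Your caveat about non-empty interior is a genuine correction, not pedantry. Points have zero capacity when $n\ge 2$, so the ``for some $K$'' clause as quoted in the paper would make every manifold recurrent. The paper does invoke it in that form: the first direction of the proof of Theorem~\ref{thm:transience passes to good graphs} concludes recurrence from the vanishing capacity of the single vertex $v_0$. This is easily repaired, because recurrence of the graph gives zero capacity for every finite vertex set. Require the discrete test function to equal $1$ on $v_0$ and all of its neighbours. Then the interpolated $F$ is identically $1$ on the closed star of $v_0$, which is a compact neighbourhood of $v_0$ by (G2).

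Two small points about your own write-up.

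\begin{enumerate}
\item The function $v=1-u$ need not be positive on all of $M\setminus K$. It vanishes on bounded components, and possibly on some unbounded ones. So apply the Hopf lemma only on a component where $v>0$. Alternatively, bypass Hopf and boundary regularity altogether: $\lim_k\int_M|\nabla u_k|^2\ge\int_A|\nabla u|^2>0$ for any closed ball $A$ inside a component of $M\setminus K$ on which $u$ is non-constant, by local $C^1$ convergence of $u_k$ to $u$.
\item The step you single out as delicate is actually immediate. Admissible functions are already $[0,1]$-valued and compactly supported, hence supported in some $\Omega_k$, so the Dirichlet principle applies to them directly with no mollification. The step that genuinely needs care is the implication $u\equiv 1\Rightarrow$ recurrence. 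There the strong Markov property must show that the successive returns to $K$, separated by exits from a larger ball, occur at arbitrarily large times; this in particular excludes explosion.
\end{enumerate}
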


\begin{remark}[Hopf dichotomoty \cite{hopf1971ergodic,tsuji1959potential,sullivan1979density}]
For hyperbolic manifolds, the transience of the Brownian motion is equivalent to the ergodicity of the geodesic flow.
\end{remark}

\subsection{Good point configurations}

Inspired by \cite{gurel2017recurrence}, we  introduce a notion of good point configurations on hyperbolic manifolds, and relate the transience of the associated Delaunay graph with that of the underlying manifold.

Given a hyperbolic manifold $M$ and a point $x \in M$ we let $\mathrm{InjRad}_M(x)$ denote the \emph{injectivity radius} of the manifold $M$ at the point $x$. It is defined to be the supremum of all radii $r > 0$ of balls on which the exponential map at $x$ is a diffeomorphism.

\begin{definition}
\label{def:good points} \label{good:points}
Let $M$ be an $n$-dimensional hyperbolic manifold and $m \in \Points(M)$ be an admissible point configuration. Let $G = \DelanGraph(M,m)$ and $T = \DelanComplex(M,m)$ respectively be the associated Delaunay graph and triangulation (the graph $G$ is the $1$-skeleton of the simplicial complex $T$). The point configuration $m$ is called \emph{$\varepsilon$-good} for some $\varepsilon > 0$ if it satisfies the following three properties:
\begin{enumerate}[label = (G\arabic*)]
\item \label{good: length} Any edge $e = \{v,u\} \in E(G)$ incident at some vertex $v \in V(G)$  satisfies $l(e) = d_M(u,v) < \min\{\frac{1}{4} \mathrm{InjRad}_M(v),1\}$.
 \item \label{good: triangulation} The Delaunay triangulation $T$ is indeed a triangulation of the manifold $M$. Namely, the map that sends each simplex of $T$ to the convex hull of its vertices is a homeomorphism between the topological realization of $T$ and the manifold $M$.
 
    \item \label{good: angle} 
    Any pair of distinct edges $e_1,e_2 \in E(G)$ incident  at some vertex $v \in  V(G)$ has angle $\angle_v(e_1,e_2) \ge \varepsilon$.
    
   
\end{enumerate}
We say that the point configuration $m$ is  \emph{good} if it is $\varepsilon$-good for some $\varepsilon>0$. 
\end{definition}



\begin{remark}    \label{remark:properties of good}
The following properties of the Delaunay graph $G = \DelanGraph(M,m)$ and triangulation $T = \DelanComplex(M,m)$ associated to a \emph{good} point configuration $m$ on the manifold $M$ follow directly from Definition \ref{def:good points}.
\begin{itemize}
    \item Every edge of the graph $G$ can be realized as an embedded geodesic arc in  the manifold $M$.
    \item \label{good:simplex}
    Every simplex of the complex $T$ is contained in a ball in $M$ isometric to a ball in the hyperbolic space $\mathbb{H}^n$, and can be realized as the convex hull of its vertices via this identification.
    \item There is some $k \in \NN$ such that the degree  of any vertex  $v \in V(G)$ satisfies $\deg (v) \le k$. In particular, the graph $G$ is locally finite.
    \item The hyperbolic law of sines implies that any two sides $e_1,e_2 \in E(G)$ of a given $2$-simplex in the complex $T$ satisfy $l(e_1) \approx _\varepsilon l(e_2)$. Combining this with the bound on vertex degrees, we deduce that in fact $l(e_1) \approx _\varepsilon l(e_2)$ for every pair of incident edges $e_1,e_2 \in E(G)$.
\end{itemize}
\end{remark}

Here is the main result of the current \S\ref{sec:transience for deterministic}, and one of the main results of the work:

\begin{theorem}
\label{thm:transience passes to good graphs}
Let $M$ be an $n$-dimensional hyperbolic manifold and $m$ be a good point configuration on $M$. Equip
the Delaunay graph $\DelanGraph (M,m)$ with the edge weight function given by $w_\mathrm{dist}(e) = d_M(u,v)$ for each edge $e = \{u,v\} \in E(G)$. 
The manifold $M$ is transient with respect to Brownian motion if and only if the Delaunay graph $\DelanGraph (M,m)$ is $w_{\mathrm{dist}}^{n-2}$-transient.
\end{theorem}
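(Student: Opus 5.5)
By Theorem \ref{theorem:capacity for surfaces} and Theorem \ref{theorem:capacity for graphs}, it suffices to compare capacities. Fix a vertex $v_0 \in V(G)$ and let $K_M$ be the closed star of $v_0$ in the triangulation $T$, a compact set by \ref{good: triangulation}. We aim to prove
$$\capacity{K_M} \approx \capacity[w]{\{v_0\}}, \qquad w = w_{\mathrm{dist}}^{n-2},$$
up to the constant $0$ and with implicit constants depending only on $n$ and $\varepsilon$. This is done by two transfer maps, a discretization of functions on $M$ and an interpolation of functions on $V(G)$, each increasing energy by at most a multiplicative constant. The main geometric input is Remark \ref{remark:properties of good}. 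Every simplex of $T$ sits isometrically inside a hyperbolic ball of radius less than $1$. All edges at a vertex $v$ have length comparable to a local scale $r_v$. Simplices are uniformly non-degenerate by \ref{good: angle}. So each simplex $\sigma$ with vertex $v$ is $(1+O(1))$-bi-Lipschitz to a Euclidean simplex of diameter $\approx r_v$ with angles bounded below. In particular $\vol(\sigma)\approx r_v^n$, and the degree is bounded by $k$.

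For the direction from graph to manifold, take a finitely supported $g: V(G)\to[0,1]$ with $g(v_0)=1$ and extend it affinely on each simplex, in barycentric coordinates via the hyperbolic chart. The extension $f$ is locally Lipschitz, compactly supported, and takes values in $[0,1]$. Its gradient on $\sigma$ satisfies $|\nabla f| \preceq r_v^{-1}\max_{e\subset \sigma}|g(u)-g(u')|$, by non-degeneracy. Hence
$$\int_\sigma |\nabla f|^2 \preceq r_v^{n-2}\sum_{e\subset\sigma}(\nabla g(e))^2 \approx \sum_{e\subset \sigma} l(e)^{n-2}(g(u)-g(u'))^2.$$
Each edge lies in boundedly many simplices, so summing gives $\int|\nabla f|^2\preceq \langle\nabla_w g,\nabla_w g\rangle_w$. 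To get $f|_{K_M}=1$, first impose $g=1$ on the neighbours of $v_0$. This changes the capacity of $\{v_0\}$ only up to a constant, since the capacities of finite sets are simultaneously zero or positive. Therefore $\capacity{K_M}\preceq \capacity[w]{N(v_0)}$.

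For the direction from manifold to graph, take an admissible $f$ for $\capacity{K_M}$ and define $g(v)$ as the average of $f$ over the ball $B_v=B(v, c r_v)$, where $c$ is small enough that the ball lies in the star of $v$. Then $g(v_0)=1$, $g$ takes values in $[0,1]$, and $g$ is finitely supported. For an edge $e=\{u,v\}$, the union $U_e$ of the simplices containing $e$ is a uniformly bi-Lipschitz image of a Euclidean domain of size $r\approx l(e)$ that contains $B_u\cup B_v$. A Poincaré inequality on $U_e$ gives
$$|g(u)-g(v)|^2 \preceq r^{2-n}\int_{U_e}|\nabla f|^2,$$
with a constant uniform by scaling, which is valid because $r<1$ makes the geometry nearly Euclidean. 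Concretely, one can average the fundamental theorem of calculus along geodesic segments from $B_u$ to $B_v$; this is presumably the role of the measures $\theta_{u,v}$ in the glossary. Multiplying by $w(e)=l(e)^{n-2}$ cancels the $r^{2-n}$ factor. The sets $U_e$ have bounded overlap, so $\langle \nabla_w g,\nabla_w g\rangle_w \preceq\int_M|\nabla f|^2$, and hence $\capacity[w]{\{v_0\}}\preceq \capacity{K_M}$.

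Combining the two inequalities, one capacity vanishes exactly when the other does, which proves the theorem. I expect the main obstacle to be making the Poincaré/averaging step honest in all dimensions. This requires checking that the constants are uniform across the scales $r_v$, which can be arbitrarily small in thin parts, and verifying the bounded-overlap claim for the sets $U_e$ and the balls. Both should follow from \ref{good: angle} and \ref{good: length}, but the scaling bookkeeping, producing exactly the exponent $n-2$, is where care is needed.
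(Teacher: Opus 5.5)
Your plan follows the paper's proof. It uses the two capacity criteria, simplex-wise affine interpolation through a uniformly bi-Lipschitz Euclidean model for the graph-to-manifold direction, and, for the converse, ball averages at scale $\approx l(e)$ combined with an averaging estimate and a bounded-overlap count. Your choice of $K_M$ as the closed star of $v_0$, with $g=1$ imposed on the neighbours of $v_0$, is more careful than the written proof. The paper only concludes that the single point $v_0$ has zero capacity. That holds on every manifold of dimension $\ge 2$, so it says nothing about recurrence; the capacity criterion needs a compact set with nonempty interior, and your version supplies one. One further point for the write-up: choose barycentric coordinates that do not depend on a per-simplex chart, for instance linear coordinates in the hyperboloid model, so that the pieces agree on shared faces. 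The paper's per-simplex Klein embeddings assert this agreement without addressing it, and affine interpolation in the Klein model is not preserved by isometries that move the origin.

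\textbf{The step that fails as written.} You claim that the union $U_e$ of the simplices containing $e=\{u,v\}$ contains $B_u\cup B_v$. It does not: near $u$ this set is only a wedge around $e$ (in dimension $2$ it is just the two triangles adjacent to $e$). For $f$ supported in the nonempty open set $B_u\setminus U_e$, the left side of $|g(u)-g(v)|^2\preceq r^{2-n}\int_{U_e}|\nabla f|^2$ is positive while the right side vanishes. Replace $U_e$ by the union of all simplices meeting $e$, which is the set $A_e$ in the proof of Lemma~\ref{lemma:sausage}. This set contains both balls, and its overlap is still bounded by the degree bound.

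\textbf{How the paper handles the step you flag as the main obstacle.} It does exactly the segment-averaging you guess. Jensen's inequality, the fundamental theorem of calculus along $l_{x,y}$ for $x\in B_u$, $y\in B_v$, and Cauchy--Schwarz give $(g(u)-g(v))^2\preceq l(e)^2\,\theta_{u,v}(|\nabla f|^2)$. Lemma~\ref{lemma:density of a random point on a uniform arc} then bounds $\theta_{u,v}$ by $C_n\,\vol_M/\vol_M(W_{u,v})$ on $W_{u,v}=\mathrm{conv}(B_u\cup B_v)$, via a cone-volume estimate that needs $r_u,r_v\approx l(e)$. Since $\vol_M(W_{u,v})\approx l(e)^n$, multiplying by $w(e)=l(e)^{n-2}$ gives $\preceq\int_{W_{u,v}}|\nabla f|^2$, and the sausage lemma bounds the overlap of the sets $W_{u,v}$. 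This concrete route avoids proving a uniform Poincar\'e constant over a family of polyhedral domains, which your formulation would otherwise require.
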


This is just a restatement of Theorem \ref{thm:main:general transient condition} of the introduction. Note that in the two-dimensional case the edge weight function is constant (since the exponent becomes $n-2=0$).
For the case of planar surfaces and so-called good graphs on them (not necessarily Delaunay), this is the main result of \cite{gurel2017recurrence}. Theorem \ref{thm:transience passes to good graphs} is a higher-dimensional generalization. 

\subsection{Some geometric lemmas}

The proof of Theorem \ref{thm:transience passes to good graphs} will be preceded by several lemmas, regarding simplices in Euclidean and hyperbolic geometry, and picking a uniformly random point on a random geodesic segment.

\begin{lemma}
\label{lemma:convex combination on simplex}
Let $\overline{\Sigma}$ be an $n$-dimensional simplex in Euclidean space with vertices $v_0,\ldots,v_n$.  Let $f : \{v_0,\ldots,v_n\} \to \RR$ be any function. Let $\widehat f : \overline{\Sigma} \to \RR$ be the convex interpolation of $f$ to the entire simplex $\overline{\Sigma}$, namely
$$ \widehat{f}(\sum_{i=0}^n t_i v_i) =\sum_{i=0}^n t_i f(v_i) \quad \forall t_i \ge 0, \sum_{i=0}^n t_i =1.$$
Assume that all angles between incident edges of $\overline{\Sigma}$ are lower bounded by some $\overline{\varepsilon} > 0$.
Then\footnote{As is commonplace in Riemannian geometry, we will use $|\nabla f|$ to denote the norm of the gradient $\nabla f$ with respect to the Riemannian metric.}
\begin{equation}
|\nabla \widehat f| \preceq_{\overline \varepsilon, n} \, \frac{\max_{i\neq j} |f(v_i) - f(v_j)|}{\min_{i\neq j}\|v_i-v_j\|}.
\end{equation}
\end{lemma}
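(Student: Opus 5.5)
The function $\widehat f$ is affine on $\overline{\Sigma}$, so $\nabla \widehat f$ is a constant vector $g \in \RR^n$. Write $D = \max_{i\neq j}|f(v_i)-f(v_j)|$ and $\ell = \min_{i\neq j}\|v_i - v_j\|$. The plan is to read off the directional derivatives of $\widehat f$ along the edges at one vertex, and then to bound the inverse of the linear map from the gradient to those derivatives.

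Fix the vertex $v_0$ and set $u_i = (v_i - v_0)/\|v_i-v_0\|$ for $i=1,\ldots,n$. Since $\widehat f$ is affine,
$$\left<g,u_i\right> = \frac{f(v_i)-f(v_0)}{\|v_i-v_0\|},$$
so each of these $n$ numbers has absolute value at most $D/\ell$. The vectors $u_1,\ldots,u_n$ form a basis of $\RR^n$ because the simplex is nondegenerate. Let $U$ be the matrix with rows $u_i$. Then $Ug$ is the vector of directional derivatives, and
$$|g| \le \|U^{-1}\|\cdot \sqrt{n}\, D/\ell .$$
It therefore suffices to bound $\|U^{-1}\|$ by a constant depending only on $\overline\varepsilon$ and $n$, which is equivalent to a lower bound on the smallest singular value of $U$.

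This lower bound is the main obstacle. A lower bound on the pairwise angles $\angle(u_i,u_j)\ge\overline\varepsilon$ alone does not make $n\ge 3$ unit vectors uniformly linearly independent; they could all lie close to a hyperplane. The first thing I would try is to use all the angle bounds, including those at the other vertices, through a compactness argument.

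Normalize by scaling so that $\ell = 1$. Next I would bound the longest edge. In the triangle $v_0v_iv_j$, all three angles are at least $\overline\varepsilon$, so each angle is also at most $\pi-2\overline\varepsilon$. The law of sines then shows that the ratio of any two of its sides is at most $1/\sin\overline\varepsilon$. Since any two edges lie in a common triangle or are linked through at most one intermediate edge, all edge lengths lie in $[1, C(\overline\varepsilon)]$.

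The remaining step is nondegeneracy, which I would prove by contradiction. Up to isometry, the set of simplices with all edge lengths in $[1,C]$ and all edge angles at least $\overline\varepsilon$ is compact once degenerate limits are included, and the smallest singular value of $U$ is continuous on it. So I would take a sequence with smallest singular value tending to $0$ and pass to a limit configuration, which is flat: it lies in an affine hyperplane, has edge lengths at least $1$, and has all angles between incident edges at least $\overline\varepsilon$.

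I expect this is where the argument needs care, since the $2$-dimensional angle conditions may not by themselves forbid a flat limit for $n\ge3$. If they do not, I would fall back on the setting where the lemma is applied. There the simplices come from Delaunay triangulations in which the volume is comparable to $\ell^n$; I would verify this later from the good-configuration properties, or add it as an implicit hypothesis. With that volume bound, the identity
$$|\det U| = \frac{n!\,\vol(\overline\Sigma)}{\prod_i\|v_i-v_0\|}\succeq 1,$$
together with $\|U\|\le\sqrt n$, gives $\|U^{-1}\| \le \|U\|^{n-1}/|\det U| \preceq_{\overline\varepsilon,n} 1$, which finishes the proof.
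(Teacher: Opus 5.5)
Your reduction is the paper's: the paper also takes directional derivatives of $\widehat f$ along the unit edge vectors at $v_0$ and then changes basis. It handles the one nontrivial step by asserting that the angle bound controls the Jacobian of that change of basis. Your surrounding computations are correct: the bound $|g|\le\|U^{-1}\|\sqrt n\,D/\ell$, the law-of-sines comparability of edge lengths, and $\|U^{-1}\|\le\|U\|^{n-1}/|\det U|$.

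The step you left open is a genuine gap, and your suspicion is right. For $n\ge3$ the hypothesis on angles between incident edges does not bound $\|U^{-1}\|$. The lemma as stated is false, so neither you nor the paper can complete this step.

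Take the sliver $v_0=(1,0,0)$, $v_1=(-1,0,0)$, $v_2=(0,1,h)$, $v_3=(0,-1,h)$ with $0<h\le1$. The edges $v_0v_1$ and $v_2v_3$ have length $2$, and the other four have length $\sqrt{2+h^2}$. Each face is isosceles with apex angle $\arccos\frac{h^2}{2+h^2}\in(\pi/3,\pi/2)$, so every angle between incident edges exceeds $\pi/4$. Yet $\vol(\overline\Sigma)=2h/3$. Set $f(v_0)=f(v_1)=0$ and $f(v_2)=f(v_3)=1$. Then $\widehat f(x,y,z)=z/h$, hence $|\nabla\widehat f|=1/h$, while the right-hand side is $1/\sqrt{2+h^2}$.

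The flat limit your compactness argument runs into is the square with its diagonals, all of whose triangles are right isosceles. Adding vertices $(0,0,\tfrac h2)+e_k$ in further coordinate directions, with value $\tfrac12$ there, gives the same failure in every dimension $n\ge3$. For $n=2$ your argument does close, since the angles of a triangle sum to $\pi$.

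Your fallback of assuming $\vol(\overline\Sigma)\succeq\ell^n$ (equivalently, a lower bound on the solid angles at the vertices) does turn your proposal into a complete proof of a corrected lemma. What will not work is recovering that hypothesis ``later from the good-configuration properties''. Conditions (G1)--(G3) constrain only angles between edges, which the sliver satisfies. Its circumsphere (center $(0,0,\tfrac h2)$, radius $\sqrt{1+h^2/4}$) is comparable in size to its shortest edge. So such tetrahedra can occur as Delaunay simplices of a well-spaced, in particular controlled, configuration; this is the classical sliver phenomenon of three-dimensional Delaunay triangulations.

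There are two ways to repair this. One is to build the nondegeneracy assumption into the definition of a good configuration and prove the existence of good point processes anew. The other is to replace the piecewise linear interpolation in the proof of Theorem~\ref{thm:transience passes to good graphs} with an extension that does not see the shape of the simplices, for instance one built from a partition of unity subordinate to balls around the vertices.
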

\begin{proof}
Assume without loss of generality that $f(v_0)$ is a minimum of the function $f$ on the vertices of $\overline{\Sigma}$. For each $i \in \{1,\ldots,n\}$ denote
$$ e_i = \frac{v_i - v_0}{\|v_i-v_0\|}$$
so that $e_1,\ldots,e_n$ is a unit basis for Euclidean space. By the lower-bound on the angle between pairs incident edges of $\overline{\Sigma}$, the Jacobian of the change of basis from the basis $e_i$ to the standard basis is upper bounded by some constant $C'=C'(\overline{\varepsilon},n) > 0$. Note that 
$$ \frac{\partial \widehat{f}}{\partial e_i} = \frac{f(v_i) - f(v_0)}{|v_i-v_0|}.$$
The desired result now follows upon transforming the basis $e_1,\ldots,e_n$ to the standard basis and from the triangle inequality. Note that the gradient $\nabla \widehat f$ is constant on the simplex $\overline{\Sigma}$.
\end{proof}

We will use $\Sigma$ to denote  hyperbolic simplices and $\overline \Sigma$ to denote Euclidean ones.

\begin{lemma}
\label{lemma:transforming a hyperbolic simplex to a Euclidean simplex}
Let $\Sigma$ be an $n$-dimensional simplex in the hyperbolic space $\HH^n$.
Assume that all angles between incident edges of $\Sigma$ are lower bounded by some $\varepsilon > 0$. Then there is a  simplex $\overline{\Sigma}$ in Euclidean space and a $C$-bi-Lipschitz diffeomorphism  $f : \Sigma \to \overline{\Sigma} $. The angles between incident edges in the Euclidean simplex $\overline{\Sigma}$ are lower bounded by some constant $\overline{\varepsilon} > 0$. The constants $\overline{\varepsilon}$ and $C$   depend only on $\varepsilon$ and $n$.
\end{lemma}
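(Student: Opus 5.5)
The plan is to use the projective (Beltrami--Klein) model of $\HH^n$. In that model hyperbolic geodesics are Euclidean straight segments, so a hyperbolic simplex is sent to a Euclidean simplex with the same vertices. The whole difficulty is to make the bi-Lipschitz constant uniform, and for that I need an a priori bound on the diameter of $\Sigma$ in terms of $\varepsilon$ alone. I expect this diameter bound to be the main step.

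\emph{Step 1: bounded edge lengths.} Any three vertices of $\Sigma$ span a $2$-dimensional face, which is a hyperbolic triangle. Its angles are angles between incident edges of $\Sigma$, so by hypothesis each is at least $\varepsilon$, and each is also less than $\pi - 2\varepsilon$ since the angle sum is less than $\pi$. Let the angles be $\alpha,\beta,\gamma$ and let $c$ be the side opposite $\gamma$. The second hyperbolic law of cosines gives
$$\cosh c = \frac{\cos\alpha\cos\beta + \cos\gamma}{\sin\alpha\sin\beta} \le \frac{2}{\sin^2\varepsilon}.$$
Every edge of $\Sigma$ lies in some $2$-face (for $n\ge 2$; the case $n=1$ is trivial), so all edge lengths are bounded by $D = D(\varepsilon)$. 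Since $\Sigma$ is the convex hull of its vertices, it is contained in the closed ball of radius $D$ about any one of its vertices.

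\emph{Step 2: the map.} Put the vertex $v_0$ at the origin of the Klein model, and let $f$ be the Klein chart restricted to $\Sigma$. Because geodesics are straight segments, $f(\Sigma)$ is the Euclidean convex hull $\overline{\Sigma}$ of the images of the vertices, so it is a Euclidean simplex and $f$ maps faces to faces. The Klein chart is a smooth diffeomorphism from $\HH^n$ onto the open unit ball. On the compact hyperbolic ball $B(v_0,D)$, which maps to the Euclidean ball of radius $\tanh D < 1$, the differential and its inverse are bounded in norm by constants depending only on $D$ and $n$. Both the hyperbolic ball and the Euclidean ball are convex, so integrating along segments shows that $f$ is $C$-bi-Lipschitz with $C = C(\varepsilon, n)$.

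\emph{Step 3: angles.} An angle between edges at a vertex $v$ is mapped by the linear map $df_v$. The operator norms of $df_v$ and $df_v^{-1}$ are at most $C$, where the metrics are hyperbolic on the source and Euclidean on the target. A linear map with this kind of bounded distortion sends an angle $\ge \varepsilon$ to an angle $\ge \overline{\varepsilon}(\varepsilon, C)$. To see this, compare the distance on the unit sphere between the two unit directions before and after applying $df_v$. Since the edges of $\overline{\Sigma}$ are exactly the images of the edges of $\Sigma$, this gives the required uniform lower bound $\overline{\varepsilon}$ on the angles of $\overline{\Sigma}$, depending only on $\varepsilon$ and $n$.
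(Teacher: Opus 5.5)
Your proposal is correct and follows the paper's proof. Both arguments bound the diameter of $\Sigma$ by the hyperbolic law of cosines, realize $\Sigma$ in the Klein model, where it becomes a Euclidean simplex, and get a uniform bi-Lipschitz constant from compactness of a ball whose radius depends only on $\varepsilon$. You also make explicit the edge-length bound, the integration along segments that gives the bi-Lipschitz estimate, and the angle-distortion argument via $df_v$; the paper compresses these into ``the desired conclusions follow.''
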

\begin{proof}
By the hyperbolic law of cosines, the hyperbolic diameter of the simplex $\Sigma$ is upper-bounded by some constant $L >0$ which depends only on $\varepsilon$. Let $\mathcal{B}$ be the unit ball in the $n$-dimensional Euclidean space based at zero,  realizing the Klein ball model. We may isometrically (with respect to the hyperbolic geometry) embed the simplex $\Sigma$ in the model $\mathcal B$ so that it contains the point zero. In particular, $\Sigma$ will be contained in the hyperbolic ball of radius $L$ based at zero in the Klein model. This ball coincides with the Euclidean ball of radius $\tanh (L)$ based at zero. Note that the realization of $\Sigma$ in the Klein model is a Euclidean simplex $\overline \Sigma$. Let $f$ be the diffeomorphism realizing this identification (essentially, $f$ is the identity map on the realization of $\Sigma$ in the Klein model, mapping from hyperbolic to Euclidean geometry). By compactness, the operator norm of the differential $Df$ on all points of $\Sigma$ as well as of the inverse differential $Df^{-1}$ on all points of $\overline \Sigma$ is upper-bounded by some constant which depends only on $L$. The desired conclusions follow.
\end{proof}

Here is another geometric lemma about Delaunay triangulations arising from good point configurations. It is to be compared to the \enquote{sausage lemma} from \cite[Lemma 2.3]{gurel2017recurrence}.

\begin{lemma}[$n$-dimensional sausage lemma]
\label{lemma:sausage}
Let $M$ be an $n$-dimensional hyperbolic manifold and $m \in \Points(M)$ be an $\varepsilon$-good point configuration.
There is a constant $\delta= \delta(\varepsilon) < 1$ such that
any  pair of \emph{non-incident} edges $e_1,e_2 \in E(\DelanGraph(M,m))$ of 
the Delaunay graph satisfies $N_{\delta\cdot l(e_1)}(e_1) \cap N_{\delta\cdot l(e_2)}(e_2) = \emptyset$.
\end{lemma}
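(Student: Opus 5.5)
The plan is to reduce everything to a local computation inside a single hyperbolic ball, and then use the triangulation property \ref{good: triangulation} together with the angle bound \ref{good: angle} to separate non-incident edges. Write $e_1 = \{a,b\}$ and $e_2=\{c,d\}$, and suppose $N_{\delta l(e_1)}(e_1)\cap N_{\delta l(e_2)}(e_2)\neq\emptyset$. Then there is a point $z$ at distance $<\delta l(e_1)$ from $e_1$ and $<\delta l(e_2)$ from $e_2$. Without loss of generality $l(e_1)\ge l(e_2)$, so $d(e_1,e_2) < 2\delta l(e_1)$.

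By \ref{good: length}, all edges have length $<\min\{\frac14\mathrm{InjRad},1\}$. The edges of the Delaunay graph near $e_1$ therefore all lie in a ball around $a$ that is isometric to a hyperbolic ball, say of radius $\frac12\mathrm{InjRad}_M(a)$. Since $\delta<1$, the edge $e_2$ lies in this ball too: it passes within $2\delta l(e_1)$ of $e_1$, and its own length is at most $l(e_1)$. So we may work in $\HH^n$, after rescaling by $l(e_1)\le 1$. Here I would use the Klein model, as in Lemma~\ref{lemma:transforming a hyperbolic simplex to a Euclidean simplex}, to compare with Euclidean geometry up to bi-Lipschitz constants depending only on $\varepsilon$. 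By Remark~\ref{remark:properties of good}, the lengths of edges incident to one another are comparable, and degrees are bounded by some $k$.

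The key step is the following. Consider the star of $e_1$ in $T$, namely the union $S$ of all closed simplices of $T$ containing $e_1$. Since $T$ triangulates $M$ by \ref{good: triangulation}, $S$ is a neighbourhood of the open arc $e_1$, and a cone over its link. I claim that $S$ contains the metric neighbourhood $N_{\delta' l(e_1)}(e_1^{\circ})$ of $e_1$, away from the endpoints, for some $\delta'=\delta'(\varepsilon)$. This holds because every simplex has all angles $\ge\varepsilon$ and all edges $\approx_\varepsilon l(e_1)$, so each face opposite $e_1$ in a simplex of $S$ stays at distance $\succeq_\varepsilon l(e_1)$ from $e_1$. Similarly, the star of a vertex $a$ contains a ball $B(a,\delta' l(e_1))$.

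Now $e_2$ is a closed edge of $T$ not sharing a vertex with $e_1$, so it meets neither the open star of $e_1$ nor the open stars of $a$ and $b$. These open stars are the unions of the relative interiors of simplices containing $e_1$, respectively $a$ or $b$. Consequently
\[
d(e_2,e_1)\ \ge\ \min\{\delta' l(e_1),\ \text{dist from } e_1 \text{ to } \partial(\mathrm{star}(a)\cup\mathrm{star}(b)\cup\mathrm{star}(e_1))\}\ \succeq_\varepsilon l(e_1).
\]
Choosing $\delta$ smaller than half of this implicit constant gives a contradiction.

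The main obstacle is the quantitative claim that the open star of an edge or vertex contains a uniform tubular neighbourhood. This has to be argued carefully, including the dihedral-angle lower bounds that follow from the edge-angle bound \ref{good: angle} and the comparable edge lengths. My first attempt would be to bound the height of every simplex from below in terms of $\varepsilon$ and its edge lengths. I would then use the fact that each point near $e_1$ lies in some simplex of $T$, by \ref{good: triangulation}, and that a simplex not containing $a$, $b$ cannot come close because its vertices stay at distance $\succeq l(e_1)$ from $e_1$. Here the closest vertex is a Delaunay neighbour, and comparability of lengths applies to it.
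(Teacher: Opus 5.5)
The step that fails is the quantitative core of your argument. You claim that (G3), together with comparable edge lengths, gives lower bounds on heights and dihedral angles, so that every face of a simplex of $S$ opposite $e_1$ stays at distance $\succeq_\varepsilon l(e_1)$ from $e_1$. But (G3) only bounds angles between two edges at a common vertex. For $n=2$ that suffices, since the angles of a triangle are of this form. For $n\ge 3$ it does not, because of slivers. Take $p=(0,0,0)$, $q=(1,1,0)$, $r=(1,0,h)$, $s=(0,1,h)$. Every angle between two edges of $pqrs$ at a common vertex is close to $\pi/4$ or $\pi/2$, and all six edges have length between $1$ and $\sqrt2$. Yet $\vol(pqrs)=h/3$, and the opposite edges $pq$ and $rs$ are at distance exactly $h$.

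Such slivers occur in Delaunay triangulations of well-spaced point sets. Move the corners of one unit square of $\ZZ^3$ to $p,q,r,s$, and all other lattice points generically by much less than $h$. The circumsphere of $pqrs$ (center $(\tfrac12,\tfrac12,\tfrac h2)$, radius about $1/\sqrt2$) is empty, so $pqrs$ is a Delaunay tetrahedron. Delaunay edges join corners of a common unit cube, so at each vertex they point near distinct nonzero vectors of $\{-1,0,1\}^3$, whose pairwise angles are at least $\arccos\sqrt{2/3}$. Hence (G3) holds with a fixed $\varepsilon$ as $h\to 0$, and the same construction works at small scale in $\HH^3$. The example also defeats your closing heuristic: $r$ and $s$ are at distance about $1/\sqrt2$ from $pq$, but the edge $rs$ joining them passes within $h$ of it.

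Since $pq$ and $rs$ are non-incident Delaunay edges of length $\sqrt2$ at distance $h\to0$, more care cannot close this gap. For $n\ge 3$ no $\delta(\varepsilon)$ works under (G1)--(G3) alone. An extra non-degeneracy hypothesis is needed, e.g.\ a lower bound on dihedral angles, or on the volume of each simplex relative to the $n$-th power of its edge lengths. Under such a hypothesis your per-simplex estimates do go through.

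The paper's proof does not avoid this either. It asserts, by compactness of the normalized stars $A_e$, that $N_{2\delta l(e_1)}(e_1)\subset\mathrm{int}(A_{e_1})$, and also that $e_2$ misses $\mathrm{int}(A_{e_1})$. For $e_1=pq$, $e_2=rs$ and $h<2\sqrt2\,\delta$ these two claims cannot both hold.

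In dimension $2$ your route is sound. Your separation step ($e_2$ misses the open stars of $a$ and $b$) is in fact the correct one. The paper's version, using the interior of the union of the closed stars, is not literally true. If $c$ is a degree-three vertex inside a triangle $abd$, then $e_2=\{c,d\}$ lies in $\mathrm{int}(A_{e_1})$ for $e_1=\{a,b\}$, although the conclusion of the lemma still holds there.
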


In the sausage lemma and its proof, we let $N_r(A)$ denote the $r$-neighborhood of the subset $A$, namely $N_r(A) = \{x \in M \: : \: d_M(x,A) < r\}$ for all  $r > 0$.

\begin{proof}[Proof of Lemma \ref{lemma:sausage}]
Let $G = \DelanGraph(M,m)$ and $T=\DelanComplex(M,m)$ denote  the Delaunay graph and triangulation respectively.
Consider some edge $e \in E(G)$ of the Delaunay graph. Let $A_e$ be the  union of all simplices in $T$ that share a vertex with $e$. 

By condition \ref{good: length}, we may regard  the edge $e$ and the subcomplex  $A_e$ as being isometrically embedded in the hyperbolic space  $\HH^n$ by lifting them to the universal cover. Regard the hyperbolic space  $\HH^n$ in the Klein model, parametrized so that the  midpoint of the edge $e$ coincides with the point $0$ in the model. The subset $A_e$ is contained in the hyperbolic neighborhood $N_1(e)$. Hence $A_e \subset B$ where $B$ is the ball of fixed hyperbolic radius $3$ (equivalently, fixed Euclidean radius $\tanh (3)$) around the point $0$. There is a constant $C > 1$ such that the hyperbolic and Euclidean geometries on the ball $B$ are $C$-bi-Lipschitz equivalent.

\Cref{remark:properties of good} implies that any edge $e' \in E(G)$ belonging to $A_e$ satisfies $l(e') \approx_\varepsilon l(e)$, where $l$ denotes the hyperbolic length of the edge. The above geometric comparison on $B$ shows that the Euclidean length of every edge $e' \in E(G)$ belonging to $A_e$ satisfies 
$
    \label{eq: bilip condition} l_{\mathbb{E}^n}(e)' \approx_\varepsilon l_{\mathbb{E}^n}(e).
$
By the same \Cref{remark:properties of good}, the number of edges and simplices in $A_e$ is bounded, hence the number of possible combinatorial types for the subcomplex $A_e$ is finite. 

Overall, the space of all possible Euclidean triangulations isometric to $A_e$ and considered up to homothety, is compact. Further, the subset $A_e$ contains an open neighborhood of the edge $e$ by condition \ref{good: triangulation}. 
By compactness, there is some $\delta_0 > 0$ such that the Euclidean neighborhood $N_{\delta_0 \cdot l_{\EE^n}(e)}(e)$ is contained in the interior of  $A_e$. 
By the above comparison of the hyperbolic and Eucludean geometries on $B$, there is a suitable constant $\delta = \delta(\varepsilon)> 0$ such that the hyperbolic neighborhood $N_{2\delta \cdot l(e)}(e)$ is contained in the interior of $A_e$ for all possible choices of an edge $e$ associated to some $\varepsilon$-good point configuration.

To conclude the proof, consider a pair of non-adjacent edges $e_1,e_2 \in E(G)$ in the Delaunay graph. Assume without loss of generality that $l(e_1)\ge l(e_2)$.
Note that $e_2$ is disjoint from the interior of $A_{e_1}$.
The above argument applied with respect to the edge $e_1$ shows that the neighborhood $ N_{2\delta \cdot l(e_1)}(e_1)$ is contained in the hyperbolic interior of $A_{e_1}$. Hence $e_2$ is disjoint from this neighborhood. The triangle inequality (applied with respect to hyperbolic geometry) implies $N_{\delta\cdot l(e_1)}(e_1) \cap N_{\delta\cdot l(e_2)}(e_2) = \emptyset$, as desired.
\end{proof}

In the next lemma, we show that picking a uniformly random point along a geodesic arc connecting two uniformly random points in small disjoint balls, determines a random point whose distribution is not overly distorted with respect to  hyperbolic volume (see Figure \ref{fig:balls}).
For a given pair of points $x,y \in \HH^n$ let $l_{x,y}$ denote the geodesic arc from $x$ to $y$ and $\lambda_{x,y}$ denote the Lebesgue measure supported on $l_{x,y}$ so that $\lambda_{x,y}(l_{x,y}) = d_{\HH^n}(x,y)$.

\begin{lemma}
\label{lemma:density of a random point on a uniform arc}
Fix a constant $0<c <\tfrac 1 3$. Let $u,v \in \HH^n$ be a pair of points with $r = d_{\HH^n}(u,v) \le 1$.  Fix a pair of radii $r_u,r_v$ satisfying 
$cr \le r_u,r_v \le \tfrac r 3$. Denote 
$$B_u=B_{\HH^n}(r_u), \; \nu_u = \mathrm{vol}_{\HH^n}(B_u) \quad \text{and} \quad 
B_v = B_{\HH^n}(r_v),\; \nu_v = \mathrm{vol}_{\HH^n}(B_v).$$
Then the probability measure
$$ \theta_{u,v} = \frac{1}{\nu_u \nu_v}  \int_{B_u\times B_v} \frac{\lambda_{x,y}}{\lambda_{x,y}(l_{x,y})} \; \mathrm{d} \mathrm{vol}_{\HH^n}(x,y)$$
is smaller (up to a universal multiplicative constant) than the uniform probability measure on the convex hull $W_{u,v} = \mathrm{conv}(B_u \cup B_v)$. Namely
$$\theta_{u,v} \le  \frac{C_n}{\mathrm{vol}_{\HH^n}(W_{u,v})} \cdot (\mathrm{vol}_{\HH^n})_{|W_{u,v}}$$
for some constant $C_n>0$ that depends only on $n$ and $c$. 
\end{lemma}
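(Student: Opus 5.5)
We will bound the density of $\theta_{u,v}$ against the uniform measure on $W_{u,v}$ by testing it on small balls. Since everything happens at scale $r\le 1$, the first step is to reduce to Euclidean geometry. As in the proof of Lemma \ref{lemma:transforming a hyperbolic simplex to a Euclidean simplex}, place the configuration in the Klein model with $u$ near the origin. All of $W_{u,v}$ then lies in a hyperbolic ball of radius $2$, where the Klein chart is $C$-bi-Lipschitz and distorts volume by a bounded factor. Geodesics are straight segments in this model, and $\lambda_{x,y}$ agrees with Euclidean arclength up to a bounded density. Both sides of the desired inequality are therefore comparable, with constants depending only on $n$, to their Euclidean counterparts. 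In the Euclidean picture, $B_u$ and $B_v$ are replaced by comparable convex sets, which we may sandwich between Euclidean balls of radii $\approx r$. We also rescale by $1/r$, so that $|u-v|\approx 1$ and $r_u,r_v\in[c',1/3']$.

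In this normalized picture, $\mathrm{vol}(W_{u,v})\approx_{n,c} 1$, and each normalized measure $\lambda_{x,y}/\lambda_{x,y}(l_{x,y})$ is uniform on a segment of length $\approx 1$ (because $|x-y|\ge |u-v|-r_u-r_v\ge \frac13|u-v|$). The claim therefore reduces to showing that $\theta_{u,v}(B(z,\rho))\preceq_{n,c}\rho^n$ for every point $z$ and every $\rho>0$. By the Lebesgue differentiation theorem this gives a bounded density, since both measures are absolutely continuous and $\theta_{u,v}$ is supported in $W_{u,v}$.

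For the small-ball estimate, write $\theta_{u,v}(B(z,\rho))$ as an average over $(x,y)$ of the fraction of the segment $[x,y]$ lying in $B(z,\rho)$. This fraction is at most $\preceq\rho$, and it vanishes unless the line through $x,y$ passes within $\rho$ of $z$. Here we split into two cases according to whether $z$ is closer to $u$ or to $v$; by symmetry, take $|z-v|\ge |u-v|/2\gtrsim 1$. Fix $x\in B_u$ and consider the cone of directions from $x$ through $B(z,\rho)$. Since $|z-x|\gtrsim 1$ (recall $r_u\le\frac13|u-v|$), this cone has solid angle $\preceq \rho^{n-1}$. The set of $y\in B_v$ lying in this cone has volume $\preceq \rho^{n-1}$, because $B_v$ has diameter $\preceq 1$ and sits at distance $\gtrsim1$ from $x$. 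Multiplying the fraction bound $\rho$ by this probability $\rho^{n-1}/\nu_v$ and averaging over $x$ gives $\preceq\rho^n$.

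The main obstacle is the bookkeeping in this case split, together with the claim that $|z-x|$ stays bounded below uniformly. The latter uses $r_u\le r/3$, and points $z$ inside $B_u$ itself must be handled by swapping the roles of $x$ and $y$, which is exactly the symmetric case. The hypothesis $cr\le r_u,r_v$ is what keeps $\nu_u,\nu_v\approx r^n$, so that the normalization does not blow up.
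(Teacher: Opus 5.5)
Your proposal follows essentially the paper's argument: bound $\theta_{u,v}(B(z,\rho))$ by $\preceq(\rho/r)^n$, assume by symmetry that $z$ is far from one endpoint, and fix $x$ in the ball at that endpoint. You then multiply the $\preceq \rho/r$ bound on the fraction of $l_{x,y}$ inside $B(z,\rho)$ by the $\preceq \rho^{n-1}r$ volume of the cone from $x$ through $B(z,\rho)$ intersected with $B_v$, normalized by $\nu_u,\nu_v\succeq r^n$; your Klein-model reduction just makes the paper's implicit bi-Lipschitz comparisons explicit. There are two small slips: with the apex at $x\in B_u$ you need $|z-u|\ge|u-v|/2$ (the paper's assumption $d(z,v)\le d(z,u)$), not $|z-v|\ge|u-v|/2$ as written, and the separation $d(z,x)\ge r/2-r_u\ge r/6$ should be derived in hyperbolic distance before passing to the Klein chart, whose fixed distortion need not preserve the constant $\tfrac13$.
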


\begin{figure}
    \includegraphics[]{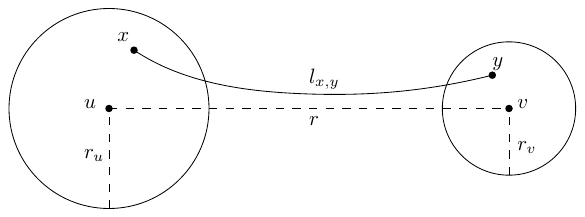}
    \caption{The measure $\theta$ in \Cref{lemma:density of a random point on a uniform arc} is the average of the $1$-dimensional normalized Lebesgue probability measures along the geodesic lines $l_{x,y}$ connecting points in $B_{\HH^n}(u,r_u)$ and $B_{\HH^n}(v,r_v)$.}
    \label{fig:balls}
\end{figure}

\begin{proof}
To begin with, we claim that the hyperbolic volume of the subset $W_{u,v}$ satisfies $$\vol_{\HH^n}(W_{u,v}) \approx_{c,n} r^n.$$ 
Indeed, note that
$B_{\HH^n}(u,cr)\subseteq W_{u,v}\subseteq B_{\HH^n}(u,3r)$. 
The hyperbolic and Euclidean metrics are $C$-bi-Lipschitz equivalent on this domain with a constant $C>1$ uniformly applicable to all $r \le 1$. The claim follows.

We will prove the lemma by providing an upper bound on the Radon-Nykodim derivative of the measure $\theta_{u,v}$ with respect to the hyperbolic volume measure. 
Let $z\in W_{u,v}$ be an arbitrary point and $\delta > 0$ be some sufficiently small radius. The hyperbolic volume of a ball of radius $\delta$ is proportional to $\delta^n$.  To bound the Radon-Nykodim derivative at the point $z$, it will suffice to show that
$$\theta_{u,v}(B_{\mathbb{H}^n}(z,\delta)) \preceq_{c,n}  \left(\tfrac{\delta}{r}\right)^n.$$

We proceed to establish this bound. Assume without loss of generality that $d_{\mathbb{H}^n}(z,v)\le d_{\mathbb{H}^n}(z,u)$.
Consider some point $x\in B_u$. 
All possible points $y\in B_v$ such that the geodesic arc $l_{x,y}$ intersects the ball $B_{\mathbb{H}^n}(z,\delta)$ belong to the hyperbolic cone $C_x(z,\delta)$ with apex $x$ passing through the ball $B_{\mathbb{H}^n}(z,\delta)$. 
Since $z$ is closer to $v$ than to $u$ and since $\delta$ is small, the angle $\alpha$ of the cone at its apex point $x$ satisfies $\alpha \preceq \frac{\delta}{r}$.
The volume of the intersection of the cone with the ball $B_v$ can be estimated by 
\begin{equation}\label{eq: vol of cone}
    \vol_{\HH^n}(C_x(z,\delta)\cap B_v)  \preceq  \alpha^{n-1}r^n \preceq  \delta^{n-1} r.
\end{equation}
To see this, note that $C_x(z,\delta)\cap B_v$ is contained in the intersection of the cone $C_x(z,\delta)$ with the ball $B_{\HH^n}(x,3r)$, and the volume of the latter intersection is at most $ \alpha^{n-1}r^n$ up to a multiplicative constant.

Finally, we require the estimates $\lambda_{x,y}(B_{\HH^n}(z,\delta))\le 2\delta$,  $\lambda_{x,y}(l_{x,y}) \ge r/3$, and  $\nu_u,\nu_v\succeq r^n$.
Putting everything together, we obtain for every point $x \in B_u$ that
$$ \frac {1}{\nu_v}\int_{ y\in B_v} \frac{\lambda_{x,y}( B_{\HH^n}(z,\delta))}{\lambda_{x,y}(l_{x,y})} \mathrm{dvol}_{\HH^n}(y) \preceq \frac{1}{r^n}\cdot \frac{6\delta}{r} \cdot  \delta^{n-1}r \preceq \left(\frac{\delta}{r}\right)^n $$
where the implicit multiplicative constants depend only on $n$ and $c$.
Taking the expectation of this expression with respect to a uniformly random point $x \in B_u$ gives 
$$ \theta_{u,v}(B_{\mathbb{H}^n}(z,\delta))=\frac {1}{\nu_u }\int_{x\in B_u} \left(\frac {1}{\nu_v}\int_{ y\in B_v} \frac{\lambda_{x,y}( B_{\mathbb{H}^n}(z,\delta))}{\lambda_{x,y}(l_{x,y})} \mathrm{dvol}_{\HH^n}(x,y) \right) \preceq_{c,n}  \left(\frac{\delta}{r}\right)^n $$
as desired.
\end{proof}

\subsection{Transience for hyperbolic manifolds and good Delaunay graphs}

We present the 
proof of the main result of the current section, by combining the two combinatorial and analytic capacity criteria for graphs and for manifolds, namely Theorems \ref{theorem:capacity for graphs} and \ref{theorem:capacity for surfaces}.

\begin{proof}[Proof of Theorem \ref{thm:transience passes to good graphs}]
Let $M$ be an $n$-dimensional hyperbolic manifold. Let $m \in \Points(M)$ be an $\varepsilon$-good point configuration on $M$. Let $G=\DelanGraph(M,m)$ and $T = \DelanComplex(M,m)$ respectively be the associated Delaunay graph and triangulation (i.e. the graph $G$ is the $1$-skeleton of the $n$-dimensional simplicial complex $T$). Regard  $G$ as being embedded inside $M$ with edges being geodesic arcs, and $T$ as a triangulation of $M$. 
Equip the graph $G$ with the edge weight function 
$ w : E(G) \to \mathbb{R}_{> 0}$ given by $$ w(e)=w_\textrm{dist}^{n-2}(e) = d_M(u,v)^{n-2} \quad \forall e=\{u,v\} \in E(G).$$

Let $\delta = \delta(\varepsilon) < 1$ be the constant provided by Lemma \ref{lemma:sausage}. For each vertex $v \in V(G) = m$ of the Delaunay graph denote 
$$r_v = \frac{\delta}{3} \cdot \min \{ l(e) : \text{$e\in E(G)$ is incident at $v$} \}$$
and $$B_v = B_{M}(v,r_v), \quad \nu_v=\vol_M(B_v)=\vol_{\HH^n}(B_{\HH^n}(r_v)).$$ 
Similarly, for each simplex $\Sigma$ of the Delaunay complex $T$ denote
$$ r_\Sigma = \min \{ l(e) : \text{the edge $e\in E(G)$ belongs to the simplex $\Sigma$} \}.$$
The geometric properties of $\varepsilon$-good point configurations imply that for each vertex $u \in V(G)$, edge $e = \{u,v\} \in E(G)$ incident at $u$ and simplex $\Sigma \in T$ containing $e$, the three quantities $r_u, r_\Sigma$ and $l(e)$ are all proportional up to  upper and lower universal multiplicative constants (depending on $n$ and $\varepsilon$), see Remark \ref{remark:properties of good}. In that situation, it follows that the quantities $r_v^{n-2}, r_\Sigma^{n-2}$ and $w(e)$ are likewise related. We will be using these observations repeatedly in the course of the proof.

\emph{Arguing in one direction.} Assume that the Delaunay graph $G$ is $w$-recurrent. Hence, according to Theorem \ref{theorem:capacity for graphs} the capacity of every finite subset of $V(G)$ is zero. Namely, there is a finitely-supported function $f : V(G) \to \left[0,1\right]$ with $f(v_0) = 1$ for some vertex $v_0 \in V(G)$ and with $\left< \nabla_w f, \nabla_w f\right>_w $ arbitrary small.

For each hyperbolic simplex $\Sigma \in T$ we apply Lemma \ref{lemma:transforming a hyperbolic simplex to a Euclidean simplex} and get a $C$-bi-Lipschitz diffeomorphism  $\varphi_\Sigma : \Sigma \to \overline{\Sigma}$ where $\overline{\Sigma}$ is a Euclidean simplex, in which all angles between incident edges are lower-bounded by $\overline{\varepsilon}$. Regard the function $f$ as being defined on the vertices of $\Sigma$ by identifying them with points in $V(G)$. Let $\overline f_\Sigma : \overline{\Sigma} \to \RR$ be the convex interpolation of the function $f \circ \varphi^{-1}_\Sigma$ from the vertices of $\overline{\Sigma}$ to its interior, as considered in Lemma \ref{lemma:convex combination on simplex}. Its gradient  is upper-bounded by 
$$ |\nabla  \overline f_\Sigma| \preceq \frac{1}{r_\Sigma}  \max_{\{u,v\}\in \Sigma_{(1)}} |f(u)-f(v)|$$
by Lemma \ref{lemma:convex combination on simplex}. 
Note that the Euclidean volume  of the simplex $ \overline{\Sigma}$ satisfies the upper bound  $\mathrm{vol}_{\EE^n}(\overline \Sigma) \preceq r_\Sigma^n$. Hence 
\begin{align*} 
\int_{\overline{\Sigma}} |\nabla  \overline f_\Sigma |^2 \; \mathrm{dvol}_{\EE^n} &\preceq  \frac{ \mathrm{vol}_{\EE^n}(\overline \Sigma) }{r_\Sigma^2} \cdot \max_{\{u,v\} \in \Sigma_{(1)}} | f(u)-f(v)|^2\\
&\preceq r_\Sigma^{n-2}\cdot \max_{\{u,v\} \in \Sigma_{(1)}} | f(u)-f(v)|^2.
\end{align*}
The pullback function $f_\Sigma = \overline f_\Sigma \circ \varphi_\Sigma: \Sigma \to \left[0,1\right] $ satisfies the similar  bound  
$$\int_\Sigma |\nabla f_\Sigma|^2 \; \mathrm{dvol}_{\HH^n} \preceq  \max_{e=\{u,v\} \in \Sigma_{(1)}} w(e) \cdot| f(u)-f(v)|^2$$
with a different implicit constant  (as the diffeomorphism $\varphi_\Sigma$ is $C$-bi-Lipschitz). 

Let $F : M \to \left[0,1 \right]$ be the function obtained by \enquote{piecing together} the functions $f_\Sigma$ on each simplex $\Sigma$, namely $F_{|\Sigma} = f_\Sigma$ for all simplices $\Sigma \in T$. If a pair of  simplices $\Sigma_1, \Sigma_2$ of $T$ shares a face then the  functions $f_{\Sigma_1}$ and $f_{\Sigma_2}$ coincide on the shared face. Note that the function $F$ is locally Lipschitz. Let $N \in \NN$ be the upper-bound on the number of simplices containing an edge of the Delaunay graph $G$ (such a bound exists because the Delaunay graph has bounded degrees; see Remark \ref{remark:properties of good}). 
Hence
\begin{align*}\|\nabla F\|_2^2&= \sum_{\Sigma\in T}\int_\Sigma|\nabla f_\Sigma|^2 \; \mathrm{d} \vol_M 
    \preceq  \sum_{\Sigma\in T}  \max_{e=\{u,v\} \in \Sigma_{(1)}} w(e)\cdot | f(u)-f(v)|^2\\
    &\le  N \cdot \left<\nabla_w f,\nabla_w f\right>.
    \end{align*}
where $\left<\nabla_w f,\nabla_w f\right>_w$ involves  the weight function $w$; see \Cref{eq: norm of nabla}.
This shows that  $\|\nabla F\|_2$ can be made arbitrary small. Moreover $F(v_0) = 1$ and $F$ is supported on a compact set. Therefore the capacity of the point $v_0 \in M$ is zero, and  the manifold $M$ is recurrent by Theorem \ref{theorem:capacity for surfaces}. 

\emph{Arguing in the converse direction.} 
Assume that the hyperbolic manifold $M$ is recurrent. Hence the capacity of any compact subset $K$ of $M$ is zero.
Fix an arbitrary basepoint $v_0 \in V(G)$ and denote $K = \overline{ B_{v_0}}$ (where $B_{v_0}$ is a ball at the vertex $v_0$, as defined in the start of the proof).
According to Theorem \ref{theorem:capacity for surfaces} there is a compactly-supported function $F : M \to \left[0,1\right]$ with $\|\nabla F\|_2$ arbitrary small and $F_{|K} = 1$. 
Define the function $f : V(G) \to \RR$  by
$$ f(v) = \frac{1}{\nu_v} \int_{B_v} F \; \mathrm{dvol}_{M}$$
for each vertex $v \in V(G)$.
In other words $f(v)$ is the expected value of the function $F$ on the ball $B_v$ around the vertex $v$.

We wish to upper bound the combinatorial gradient $\nabla_w f$  taking into account the edge weight function $w$ in terms of the hyperbolic gradient $\nabla F$. For each edge $e = \{u,v\} \in E(G)$ we have by Jensen's inequality (applied on the third line) and by the fundamental theorem of calculus (applied on the bottom line) that
\begin{align*} ( f(u) - f(v))^2 &= \left( \frac{1}{\nu_v} \int_{B_v} F(x) \; \mathrm{dvol}_{M}(x) - \frac{1}{\nu_u}  \int_{B_u} F(y) \; \mathrm{dvol}_{M}(y)\right)^2 \\
&= \left( \frac{1}{\nu_v \nu_u } \int_{B_v \times B_u} \left( F(x) - F(y) \right) \; \mathrm{dvol}_{M}(x,y)  \right)^2 \\
&\le \frac{1}{ \nu_v \nu_u }
 \int_{B_v \times B_u}
\left(  F(x) -  F(y) \right)^2 \;  \mathrm{dvol}_{M}(x,y)  \\ 
&\le \frac{1}{ \nu_v \nu_u }
\int_{B_v \times B_u}
\left(\int_{l_{x,y}}   |\nabla F| \; \dd \lambda_{x,y} \right)^2 \mathrm{d} \mathrm{vol}_M(x,y)
\end{align*}
where $l_{x,y}$ is the hyperbolic geodesic arc from $x $ to $y$ and  $\lambda_{x,y}$ is the Lebesgue measure on this geodesic arc normalized by arc length (i.e. $\lambda_{x,y}(l_{x,y}) = d_M(x,y)$). 
The triangle inequality and the geometric properties of good point configurations (discussed in the beginning of the proof) ensure that  
$$  d_M(u,v)-r_u-r_v \le \lambda_{x,y}(l_{x,y})  \le d_M(u,v) + r_u + r_v $$
and hence $\lambda_{x,y}(l_{x,y}) \approx d_M(u,v)$.
Plugging this into the above expression and using the Cauchy--Schwarz inequality
gives
\begin{align*}
( f(u) - f(v))^2  &\preceq  \frac{d_M(u,v)}{\nu_v \nu_u} \int_{B_v \times B_u}
 \int_{l_{x,y}} |\nabla F|^2 \; \dd \lambda_{x,y}  \; \mathrm{d} \mathrm{vol}_M(x,y) \\
 &\preceq \frac{d_M(u,v)^2}{\nu_v \nu_u } \int_{B_v \times B_u}
 \int_{l_{x,y}} |\nabla F|^2 \; \dd \frac{\lambda_{x,y}}{\lambda_{x,y}(l_{x,y})}  \; \mathrm{d} \mathrm{vol}_M(x,y).
 \end{align*}
 
 Consider the probability measure $\theta_{u,v}$ introduced in Lemma \ref{lemma:density of a random point on a uniform arc} with respect to the pair of points $v$ and $u$ and the  radii $r_u$ and $r_v$. 
The last integral in the above expression is in effect taken with respect to  $\theta_{u,v}$.  We  rewrite the above integral bound as 
$$ ( f(u) - f(v))^2 \preceq d_M(u,v)^2 \cdot \theta_{u,v}(|\nabla F|^2). $$
While Lemma \ref{lemma:density of a random point on a uniform arc} is stated in the contractible case (i.e. for $\HH^n$), it can be applied in the present situation, since the set $W_{u,v} = \mathrm{conv}(B_v \cup B_u)$ where the integral takes  place  embeds isometrically into $\HH^n$. It implies that 
$$ ( f(u) - f(v))^2 \preceq \frac{ d_M(u,v)^2}{\mathrm{vol}_M(W_{u,v})}  \int_{W_{u,v}} |\nabla F|^2\; \mathrm{d}\mathrm{vol}_M. $$
With respect to the combinatorial gradient $\nabla_w$ taking into account the edge weights $w$, we deduce that 
\begin{align*}
 \left<\nabla_w f,\nabla_w f\right>_w &= \sum_{ e=\{u,v\} \in E(G)} w(e) \cdot  (f(u)-f(v))^2  \\ 
 &\preceq \sum_{e= \{u,v\} \in E(G)} \frac{w(e) \cdot d_M(u,v)^2}{\mathrm{vol}_M(W_{u,v})}  \int_{W_{u,v}} |\nabla F|^2\; \mathrm{d}\mathrm{vol}_M  \\
 &= \sum_{ \{u,v\} \in E(G)} \frac{d_M(u,v)^{n} }{\mathrm{vol}_M(W_{u,v})}  \int_{W_{u,v}} |\nabla F|^2\; \mathrm{d}\mathrm{vol}_M.
 \end{align*}

At this point, we use the fact that the hyperbolic volume of the set $W_{u,v}$ is proportional to  $d_M(u,v)^n$ up to a multiplicative constant from above and below. Namely $d_M(u,v)^n \preceq \mathrm{vol}_M(W_{u,v})$. Further, by the convexity of the hyperbolic metric, the distance of any point in $W_{u,v}$ from the edge $e$ is at most  $\delta \cdot l(e)$.
The $n$-dimensional sausage lemma (i.e. Lemma \ref{lemma:sausage}) and the upper bound on the vertex degree in the Delaunay graph $G$ (Remark \ref{remark:properties of good}) imply that there is an upper bound $N \in \NN$ on the number of sets of the form $W_{u,v}$ which may overlap at any given point of the manifold $M$. 
Altogether, we get
 $$
  \left<\nabla_w f, \nabla_w f\right>_w \preceq  \sum_{ \{u,v\} \in G}\int_{W_{u,v}} |\nabla F|^2\; \mathrm{d}\mathrm{vol}_M
  \le  N \cdot \|\nabla F\|_2^2.
 $$
Hence $\left<\nabla_w f, \nabla_w f\right>_w$ can be made arbitrary small, by choosing a  function $F$ with $\|\nabla F\|_2$  small. Note that $f$ has finite support, $f(v) \in \left[0,1\right]$ for all $v \in V(G)$ and $f(v_0) = 1$. The shows that the capacity of the vertex $v_0$ in the Delaunay graph $G$ is zero. We conclude that the Delaunay graph $G$ is $w$-recurrent by Theorem \ref{theorem:capacity for graphs}.
\end{proof}

\section{Controlled point processes}
\label{sec:controlled point process}

The goal of this section is to construct by hand a good point process over any given deterministic  or  unimodular random hyperbolic manifold. The construction will take into account the injectivity radius function on the manifold.

\subsection{Controlled point configurations}

We develop a geometric tool that will assist in constructing good point configurations on hyperbolic manifolds with varying injectivity radius. The initial definitions apply to any metric space.

\begin{definition}
\label{def:controlled point}
Let $f : M \to \left(0,\infty\right) $ be a continuous function on a metric space $M$. 
A point configuration $m \in \Points(M)$ on the space $M$ is called  \emph{$f$-controlled} if
\begin{enumerate}[label =(\arabic*)]
    \item for every point $ x \in m$ of the configuration, the ball $B_M(x,f(x))$ contains no other point of $m$, and
    \item for every point $y \in M$ of the metric space, the ball $B_M(y,2 f(y))$ contains some  point of $m$.
\end{enumerate}
\end{definition}

An $f$-controlled point configuration is a generalization of an $\varepsilon$-net, in such a way that the density of the net is allowed to vary continuously over the space. Specifically, an $\varepsilon$-separated $2\varepsilon$-net is the same thing as an $f$-controlled point configuration with $f = \varepsilon$ constant. 



\begin{lemma}
\label{lemma:lower and upper bound on edges in controlled Delanauy graph}
Fix a constant $0 < \lambda < \tfrac{1}{2}$. Let $f : M \to \left(0,\infty\right)$ be a $\lambda$-Lipschitz  function on the  metric space $M$ and $m \in \Points(M)$ be an $f$-controlled admissible point configuration. Let $G = \DelanGraph(M,m)$ be the  associated Delaunay graph. Let  $e \in E(G)$ be an edge incident at some vertex $v \in V(G)$. Then the length $l(e)$ of the edge $e$ satisfies 
$$\frac{1}{2} f(v) \le l(e) \le \frac{4}{1-2\lambda}f(v).$$
\end{lemma}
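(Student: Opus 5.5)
Let $e=\{v,u\}$ with $u \in m$. The lower bound is the easy half and I would dispose of it first. Since $u \neq v$ are both points of $m$, condition (1) of Definition \ref{def:controlled point} at $v$ gives $d_M(u,v) \ge f(v)$, so in fact $l(e) \ge f(v) \ge \tfrac12 f(v)$. If one prefers a symmetric argument, condition (1) at $u$ gives $d_M(u,v)\ge f(u)$ as well.

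For the upper bound I would use the definition of a Delaunay simplex. Since $\{v,u\}$ is an edge of $\DelanComplex(M,m)$, there are a point $y \in M$ and a radius $r>0$ such that the open ball $\mathring B_M(y,r)$ contains no point of $m$ and $u,v \in \partial B_M(y,r)$, so $d_M(y,u)=d_M(y,v)=r$. Condition (2) of Definition \ref{def:controlled point} says that $B_M(y,2f(y))$ contains a point of $m$. Since the open ball of radius $r$ around $y$ is empty of $m$, this forces $r \le 2f(y)$; I would take the balls to be closed or open consistently, which affects nothing beyond a non-strict inequality. Combining with the $\lambda$-Lipschitz property,
\[
f(y) \le f(v) + \lambda\, d_M(y,v) = f(v) + \lambda r,
\]
and therefore $r \le 2f(v) + 2\lambda r$, that is, $r \le \frac{2}{1-2\lambda} f(v)$. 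This step uses $\lambda<\tfrac12$. The triangle inequality then gives $l(e) = d_M(u,v) \le d_M(u,y)+d_M(y,v) = 2r \le \frac{4}{1-2\lambda} f(v)$, which is the claimed bound.

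The only delicate point is making sure the empty-ball characterization of Delaunay edges is exactly the one in Definition \ref{def:Delaunay triangulation}. It is: edges are $1$-simplices, witnessed by a point $y$ and radius $r$ with $\mathring B_M(y,r)\cap m=\emptyset$ and both endpoints on $\partial B_M(y,r)$. We should read $\partial B_M(y,r)$ as the sphere $\{z : d_M(y,z)=r\}$. If one only knows $d_M(y,u),d_M(y,v)\le r$, the same chain of inequalities still works. No admissibility or geometric hypothesis beyond the metric is needed.
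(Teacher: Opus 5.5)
Your proof is correct and follows essentially the same route as the paper: the upper bound is identical (empty ball through $u,v$, condition (2) forcing $r \le 2f(y)$, the $\lambda$-Lipschitz comparison with $f(v)$, then $l(e)\le 2r$). For the lower bound you apply condition (1) directly to the other endpoint $u\in m$, which is cleaner than the paper's appeal to Voronoi cells and in fact gives the sharper bound $l(e)\ge f(v)$.
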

\begin{proof}
The lower bound of $\frac{1}{2} f(v)$  on the edge length $l(e)$ follows directly from the definition of Voronoi cells and of the Delaunay graph, combined with the fact that $v $ is the only point of the configuration $m$ inside the ball $B_M(v,f(v))$. 

To get the upper bound, assume that $e = \{v,u\}$ for some other vertex $u \in V(G) = m$. By the definition of the Delaunay graph there exists a point $p \in M$ such that $r=d_M(p,u)=d_M(p,v)$ and $\mathring B_M(p,r)\cap m = \emptyset$.
By the triangle inequality  $r\ge d_M(u,v)/2=l(e)/2$. 
As the configuration $m$ is $f$-controlled we have $B_M(p,2f(p)) \cap m\neq \emptyset$. Therefore $r\le 2 f(p)$.
The fact that $f$ is $\lambda$-Lipschitz gives
$$ r\le 2 f(p)\le  2(f(v) + \lambda r).$$
Rearranging the above inequalities,  we deduce the desired upper bound
$$l(e)\le 2r\le \frac{4 f(v)}{1 -2\lambda}.\qedhere$$
\end{proof}

\subsection{Controlled point processes} 
A point process $\theta$ on a metric measure space $M$ is called \emph{$f$-controlled} if $\theta$-almost every point configuration $m \in \Points(M)$ is $f$-controlled.
In the current subsection, we   explicitly construct $f$-controlled point processes.
The idea of the construction is to apply an iterative \emph{metric thinning} procedure.

\begin{definition}[The thinning map]
\label{def:thinningMap}
Let $(M,d_M,\mathrm{vol}_M)$ be a metric measure space and $f : M \to (0,\infty)$ be a continuous function. \begin{enumerate}
    \item A point $x \in M$ is \emph{$(f,m)$-separated} with respect to the point configuration $m \in \Points(M)$ if 
$$d_M(x,y) \ge \max\{ f(x), f(y)\}$$ holds true for all points $y \in m $ distinct from $x$.
\item A point configuration $m \in \Points(M)$ is \emph{$f$-separated} if every point $x \in m$ is $(f,m)$-separated.
\end{enumerate} 
The \emph{thinning map} is the Borel map
$$ \mathcal{T}_M : \Points(M) \times \Points(M) \to \Points(M) $$
associating to a pair of point configurations $n,m \in \Points(M) $ a new point configuration $\mathcal{T}_M(n,m) \in \Points(M)$ consisting of all the $(f,n)$-separated points of $n$ together with all the $(f,n+m)$-separated points of $m$.
\end{definition}

The thinning map $\mathcal{T}_M$  satisfies the following properties:
\begin{itemize}
    \item The point configuration $\mathcal{T}_M(n,m)$ is $f$-separated for every $n,m \in \Points(M)$.
    \item If the point configuration $n \in \Points(M)$ is $f$-separated then $n \subset \mathcal{T}_M(n,m)$ for any $m \in \Points(M)$.
    \item The thinning map $\mathcal{T}_M$ is not symmetric in general.
\end{itemize}



\begin{lemma}
\label{lemma:1-Lipshitz is slowly varying}
Fix the constants $0 < \lambda < \frac{1}{2}$ and $\delta > 0$. Let $M$ be a metric measure space and $f : M \to (0,\infty)$ be a $\lambda$-Lipschitz function. 
Let   $n,m \in \Points(M) $ be a pair of point configurations and $x \in M$ be a point. If $n$ has no points in $B_M(x,2f(x) +\delta)$ and $m$ contains a \emph{unique} point $y \in B(x,2 f(x)+\delta)$ which moreover satisfies $y \in B_M(x,\lambda \delta)$ then $y\in \mathcal{T}_M(n,m)$.  
\end{lemma}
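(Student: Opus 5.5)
The plan is to unwind Definition \ref{def:thinningMap}. Since $y$ is a point of $m$, it belongs to $\mathcal{T}_M(n,m)$ exactly when $y$ is $(f,n+m)$-separated. That is, we must show
$$d_M(y,z)\ge \max\{f(y),f(z)\}$$
for every point $z\in n+m$ distinct from $y$. The hypotheses say that $n$ has no points in $B_M(x,2f(x)+\delta)$, and that $y$ is the unique point of $m$ in this ball. So every such $z$ satisfies $d_M(x,z)\ge 2f(x)+\delta$. I will also use $d_M(x,y)<\lambda\delta$ and the $\lambda$-Lipschitz property of $f$, and then verify the two inequalities separately using only the triangle inequality.

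\emph{The bound $d_M(y,z)\ge f(y)$.} By the triangle inequality,
$$d_M(y,z)\ge d_M(x,z)-d_M(x,y)\ge 2f(x)+\delta-\lambda\delta\ge 2f(x)+\tfrac{\delta}{2},$$
where the last step uses $\lambda<\tfrac12$. On the other hand, the Lipschitz property gives
$$f(y)\le f(x)+\lambda d_M(x,y)\le f(x)+\lambda^2\delta\le f(x)+\tfrac{\delta}{4}.$$
Comparing the two displays gives $d_M(y,z)\ge f(y)$.

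\emph{The bound $d_M(y,z)\ge f(z)$.} Write $D=d_M(x,z)$. The Lipschitz property gives $f(z)\le f(x)+\lambda D$, while the triangle inequality gives $d_M(y,z)\ge D-\lambda\delta$. It therefore suffices to show $(1-\lambda)D\ge f(x)+\lambda\delta$. Using $D\ge 2f(x)+\delta$, we get
$$(1-\lambda)D\ge 2(1-\lambda)f(x)+(1-\lambda)\delta\ge f(x)+\lambda\delta.$$
The last inequality holds because $\lambda<\tfrac12$ implies both $2(1-\lambda)\ge1$ and $1-\lambda\ge\lambda$.

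The only step requiring any care is this second inequality. There $f(z)$ may be large, since $z$ can be far away, and it must be absorbed using the Lipschitz bound; this is exactly where the hypothesis $\lambda<\tfrac12$ enters. Together the two bounds show that $y$ is $(f,n+m)$-separated, hence $y\in\mathcal{T}_M(n,m)$.
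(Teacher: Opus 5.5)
Your proof is correct and follows the paper's argument: unwind $(f,n+m)$-separation of $y$, then combine the triangle inequality with the $\lambda$-Lipschitz bound on $f$, using $\lambda<\tfrac12$. The only cosmetic difference is that the paper first proves the margins $d_M(x,z)\ge f(x)+\delta$ and $d_M(x,z)\ge f(z)+\tfrac{\delta}{2}$ at the centre $x$ and then transfers them to $y$, whereas you estimate directly at $y$.
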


\begin{proof} 
Consider any point $z$ lying outside the ball $B_M(x,2f(x)+\delta)$. Hence the point $z$  satisfies $d_M(x,z) \ge 2f(x) + \delta \ge f(x) + \delta$. Furthermore, as the function $f$ is $\lambda$-Lipschitz, we have
$$d_M(x,z) \ge 2f(x) + \delta \ge 2(f(z) - \lambda d_M(x,z)) + \delta.$$
Rearranging the above inequality we obtain
$$ d_M(x,z) \ge \frac{2}{1+2\lambda} f(z) + \frac{1}{1+2\lambda} \delta \ge f(z) + \frac{1}{2} \delta.$$

Let $y$ be any point inside  the ball $B_M(x,\lambda \delta)$. Since the distance function $d_M(\cdot,z)$ is $1$-Lipschitz and the function $f$ is $\lambda$-Lipschitz, and since $\lambda<\frac{1}{2}$, we deduce from the above inequalities involving the point $x$ the following analogous inequality involving the point $y$:
$$ d_M(y,z) \ge \max \{f(y),f(z)\}.$$

Finally, consider the given pair of point configurations  $n,m \in \Points(M)$. If 
 $n$ has no points in $B(x,2f(x)+\delta)$ and $m$ contains a unique point $y\in B(x,2 f(x) + \delta )$ satisfying $y \in B(x,\lambda \delta )$ then  $d_M(y,z) \ge \max\{f(z),f(y)\}$ for any other point $z \in n + m$.  Hence $z\in \mathcal{T}_M(n,m)$ by the definition of the thinning map.
\end{proof}

\begin{prop}
\label{prop:there exists controlled point processes on mms}
Fix a constant $0 < \lambda < \frac{1}{2}$. Let $M$ be a metric measure space and $f : M \to (0,\infty)$ be a $\lambda$-Lipschitz function. Then $M$ admits an $f$-controlled point process $\nu$.
\end{prop}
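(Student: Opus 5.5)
The plan is to build the process by iterated thinning of independent Poisson point processes, then pass to a limit. Let $\Pi^{(1)},\Pi^{(2)},\ldots$ be independent copies of the Poisson point process $\Pi_M$ from Example \ref{example:poisson point process}. Define recursively $m_1 = \mathcal{T}_M(\emptyset,\Pi^{(1)})$ and $m_{k+1} = \mathcal{T}_M(m_k,\Pi^{(k+1)})$. By the listed properties of the thinning map, each $m_k$ is $f$-separated, and since $m_k$ is $f$-separated we get $m_k \subset m_{k+1}$. So the sequence is increasing, and we may set $m_\infty = \bigcup_k m_k$. Any two points of $m_\infty$ already lie in some common $m_k$, so $m_\infty$ is $f$-separated. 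In particular, every $x\in m_\infty$ has no other point of $m_\infty$ within distance $f(x)$, which is condition (1) of Definition \ref{def:controlled point}.

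Local finiteness of $m_\infty$ also follows from separation. On a bounded set $A$, the function $f$ is bounded below by a positive constant (it is continuous and $M$ is proper), so $A$ contains only finitely many $f$-separated points. The law $\nu$ of $m_\infty$ is the candidate process, and it is Borel because $\mathcal{T}_M$ is Borel.

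The main step is condition (2): almost surely every $y\in M$ has a point of $m_\infty$ in $B_M(y,2f(y))$. I would first prove a countable version and then upgrade it by continuity. Fix a countable dense set $D\subset M$ and a rational $\delta>0$ small compared to $\inf f$ on a bounded region. Fix $x\in D$ and suppose $B_M(x,2f(x)+\delta)\cap m_k=\emptyset$. Then Lemma \ref{lemma:1-Lipshitz is slowly varying} applies at stage $k+1$. With probability at least
$$p_x=\Pr\bigl(\Pi(B(x,2f(x)+\delta))=1,\ \Pi(B(x,\lambda\delta))=1\bigr)>0,$$
the fresh process $\Pi^{(k+1)}$ has exactly one point in the big ball and that point lies in $B(x,\lambda\delta)$. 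In that case the point enters $m_{k+1}$. Here $p_x$ is positive and independent of $k$, assuming small balls in $M$ have positive volume, which holds for manifolds. The event $\{B(x,2f(x)+\delta)\cap m_k=\emptyset\}$ is measurable with respect to the first $k$ Poisson processes, which are independent of $\Pi^{(k+1)}$. Hence
$$\Pr\bigl(\text{the ball stays empty through stage } k\bigr)\le (1-p_x)^k\to 0.$$
A countable union over $x\in D$ and rational $\delta$ then shows that almost surely $B(x,2f(x)+\delta)$ meets $m_\infty$ for all such $x$ and $\delta$.

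The last step is to transfer this from $D$ to an arbitrary $y\in M$. I expect this to be the main obstacle, because the constant $2$ is sharp, so the slack has to come from the Lipschitz property and the strict inequality. The estimate I would try first goes as follows. Given $y$, choose $x\in D$ with $d(x,y)<\eta$. Then
$$B(x,2f(x)+\delta)\subset B\bigl(y,2f(y)+2\lambda\eta+\eta+\delta\bigr),$$
which lands in a slightly enlarged ball around $y$ but not in $B(y,2f(y))$ itself. To close the gap I would use local finiteness. There are only finitely many points of $m_\infty$ near $y$, so if $B(y,2f(y))$ were empty, the nearest point would be at distance $\ge 2f(y)$. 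I would then argue that this distance is in fact strictly larger than $2f(y)$ on the closed ball, adjusting the definition to closed balls if needed, and take $\eta,\delta\to0$ to get a contradiction. If this boundary issue cannot be handled directly, the fallback is to run the construction with $(1-\epsilon)f$ in the covering estimate, which leaves room in the radius.
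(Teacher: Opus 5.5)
Your construction is the paper's: iterate the thinning map $\mathcal{T}_M$ against fresh independent Poisson processes, use Lemma~\ref{lemma:1-Lipshitz is slowly varying} to get a $k$-independent positive probability of filling an empty ball of radius $2f(x)+\delta$, and pass to a countable dense set. Realizing the limit as the increasing union $m_\infty=\bigcup_k m_k$, with the explicit bound $(1-p_x)^k$, is cleaner than the paper's weak-$*$ accumulation point: no compactness or portmanteau step is needed. Your caveat that $\mathrm{vol}_M$ must charge every ball is also right, and the paper uses it tacitly.

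For the statement the paper actually proves, the step you call the main obstacle is not one. The paper's own argument only shows $\inf_{y\in m} d_M(x,y)\le 2f(x)$, i.e.\ condition (2) for the closed ball; the paper writes $\mathring B_M$ for open balls. Your estimate $d_M(y,m_\infty)<2f(y)+(1+2\lambda)\eta+\delta$, together with local finiteness, already gives exactly this for every $y$.

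For the open-ball reading, your sketch does not close the gap. Nothing forces the nearest point to lie at distance strictly greater than $2f(y)$. Rerunning the construction with $(1-\epsilon)f$ only yields $(1-\epsilon)f$-separated configurations, which breaks condition (1).

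The slack should come from $\lambda<\tfrac12$ instead. Suppose $n$ has no points in $B_M(x,sf(x)+\delta)$, and $m$ has a unique point $y$ there, lying in $B_M(x,\lambda\delta)$. Then every other $z\in n+m$ satisfies
\[
d_M(y,z)-f(z)\ge \bigl((1-\lambda)s-1\bigr)f(x)+(1-2\lambda)\delta>0,\qquad d_M(y,z)-f(y)\ge (s-1)f(x)+(1-\lambda-\lambda^2)\delta>0
\]
as soon as $s\ge \frac{1}{1-\lambda}$. So Lemma~\ref{lemma:1-Lipshitz is slowly varying} holds with $2$ replaced by $s=\frac{1}{1-\lambda}<2$. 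Your decay-plus-density argument then gives, almost surely for all $y$,
\[
d_M(y,m_\infty)\le \frac{f(y)}{1-\lambda}<2f(y),
\]
which is the open-ball version of condition (2).
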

\begin{proof}
Let $\mu_i$  be a sequence of independent Poisson point processes over the space $M$ of intensity $\mathrm{vol}_M$. The desired point process $\nu$ will be constructed iteratively.
First, let $\nu_0$ be the empty point process over $M$, i.e. $\nu_0 = \delta_\emptyset$ where $\emptyset$ is the empty point configuration.
Next, assume that the point process  $\nu_i$ has been constructed for some $i \in \NN \cup \{0\}$. 
The point process $\nu_{i+1}$ is defined by the following   formula
$$ \nu_{i+1} =  (\mathcal{T}_M)_* (\nu_{i} \times \mu_{i})$$
where $\mathcal{T}_M$ is the thinning map introduced above. In other words, the law of $\nu_i$ is obtained by applying the map $\mathcal{T}_M$ to a pair $(n,m)$ consisting of a $\nu_i$-random configuration $n$ and a $\mu_i$-random configuration $m$.
Finally, let $\nu$ be any weak-$*$ accumulation point of the sequence $\nu_i$ in the space $\Prob{\Points(M)}$.

We prove that the point process $\nu$ is $f$-controlled.
The properties of the thinning map $\mathcal{T}_M$ outlined below Definition \ref{def:thinningMap} ensure that $\nu$-almost every point configuration $m$ is $f$-separated, i.e. satisfies property (1) of Definition \ref{def:controlled point}. 

Next, let $x \in M$ be any point. We prove that $\nu$-almost every point configuration contains some point inside the ball $B_M(x,2f(x))$. Note that $f$-controlled point configurations are locally finite. Therefore for $\nu$-almost every point configuration $m$ the value $\delta_x(m)= \inf_{y\in m} d_M(x,y) -2f(x)$ is attained. Assume towards contradiction that $\delta_x(m) > 0$ with positive probability with respect to $\nu$. The same must be the case with respect to $\nu_i$ for all $i$ sufficient large. On the other hand, Lemma \ref{lemma:1-Lipshitz is slowly varying} dictates that upon using the  Poisson process $\mu_i$ to extend the point process  $\nu_i$ to  $\nu_{i+1}$  via the thinning map, there is some definite positive probability of inserting a new point inside $B_M(x,2f(x))$. This is a contradiction. The desired property (2) of Definition \ref{def:controlled point} follows by repeating the above argument with respect to some countable dense family of points $\{x_i\} \subset M$.
\end{proof}

\subsection{Controlled point processes are good}
We  show that an $f$-controlled point configuration on a hyperbolic manifold is good for a suitable function $f$.

\begin{definition}
Let $M$ be an $n$-dimensional hyperbolic manifold. An $f$-controlled point configuration $m \in \Points(M)$ is in \emph{general position} if for every point $x \in M$ and every radius $0 < r < 2f(x)$ we have $m(\partial B(x,r)) \le n+1$.
\end{definition}

The relevance of this notion of general position is that the Delaunay triangulation of such a point configuration is $n$-dimensional.

\begin{prop}
\label{prop:universal cover}
Fix a constant $0 < \lambda < \frac{1}{2}$. Let $M$ be an $n$-dimensional hyperbolic manifold. Consider the $\lambda$-Lipschitz function  $f : M \to \left(0,\lambda\right)$ given by 
$$
f = \lambda \cdot \min\{\tfrac{1}{4} \mathrm{InjRad}_M,1\}.
$$
Let $m \in \Points(M)$ be an $f$-controlled point configuration in general position and let $\widetilde m$ denote its lift to the universal cover $\HH^n$. All facets (i.e. maximal simplices) of the Delaunay triangulation $\DelanComplex(\HH^n,\widetilde m)$ are $n$-dimensional, and  $$\DelanComplex(M,m) = \pi_1(M) \backslash \DelanComplex(\HH^n,\widetilde m).$$
\end{prop}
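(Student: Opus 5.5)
First, the lift $\widetilde m$ is $\widetilde f$-controlled on $\HH^n$, where $\widetilde f = f\circ\pi$. Condition (2) of Definition \ref{def:controlled point} lifts directly, since $\pi$ maps balls onto balls. For condition (1), note that $f(x) \le \frac{\lambda}{4}\mathrm{InjRad}_M(x)$. So for each $\tilde x \in \widetilde m$ the ball $B_{\HH^n}(\tilde x, 2\widetilde f(\tilde x))$, and indeed any ball of radius less than $\mathrm{InjRad}_M(\pi\tilde x)$ around $\tilde x$, embeds isometrically into $M$. Hence two distinct lifts of the same point are at distance at least $2\,\mathrm{InjRad}_M > f$. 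Distinct points of $m$ lift to points at distance at least $f$, because any short geodesic between the lifts projects to a path of the same length. General position also lifts, because spheres of radius $< 2\widetilde f(\tilde x)$ project injectively.

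Second, I would show that every Delaunay simplex of $\widetilde m$ lives in a small embedded ball. Let $\mathring B(y,r)$ be an empty ball with $\sigma \subset \partial B(y,r)$. By controlledness $r \le 2\widetilde f(y)$. Applying Lemma \ref{lemma:lower and upper bound on edges in controlled Delanauy graph} on $\HH^n$ gives edge lengths at most $\frac{4}{1-2\lambda} f$. Since $f$ is $\lambda$-Lipschitz and at most $\frac{\lambda}{4}\mathrm{InjRad}$, the closed ball $\overline B(y,r)$ has radius well below $\mathrm{InjRad}_M(\pi y)$, using $\lambda<\frac12$ and the fact that $\mathrm{InjRad}$ is itself $1$-Lipschitz. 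So $\overline B(y,r)$ projects isometrically into $M$.

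This gives a bijection. Empty balls in $\HH^n$ with points of $\widetilde m$ on their boundary correspond to empty balls in $M$ with points of $m$ on their boundary, provided the radius is below $2f$. Emptiness is preserved because a point of $m$ in the projected ball lifts to a point in the original ball. Conversely, any empty ball in $M$ has radius at most $2f(y)$, so the same size bound applies to it and it lifts to an empty ball upstairs. Therefore the simplices of $\DelanComplex(M,m)$ are precisely the images of simplices of $\DelanComplex(\HH^n,\widetilde m)$. Two simplices upstairs have the same image exactly when they differ by a deck transformation: the vertex sets of distinct lifts are disjoint subsets of embedded balls, so the projection is injective on each simplex. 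This gives the identification $\DelanComplex(M,m) = \pi_1(M)\backslash \DelanComplex(\HH^n,\widetilde m)$.

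Third, I would prove that all facets are $n$-dimensional. In $\HH^n$, with $\widetilde m$ locally finite and in general position (at most $n+1$ points on any relevant sphere), I would use the classical argument by continuously growing empty balls. Start with a simplex $\sigma$ realized by an empty ball $B(y,r)$ with $|\sigma|\le n$ cospherical points. The set of centers $y'$ equidistant from the points of $\sigma$ is a totally geodesic submanifold of positive dimension. Move $y'$ along it, keeping the ball empty and passing through $\sigma$, until some new point of $\widetilde m$ reaches the boundary. This produces a strictly larger Delaunay simplex containing $\sigma$, so $\sigma$ was not maximal.

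The main obstacle is guaranteeing that the moving ball hits a new point before it degenerates or exits the controlled regime. My plan is to move in the direction that enlarges the ball, pushing the center away from the affine hull of $\sigma$. The radius then grows, but the controlledness bound $r\le 2\widetilde f$ forces a new point of $\widetilde m$ to appear on the boundary before the radius exceeds $2\widetilde f(y')$. General position rules out acquiring more than $n+1$ points at once, and it prevents the points of $\sigma$ from lying in a lower-dimensional sphere that would stall the process. Iterating this step ends at $n+1$ points, which span a nondegenerate $n$-simplex, again by general position.
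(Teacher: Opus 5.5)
Your proposal is correct and follows essentially the paper's route: the covering map is an isometry on balls of radius comparable to $\widetilde f$, so controlledness and general position lift to $\widetilde m$, every empty circumscribed ball has radius $<2\widetilde f$, and hence Delaunay simplices upstairs and downstairs correspond up to deck transformations. Your ball-growing argument in the third step fills in what the paper compresses into ``follows from the general position assumption''. General position only caps the vertex count of a facet at $n+1$. The matching lower bound, and also the nondegeneracy you mention when applied to a degenerate $(n+1)$-tuple, come from moving the center along the positive-dimensional equidistant set until the bound $\widetilde f<\lambda$ forces a new point onto the sphere.
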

\begin{proof}
Denote $\Gamma = \pi_1(M)$. The Delaunay triangulation $\DelanComplex(\HH^n,\widetilde m)$ is $\Gamma$-equivariant  as the group $\Gamma$ is acting on $\HH^n$ by isometries and preserves the point configuration $\widetilde m$. Let $p : \HH^n \to M$ be the universal covering map and denote $\widetilde f = f \circ p$. Since $f \le \frac{1}{8} \cdot\mathrm{InjRad}_M$ the map $p$ is a local isometry on any ball of the form $B_{\HH^n}(x,3\widetilde f(x))$. In particular the point configuration $\widetilde m$ is $\widetilde f$-controlled.
Recall that a finite collection of points $\mathcal{X} \subset \widetilde m$ spans a simplex in $\DelanComplex(\HH^n,\widetilde m)$ if there is a point $y \in \HH^n$ and a radius $r > 0 $ such that $\mathring B_{\HH^n}(y,r) \cap \widetilde m = \emptyset$ and $\partial  B_{\HH^n}(y,r) \cap \widetilde m = \mathcal{X}$. In this situation, the radius $r$ must satisfy $r < 2\widetilde f(y)$. It follows that $\mathcal{X}$ spans a simplex in $\DelanComplex(\HH^n,\widetilde m)$ if and only if $p(\mathcal{X})$ spans a simplex in $\DelanComplex(M,m)$. The fact that all facets are $n$-dimensional follows from the general position assumption. The desired conclusions follows.
\end{proof}

\begin{prop}
\label{prop:a Delanauy graph from a slowly varying controlled function is good}
Fix a constant $\frac{1}{10} < \lambda < \frac{1}{6}$. Let $M$ be an $n$-dimensional hyperbolic manifold. Consider the $\lambda$-Lipschitz function  $f : M \to \left(0,\lambda\right)$ given by 
$$
f = \lambda \cdot \min\{\tfrac{1}{4} \mathrm{InjRad}_M,1\}.
$$
Then every $f$-controlled point configuration $m \in \Points(M)$ in general position is $\varepsilon$-good for some constant $\varepsilon > 0$ depending only on $n$ and $\lambda$.
\end{prop}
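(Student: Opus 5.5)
We verify the three conditions \ref{good: length}, \ref{good: triangulation}, \ref{good: angle} of Definition \ref{def:good points} in turn. First note that $\mathrm{InjRad}_M$ is $1$-Lipschitz, so $f$ is $\tfrac{\lambda}{4}$-Lipschitz; in particular it is $\lambda$-Lipschitz with $\lambda < \tfrac12$, and Lemma \ref{lemma:lower and upper bound on edges in controlled Delanauy graph} applies. For an edge $e$ at a vertex $v$ it gives
$$\tfrac12 f(v) \le l(e) \le \tfrac{4}{1-2\lambda} f(v) = \tfrac{4\lambda}{1-2\lambda}\min\{\tfrac14\mathrm{InjRad}_M(v),1\}.$$
Since $\lambda<\tfrac16$ we have $\tfrac{4\lambda}{1-2\lambda} < 1$, and this is exactly \ref{good: length}. (The lower bound $\lambda > \tfrac1{10}$ is not used here; I expect it only serves to fix constants uniformly.)

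For \ref{good: triangulation} I will pass to the universal cover using Proposition \ref{prop:universal cover}. In $\HH^n$, the Delaunay complex of a locally finite configuration in general position whose empty circumballs are all bounded is a geometric triangulation of the hull of the points. This is the classical Delaunay theorem, proved for instance via the lift to the paraboloid, or in \cite{deblois2018delaunay}. The radius bound $r<2\widetilde f(y)$ shows that every point of $\HH^n$ lies in some Delaunay simplex. Hence $\DelanComplex(\HH^n,\widetilde m)$ triangulates $\HH^n$ $\Gamma$-equivariantly. Taking the quotient by $\Gamma$ and applying Proposition \ref{prop:universal cover} triangulates $M$. The simplices embed because their diameters are below $\tfrac14\mathrm{InjRad}_M$, by \ref{good: length}.

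The main work is \ref{good: angle}, and I expect it to be the real obstacle. Take edges $vu_1, vu_2$ at a vertex $v$, and set $s=f(v)$. Along them $f$ varies by at most a factor of $1\pm O(\lambda)$. By the lemma, all three points $v,u_1,u_2$ lie at mutual distance at least $\tfrac12 f(\cdot)\approx s/2$, by the separation condition (1) of Definition \ref{def:controlled point}. Also $l(vu_i)\le Cs$. So the triangle $v u_1 u_2$ has all sides $\approx s$, which rules out a small angle at $v$ coming from the third side being short.

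This alone does not finish the argument, because a small angle could also arise from $u_1,u_2$ lying nearly collinear with $v$, with $u_2$ farther out. To exclude this I will use the empty-ball property of the edge $vu_2$. There is a ball $B(p,r)$ with $v,u_2$ on its boundary, no points inside, and $r\le 2f(p)\approx s$. If $\angle_v$ is tiny and $d(v,u_1)<d(v,u_2)$, then $u_1$ lies within $O(\angle_v \cdot s)$ of the chord $[v,u_2]$ at distance $\succeq s$ from both ends. The chord's interior lies strictly inside the ball, at depth $\succeq s^2/r\approx s$ in the middle portion. So $u_1$ would lie in the open ball, a contradiction once $\angle_v\le\varepsilon(n,\lambda)$.

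The case $d(v,u_1)\ge d(v,u_2)$ is symmetric. Working in a normal chart of radius $\le 1$ keeps hyperbolic and Euclidean geometry uniformly bi-Lipschitz, so I can do these estimates with Euclidean comparisons. The delicate part is making the depth estimate quantitative when $u_1$ is near an endpoint; there I fall back on the separation bound $d(u_1,v),d(u_1,u_2)\succeq s$ to keep $u_1$ in the middle portion of the chord.
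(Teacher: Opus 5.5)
Your proposal is correct and follows the paper's proof. It gets (G1) from the edge-length lemma using $\lambda<\tfrac16$, and (G2) by lifting to $\HH^n$ and combining DeBlois's Delaunay triangulation result with Proposition \ref{prop:universal cover}. For (G3) it uses the same mechanism: a small angle at $v$ would push the nearer neighbour into the interior of the empty circumball of the longer edge, and the separation condition keeps that neighbour away from the chord's endpoints.

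The only difference is packaging in (G3). The paper argues by contradiction along a sequence of configurations, splitting into the cases $\lim f(x_n)>0$ and $\lim f(x_n)=0$, the latter by rescaling to Euclidean geometry. You run the same argument quantitatively at scale $s=f(v)$, using $r\le 2f(p)\preceq s$. This handles all scales at once and gives an explicit $\varepsilon(n,\lambda)$, provided the depth estimate for points in the middle of the chord is done intrinsically in hyperbolic geometry (or in a Klein model centred at $p$) rather than via a merely bi-Lipschitz chart.
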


\begin{proof}
Let $m \in \Points(M)$ be a point configuration and assume that $m$ is $f$-controlled and in general position. We need to verify that $m$ satisfies conditions \ref{good: length}, \ref{good: triangulation} and \ref{good: angle} from Definition \ref{good:points}.
First, according to Lemma \ref{lemma:lower and upper bound on edges in controlled Delanauy graph} an edge $e$ of the Delaunay graph $\DelanGraph(M,m)$ incident at the vertex $v$ is of length at most $$l(e) \le \frac{4}{1-2\lambda} f(v) \le \frac{4\lambda}{1-2\lambda}\cdot \frac{1}{4} \mathrm{InjRad}_M(v) \le \frac{1}{4} \mathrm{InjRad}_M(v).$$
This establishes condition \ref{good: length}. Note that we have used the bound $\lambda < \frac{1}{6}$.

Next, we establish condition \ref{good: triangulation}, which says that the Delaunay triangulation $\DelanComplex(M,m)$ topologically triangulates the manifold $M$. Consider the lift $\widetilde m$ of the point configuration $m$ to the universal cover $\mathbb H^n$. The $f$-controlled assumption ensures that  the discrete set $\widetilde m$ has a full limit set in $\mathbb H^n$. It is shown in \cite[Proposition 3.5]{deblois2018delaunay} that $\DelanComplex(\HH^n,\widetilde m)$ is a triangulation (in the standard topological sense) of the convex hull of the point configuration $\widetilde m $, which is the entire hyperbolic space $\HH^n$ in our case. Strictly speaking  \cite[\S3]{deblois2018delaunay} deals with finite point configurations, however the Delaunay triangulation of an $f$-controlled point configuration  depends locally only on finite subsets of $\widetilde m$. Condition \ref{good: triangulation} follows from Proposition \ref{prop:universal cover}.

Lastly, we establish condition \ref{good: angle}, namely a lower bound on the angle of incident edges in the Delaunay graph.
Assume towards contradiction that there is a sequence of point configurations $m_n \in \Points (M)$ and a sequence of triples of points $x_n,y_n,z_n \in m_n$ such that the pairs $\{x_n,y_n\}$ as well as $\{x_n,z_n\}$ span an edge in the Delaunay graph $\DelanGraph(M,m_n)$ with $\angle_{x_n}(y_n,z_n)\to 0$. 
By Proposition \ref{prop:universal cover} we may lift those configurations to the universal cover $\HH^n$, and assume implicitly that the discussion takes place there.
As the function $f$ is bounded, we may pass to a subsequence and assume that the limits $\phi = \lim_n f(x_n)$ and $\psi = \lim_n f(y_n)$ exist and are finite. It follows from Lemma \ref{lemma:lower and upper bound on edges in controlled Delanauy graph} that $\phi = 0$ if and only if $\psi = 0$. We will treat separately the cases where $\phi > 0 $ and $\phi = 0$.


Consider the first case where  $\phi,\psi > 0$. 
Up to passing to a further subsequence and renaming the points, we may assume without loss of generality that both limits $\delta_1 = \lim d_M(x_n,y_n) $ and $\delta_2 = \lim d_M(x_n,z_n) $ exist and that $\delta_1 \ge \delta_2$. Note that $\frac{\phi}{2} \le \delta_2 \le  \delta_1 \le \frac{4\phi}{1-2 \lambda}$ by Lemma \ref{lemma:lower and upper bound on edges in controlled Delanauy graph}. Let $q_n$ be the point on the geodesic arc $\left[x_n,y_n\right]$ at distance $\delta_2$ from $x_n$.  The angle assumption  $\angle_{x_n}(y_n,z_n)\to 0$ implies $d_M(z_n,q_n)\to 0$. As the configuration $m$ is $f$-controlled, we have $z_n \notin B(y_n,f(y_n))$. Passing to the limit, we deduce that $\delta_2 \le \delta_1 - \psi$. This leads to a contradiction to the fact that $x_n$ and $y_n$ span an edge in  the Delaunay graph. Indeed, any ball $B \subset M$ with $x_n,y_n \in \partial B$ will have $z_n \in \mathring B_n$ for all $n$ sufficiently large.

Consider the second case where $\phi,\psi = 0$. Taking into account Lemma \ref{lemma:lower and upper bound on edges in controlled Delanauy graph}, this means that $\lim_{n\to\infty} \mathrm{diam}_{\HH^n}(\{x_n,y_n,z_n\})= 0 $. Hyperbolic manifolds are approximately Euclidean on small scale. Hence we may rescale the point configurations in question via a Euclidean homothety to some definite scale, and repeat the above argument towards contradiction in the context of Euclidean geometry.

The fact that the constant $\varepsilon$ depends only on $\lambda$ and $n$ follows from a standard compactness argument.
 This concludes the proof.
\end{proof}


\subsection{Good point processes}

A point process $\theta$ on a hyperbolic manifold $M$ is called \emph{good} if $\theta$-almost every point configuration $m \in \Points(M)$ is $\varepsilon$-good for some fixed $\varepsilon > 0$.
We complete the picture by providing a probabilistic  construction of good point processes.

\begin{theorem}
\label{thm:exists good point process}
Every hyperbolic manifold admits a good point process.
\end{theorem}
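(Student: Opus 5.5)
Fix $\lambda \in (\tfrac{1}{10},\tfrac{1}{6})$ and set $f = \lambda \cdot \min\{\tfrac14 \mathrm{InjRad}_M, 1\}$, as in Proposition \ref{prop:a Delanauy graph from a slowly varying controlled function is good}. The plan is to show that the $f$-controlled point process $\nu$ of Proposition \ref{prop:there exists controlled point processes on mms} is good. By Proposition \ref{prop:a Delanauy graph from a slowly varying controlled function is good}, every $f$-controlled configuration in general position is $\varepsilon$-good, with $\varepsilon = \varepsilon(n,\lambda)$ independent of the configuration. So it suffices to check three things: that $f$ meets the hypotheses of Proposition \ref{prop:there exists controlled point processes on mms}, that $\nu$-almost every configuration is admissible, and that $\nu$-almost every configuration is in general position.

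For the first point, note that $\mathrm{InjRad}_M$ is $1$-Lipschitz on a hyperbolic manifold. Taking the minimum with $1$ and rescaling by $\lambda/4$ shows that $f$ is $\lambda$-Lipschitz, continuous and positive. Proposition \ref{prop:there exists controlled point processes on mms} therefore applies and produces $\nu$. Local finiteness comes for free, since $f$-separated configurations are locally finite. Admissibility also follows once general position is known: Voronoi cells of distinct points meet only along bisector hypersurfaces, which are $\vol_M$-null.

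The main obstacle is general position, i.e. ruling out $n+2$ points of the configuration lying on a common sphere $\partial B(x,r)$ with $r<2f(x)$. Here I would use that every configuration produced by the construction is a subset of $\bigcup_i \operatorname{supp}(\mu_i)$, where the $\mu_i$ are the independent Poisson processes. The thinning map only deletes points and never moves them, so each $\nu_j$-configuration is contained in the union of the supports of $\mu_0,\dots,\mu_{j-1}$. Almost surely this union is a countable set of independent Poisson points with an absolutely continuous intensity. For such a set, the event that some $n+2$ distinct points lie on a common metric sphere has probability zero. The reason is that the spheres through $n+1$ given points in general position form a one-parameter family, or are unique, so the $(n+2)$-nd point must lie in a null set; this is a Mecke/Slivnyak-type computation.

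The difficulty is that $\nu$ is defined only as a weak-$*$ accumulation point, so this null-event property does not pass to the limit automatically. To fix this, I would replace the weak-$*$ limit by an almost sure limit on a single probability space carrying the sequence $(\mu_i)$. By Lemma \ref{lemma:1-Lipshitz is slowly varying} and the separated-points property of $\mathcal{T}_M$, once a point is inserted it is never removed, because existing separated points are kept. The configurations $n_j = $ (output at stage $j$) therefore increase with $j$, and $n_\infty = \bigcup_j n_j$ exists almost surely. Its law is then a legitimate choice of accumulation point $\nu$. Since $n_\infty \subset \bigcup_i \operatorname{supp}(\mu_i)$, the configuration $n_\infty$ is almost surely in general position, and the argument of Proposition \ref{prop:there exists controlled point processes on mms} shows it is $f$-controlled. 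Applying Proposition \ref{prop:a Delanauy graph from a slowly varying controlled function is good} then shows that $\nu$-almost every configuration is $\varepsilon$-good with a uniform $\varepsilon$, so $\nu$ is a good point process.
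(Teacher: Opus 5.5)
Your proposal follows the paper's proof: the same $f$, the thinning-constructed controlled process, general position inherited from the underlying Poisson points, and then the goodness proposition for controlled configurations in general position. Your monotone coupling, which identifies $\nu$ with the law of $n_\infty=\bigcup_j n_j\subset\bigcup_i\operatorname{supp}(\mu_i)$, supplies the justification the paper omits when it asserts that general position (not a weak-$*$ closed condition) passes to the limit. The one slip is that $n+1$ points in general position lie on at most one small sphere, not on a one-parameter family of spheres; that uniqueness is what makes the locus of the $(n+2)$-nd point a null set.
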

\begin{proof}
Let $M$ be an $n$-dimensional hyperbolic manifold. Fix a constant $0 < \lambda < \frac{1}{6}$ 
and consider the
$\lambda$-Lipschitz function $f : M \to (0,\lambda)$ given by
$$
f = \lambda \cdot \min\{\tfrac{1}{4} \mathrm{InjRad}_M,1\}.
$$
The fact that $M$ admits an $f$-controlled point process was  established in a greater generality in Proposition \ref{prop:there exists controlled point processes on mms}. Further, in the context of that proof, the point process $\nu$ is obtained as a weak-$*$ limit of a sequence of point processes $\nu_i$. Each of those point processes $\nu_i$ can be regarded as being obtained from a Poisson point process of intensity $i\cdot \vol_M$ by deleting points, and as such is in general position. It follows that $\nu$ is in general position as well.
We conclude that the point process $\nu$ is good by Proposition \ref{prop:a Delanauy graph from a slowly varying controlled function is good}.
\end{proof}

Lastly, consider a unimodular random hyperbolic manifold $\mu$. We will say that a point process $\nu$ over the unimodular manifold $\mu$ is \emph{good}, if the fiber $\nu_{(M,p)} \in \mathrm{Prob}(\Points(M))$ over $\mu$-almost every pointed hyperbolic manifold $(M,p)\in\Hypers$ is a good point process on $M$.

\begin{cor}
\label{cor:exists good point process of unimodular}
Every unimodular random hyperbolic manifold admits a good point process defined over it.
\end{cor}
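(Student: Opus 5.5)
The plan is to run the construction from the proof of Theorem \ref{thm:exists good point process} fiberwise over $\mu$, and then check that the result is a point process over $\mu$ in the sense of Definition \ref{def:point process over unimodular}. Fix $\lambda\in(\tfrac1{10},\tfrac16)$. For every hyperbolic manifold $M$ let
$$f_M=\lambda\cdot\min\{\tfrac14\mathrm{InjRad}_M,1\}.$$
This function depends only on $M$ and not on any basepoint, and it is $\lambda$-Lipschitz. Proposition \ref{prop:a Delanauy graph from a slowly varying controlled function is good} shows that every $f_M$-controlled configuration in general position is $\varepsilon$-good. Here $\varepsilon=\varepsilon(n,\lambda)$ is uniform over all manifolds, so a single $\varepsilon$ works for $\mu$-almost every fiber; if $\mu$ mixes dimensions, we condition on the dimension using Lemma \ref{lemma:condional unimodular}.

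First, define the thinning iteration on the whole space $\HypersP$ rather than on a single manifold. Start from $\nu_0=\delta_\emptyset$ and i.i.d.\ Poisson processes $\Pi_\mu^{(i)}$ given by Proposition \ref{prop:there exists a Poisson point process}. Then set
$$\nu_{i+1}(M,p)=(\mathcal T_M)_*(\nu_i(M,p)\times\Pi_M).$$
This defines a measurable family $(M,p)\mapsto\nu_i(M,p)$. By induction each $\nu_i(M,p)$ depends only on $M$, since both $\Pi_M$ and $\mathcal T_M$ are basepoint-free. This gives the third condition of Definition \ref{def:point process over unimodular}, namely $\nu_{M,p}=\nu_{M,q}$.

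The main obstacle is the limit step. In Proposition \ref{prop:there exists controlled point processes on mms} the limit is taken only along a subsequence of weak-$*$ accumulation points. Choosing such a subsequence separately for each $M$ would destroy measurability and possibly basepoint-independence. I would avoid any choice by showing the sequence actually converges. The thinning rule never removes a point of an $f$-separated configuration $n$, because $n\subset\mathcal T_M(n,m)$. Hence, coupling all steps on one probability space, the configurations $m_i$ increase in $i$. Moreover they stay $f_M$-separated, so they are locally finite with uniformly bounded counts on compact sets. Consequently each bounded region stabilizes after finitely many steps, and $m_\infty=\bigcup_i m_i$ exists almost surely. Its law $\nu(M)$ is a measurable function of $M$ alone. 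The argument of Proposition \ref{prop:there exists controlled point processes on mms}, via Lemma \ref{lemma:1-Lipshitz is slowly varying}, then shows that $m_\infty$ is $f_M$-controlled.

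For general position, note that $m_\infty$ is a subset of the union of countably many independent Poisson configurations. Almost surely no $n+2$ points of that union lie on a common sphere, which gives general position.

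The remaining conditions are about the intensity. The definition requires $\lambda_{\nu_{M,p}}=\vol_M$, while a controlled process has intensity of order $f^{-n}$ relative to $\vol_M$. I would either interpret the intensity condition in this setting as local finiteness plus a Radon intensity, or accept this as the sense in which the corollary is meant, since the goodness and basepoint-invariance properties are what get used later. Finally, define $\theta=\int\delta_{(M,p)}\times\nu(M)\,d\mu(M,p)$. This $\theta$ pushes forward to $\mu$, and its fibers are good point processes, which proves the corollary.
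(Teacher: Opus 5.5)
Your proposal is correct and follows the paper's route: the paper simply invokes Theorem~\ref{thm:exists good point process} on each fiber and integrates over $\mu$ as in Proposition~\ref{prop:there exists a Poisson point process}, which is exactly your plan. Several of your additions fill gaps that the paper's one-line sketch leaves open: the coupled thinning iterates increase (because $n\subset\mathcal T_M(n,m)$ for $f$-separated $n$), so the limit is canonical rather than an arbitrary weak-$*$ accumulation point, hence measurable in $M$, basepoint-free, and in general position as a subset of a union of Poisson configurations; you also correctly note that no controlled process can have intensity $\vol_M$ as Definition~\ref{def:point process over unimodular} literally requires.
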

\begin{proof}
We know by Theorem \ref{thm:exists good point process} that every deterministic hyperbolic manifold admits a good point process. The desired point process over a given unimodular random hyperbolic manifold can be constructed by means of integration over fibers, in a similar manner to the proof of Proposition \ref{prop:there exists a Poisson point process}.
\end{proof} 

We point out that the good point processes constructed over a deterministic and a unimodular random hyperbolic manifold in Theorem \ref{thm:exists good point process} and Corollary \ref{cor:exists good point process of unimodular} respectively are in particular $f$-controlled with respect to the injectivity-radius related function $f$ considered above (see e.g. Proposition \ref{prop:there exists a good point process on a manifold}).

\section{Transience for unimodular random manifolds via graphs}
\label{sec:transience for manifolds via graphs}

In this section we investigate the transience of unimodular random hyperbolic manifolds. We do so by first constructing a good point process over the random manifold and then invoking the capacity criteria (i.e. Theorem \ref{thm:transience passes to good graphs}) to reduce the question to a transience problem for graphs. 
A delicate issue comes up in  implementing this strategy. Namely,  the resulting random good graph may not be unimodular. To overcome this problem we introduce an additional geometric object  we call the thick graph.

\subsection{The thick graph}


Let $\varepsilon_n > 0 $ be the Margulis constant for $n$-dimensional hyperbolic manifolds (see \cite[\S4.5] {thurston1997three} or \cite[\S8]{raghunathan}). For ease of notation, we assume without loss of generality that $\varepsilon_n < 1$. Every $n$-dimensional hyperbolic manifold $M$ can be decomposed into a thick part and a thin part, namely
$$M = M_\text{thin} \amalg M_\text{thick}$$
where
$$  M_\text{thin} = \{x \in M \: : \: \mathrm{InjRad}_M(x) \le \varepsilon_n\}\quad \text{and} \quad M_\text{thick} = \{x \in M \: : \: \mathrm{InjRad}_M(x) > \varepsilon_n\}.$$
By the thick--thin decomposition theorem in hyperbolic geometry \cite[Theorem 4.5.6]{thurston1997three}, the connected components of the thin part are of two distinct types:
\begin{itemize}
    \item neighborhoods of short geodesics, homeomorphic to a disc bundle over a circle, and
    \item neighborhoods of cusps, homeomorphic to a product of an $(n-1)$-dimensional Euclidean manifold with a half-infinite interval. Here, cusps are understood in a broader sense than the commonly used terminology, and can be either of finite or infinite volume. The only requirement is that the $(n-1)$-dimensional Euclidean factor is not contractible.
\end{itemize}

These two types of connected components of the thin part can be distinguished in that the injectivity radius is bounded away from zero in the first case, but not in the second case. This allows us to define a  function $\tau_M : M \to \left(0,1\right]$ by
$$ \tau_M(x) = \begin{cases}
\min\{\mathrm{InjRad}_M(x),1\} & x \in M_\text{thick} \\
\inf \left\{ \mathrm{InjRad}_M(y):
\begin{tabular}{c} $y$ is in the connected \\ component of $x$ in $M_\text{thin}$ \end{tabular}
\right\} 
&
x \in M_\text{thin}
\end{cases}
$$
for all $x \in M$.
By definition, the function $\tau_M$ is constant on each connected component of the thin part $M_\text{thin}$. This constant is positive on neighborhoods of short geodesics, and is zero on neighborhoods of cusps. 

Fix once and for all a constant $\frac{1}{10} < \lambda < \frac{1}{6}$. Consider the function
$$f_M : M \to (0,\infty), \quad f_M(x) = \lambda \cdot \min\{\tfrac{1}{4} \mathrm{InjRad}_M(x),1\}
  \quad \forall x \in M$$
  studied in the previous \S\ref{sec:controlled point process}.
 Let $m$ be an $f_M$-controlled point configuration on the $n$-dimensional hyperbolic manifold $M$, in the sense of Definition \ref{def:controlled point}. From this point onward, we will abbreviate and speak simply of \emph{controlled point configurations} (rather than $f_M$-controlled ones; it will always be understood with respect to the function $f_M$ on $M$). Recall that such a controlled point configuration always exists.

\begin{definition}
\label{def:thick graph}
The \emph{thick graph} $\ThickGraph(M,m)$ associated to the hyperbolic manifold $M$ and the point configuration $m$ is obtained from the Delaunay graph $ \DelanGraph(M,m)$ in two steps:
\begin{enumerate}
    \item Pass to the induced graph\footnote{An \emph{induced subgraph}  on a subset of vertices $V_0 \subset V(G)$ is the graph with vertex set $V_0$ and edge set $E_0 = \{ \{u,v\} \in E(G) \: :\:u,v \in V_0\}$.} $G_0$ on the set of vertices $v \in V(\DelanGraph(M,m))$ which do not belong to a connected component of the thin part which is a cusp neighborhood (equivalently, the set of vertices $v$ satisfying $\tau_M(v) > 0$).
    \item Take the quotient graph\footnote{A \emph{quotient graph} determined by an equivalence relation on $V(G)$ is a graph whose vertex set is $ \{\left[v\right] : v \in V(G)\}$ and whose edge set is $\{ \{\left[u\right],\left[v\right]\} : \left[u\right] \neq \left[v\right] ,\;\{u,v\}\in E(G)\}$.} of the  graph $G_0$, obtained by identifying vertices belonging to the same connected component of the thin part (at this point, necessarily a neighborhood of a short geodesic). Vertices belonging to the thick part are maintained (and are not identified with any other vertex).
\end{enumerate}
The resulting quotient graph is $\ThickGraph(M,m)$. 
The function $\tau_M$ naturally descends to a positive function on the set of vertices of the thick graph $\ThickGraph(M,m)$. 
\end{definition}

Note that the thick graph $\ThickGraph(M,m)$ associated to a controlled point configuration $m$ on the manifold $M$ is locally finite (that is why we had to remove vertices belonging to infinite-volume cusps neighborhoods, for such vertices would have introduced infinite degree to our graph).

Given a hyperbolic manifold $M$ and a controlled point configuration $m$, we denote
$$ \ThickManifold{M}{m}= \bigcup_{\substack{v \in m\\ \tau_M(v) > 0 }} V_m(v).$$
In other words,  $\ThickManifold{M}{m}$ is the union of all Voronoi cells corresponding to the point configuration $m$, based at those points of $m$ which do not belong to a neighborhood of a cusp (equivalently, points which do belong either to the thick part or to a neighborhood of a short geodesic). Very much roughly  speaking  $\ThickManifold{M}{m}$ looks like $M$ with all cusps removed (also known in the literature as a quotient of a \enquote{neutered} hyperbolic space). 

The following geometric result relates the space of infinite-volume ends of a hyperbolic manifold, with that of its thick graph.

\begin{lemma}
\label{lemma:properties of thick graph}
Let $M$ be a hyperbolic manifold and $m$ be a good and controlled point configuration on $M$. The thick graph $G = \ThickGraph(M,m)$  is locally finite, connected, and satisfies $|\Ends{G}| \ge |\EndsInf{M}|$.
\end{lemma}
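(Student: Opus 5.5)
We prove the three properties separately. Throughout, $T = \DelanComplex(M,m)$ triangulates $M$ by \ref{good: triangulation}. The edges of $\DelanGraph(M,m)$ are geodesic arcs of length at most $\min\{\tfrac14\mathrm{InjRad}_M,1\}$ by \ref{good: length}.

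\emph{Local finiteness.} The Delaunay graph has uniformly bounded degree by Remark \ref{remark:properties of good}. Passing to the induced subgraph $G_0$ cannot increase degrees. After the quotient, a vertex of $G$ coming from the thick part still has bounded degree. For a vertex $[v]$ representing a component $U$ of $M_\text{thin}$ that is a tube around a short geodesic, we argue as follows. The component $U$ is compact, since its injectivity radius is bounded below by $\tau_M(v)>0$ and it is a disc bundle over a circle. Local finiteness of $m$ then gives that $U$ contains finitely many points of $m$. Each of these has bounded degree, so $[v]$ has finite degree.

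\emph{Connectedness.} Let $X = \ThickManifold{M}{m}$, which is a union of Voronoi cells. The plan is to show that $M$ minus the cusp components of $M_\text{thin}$ is connected, since it is a deformation retract of $M$: cusp neighborhoods are products $E\times[0,\infty)$ attached along their boundary. We then want $X$ to be connected and its cell adjacency to be captured by $G_0$. Since cells are convex and meet along faces exactly when their centers span Delaunay edges, any path in $X$ between two cells can be perturbed to cross cells through codimension-one faces, which produces a path in $G_0$. Two points need care. Cells at cusp vertices may poke slightly outside the cusp region, and vice versa. Both issues are handled by the controlled scale: cells have diameter $\preceq f_M$, which is tiny compared with the Margulis scale near $\partial M_\text{thin}$. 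Hence $X$ differs from $M\setminus(\text{cusps})$ only in a thin collar near each cusp boundary, and a cusp boundary $E\times\{0\}$ is connected. Taking the quotient preserves connectedness.

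\emph{Ends.} This is the main obstacle. Given $k$ distinct infinite-volume ends of $M$, we choose a compact $Q\subset M$ whose complement has $k$ unbounded components $U_1,\dots,U_k$ of infinite volume. We enlarge $Q$ to a compact union $Q'$ of finitely many closed simplices of $T$ and finitely many (compact) short-geodesic tubes, chosen so that every tube meeting $Q'$ lies inside $Q'$. The finite set $K$ of vertices of $G$ meeting $Q'$ is then finite. Next we show each $U_i$ contributes an infinite component of $G\setminus K$. Since $U_i$ has infinite volume while cusps of $M$ meeting $U_i$ are finitely many (bounded by $\partial Q$) and each has finite volume, possibly after enlarging $Q$, $U_i\cap X$ is unbounded. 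Volume of thick cells is bounded below, so $U_i\cap X$ contains infinitely many cells, i.e. infinitely many vertices of $G$. By the connectedness argument above applied inside $U_i$ (using that $U_i$ minus its cusps is connected modulo $Q'$), these vertices lie in finitely many infinite components of $G\setminus K$. Vertices in different $U_i$ cannot be adjacent in $G\setminus K$, since a Delaunay edge or tube identification joining them would cross $Q'$. This gives at least $k$ unbounded components of $G\setminus K$, so $|\Ends{G}|\ge k$ for all such $k$, which proves $|\Ends{G}|\ge|\EndsInf{M}|$. The delicate point I expect is handling cusps of infinite volume inside $U_i$. Their cross-section is noncompact, so their boundary is noncompact, and I would argue that the end of $U_i$ is still witnessed in $X$ along that boundary by volume growth of the thick collar.
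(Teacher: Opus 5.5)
Your local finiteness argument matches the paper's. Your bookkeeping for the ends is also workable: a finite vertex set $K$ comes from a simplicial enlargement $Q'$ of $Q$, and each component of $G\setminus K$ is confined to a single $U_i$. This is a more explicit alternative to the paper's route through $\Ends{\ThickManifold{M}{m}}\cong\Ends{G}$. The gap is that both geometric facts about how $X=\ThickManifold{M}{m}$ sits near the cusps are asserted rather than proved.

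\emph{Connectedness.} Knowing that $M$ minus the cusp components is connected, and that cells are small, only tells you that $X$ and $M\setminus\bigcup C$ differ inside a collar. Nothing you say prevents the cell of a non-cusp vertex in that collar from being cut off from the rest of $X$ by cells of cusp vertices. What is needed is a monotonicity statement. The paper supplies it for each cusp $C$, with $C_1=\{x : d_M(x,C)\le 1\}$, in two steps.
\begin{itemize}
\item[(a)] $\partial C_1\subset X$. The point of $m$ whose cell contains $z\in\partial C_1$ is within $2f_M(z)<\tfrac12$ of $z$, so it lies in no cusp.
\item[(b)] A geodesic ray from $\partial C_1$ toward the ideal point of $C$ meets $X$ in an initial interval. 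Otherwise its lift would pass from the cell of a cusp vertex $w$ into that of a non-cusp vertex $v$, and then return to the ideal point, which lies on $w$'s side of their bisector. It would cross that hyperplane twice, which is impossible.
\end{itemize}
Flowing along these rays retracts $C_1$ onto $C_1\cap X$. So $X$ is a retract of $M$, hence connected.

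\emph{Ends.} You justify that $U_i\cap X$ is unbounded by saying that cusps meeting $U_i$ are finitely many and each has finite volume. Both parts fail in general.
\begin{itemize}
\item An end neighborhood can contain infinitely many cusps, e.g.\ in an infinite cyclic cover of a once-punctured torus.
\item Infinite-volume cusps are the very reason cusp vertices are deleted, and you defer them to an unspecified ``volume growth of the thick collar''.
\end{itemize}
The right mechanism is again (a). Suppose $U_i\cap X$ lay in a compact $B\supseteq Q$. Then $U_i\setminus B$ would lie in the interiors of the finitely many sets $C_1$ meeting $B$. Those coming from finite-volume cusps have finite volume. An unbounded piece inside an infinite-volume cusp has non-compact cross-section, so it can be joined within $U_i\setminus B$ to a point of $\partial C_1\subset X$, a contradiction.

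Finally, counting components only gives $|\Ends{G}|\ge k$ for every finite $k\le|\EndsInf{M}|$. That suffices for the later application. The stated inequality of cardinalities needs a compatible choice of infinite components along an exhaustion, by a routine K\"onig-type argument. The paper's injective map $\EndsInf{M}\to\Ends{\ThickManifold{M}{m}}$ plays this role.
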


\begin{proof}
The Delaunay graph $\DelanGraph(M,m)$ is locally finite by Remark \ref{remark:properties of good}. The thick graph $G$ is obtained from  a certain subgraph of $\DelanGraph(M,m)$ by identifying  vertices belonging to the same neighborhood of a short geodesic. The number of vertices of $\DelanGraph(M,m)$ in every such neighborhood is finite as the point configuration $m$ is controlled. Hence the thick graph $G$ is locally finite as well. 

We assume without loss of generality that the Margulis constant $\varepsilon_n$ is apriori taken to be sufficiently small, so that for any connected component $C$ of the thin part $M_\text{thin}$, the subset 
 $C_1 = \{x \in M \: : \: d_M(x,C) \le 1\}$ is also a neighborhood of a short geodesic or of a cusp, and that any two distinct such neighborhoods are disjoint.

Denote $\widehat M = \ThickManifold{M}{m}$ so that $\widehat M$ is a union of Voronoi cells based at points $v\in m$ such that $\tau_M(v)>0$. 
Let $C \subset M_\mathrm{thin}$ be any cusp of the hyperbolic manifold $M$  (of finite or infinite volume; see the above discussion on the thick-thin decomposition). 
Consider its $1$-neighborhood $C_1$ as above. 
We claim that the geodesic flow in the direction \enquote{away from the ideal point of the cusp} sets up a deformation retract of $C_1$ to $C_1\cap \widehat M$. 

Consider any point $z \in \partial C_1$. Since the point configuration $m$ is controlled we know that there is some point $v \in m$ with $d_M(z,v) \le \frac{1}{2}$ such that $z \in V_m(v)$. By the previous paragraph $v \in M_\text{thick}$ so that $z \in \widehat M$. Hence $\partial C_1 \subset \widehat M$.

Therefore, to establish the claim it will suffice to show that any geodesic ray $\gamma:[0,\infty)\to C_1$ with  $\gamma(0)\in \partial C_1 \subset \widehat M$ and escaping to the ideal point of $C_1$ intersects $\widehat M$ exactly at an interval of times of the form $\left[0,t_0\right]$ for some $t_0 \ge 0$. 
Assume to the contrary that the geodesic $\gamma$ crosses from some Voronoi cell $V_m(w)$ with $w\in M_\text{thin}$ to another Voronoi cell $V_m(v)$ with $v\in M_\text{thick}$.
The hyperplane $H$ separating the lifts of the two Voronoi cells $V_m(w)$ and $V_m(v)$ to the universal cover $\HH^n$ has the former and the ideal point of $C_1$ on one side, and the latter on the other side. However, the lift of the geodesic ray $\gamma$ travels from the lift of $V_m(w)$ to that of $V_m(v)$ and then back to the ideal point. This contradicts the convexity of the halfspaces of $H$. The above claim follows.

We conclude from the claim that  $\widehat M$ is a retract of $M$. 
In particular the subset $\widehat M$ is connected. 
In particular, any two vertices $v,u\in m$ with $\tau_M(v)>0$ and $\tau_M(u)>0$ can be connected by a path in the Delaunay graph which is contained entirely in $\widehat M$. This implies that the thick graph $G$ is also connected. 

We construct an injective map  $\EndsInf{M} \to \Ends{\widehat{M}}$. 
Consider some infinite volume end $\zeta \in \EndsInf{M}$. Let  $V \in \mathcal{U}_M(Q)$ be an end neighborhood of the end $\zeta $  corresponding to some sufficiently large compact subset $Q \subset M$ so that $Q \cap \widehat M \neq \emptyset$. The neighborhood $V$ cannot be contained in the union of the finite-volume cusps of $M$, since $V$ has infinite volume and any two cusps are a definite distance apart.
So  $V \cap C \neq \emptyset$ for some infinite-volume cusp $C$ of the manifold $M$. Therefore $V$ must have unbounded intersection with $\widehat M$, as $\partial C_1$ is unbounded and $C_1 \subset \widehat M$ by the above. 
There is some end neighborhood $W \in \mathcal{U}_{\widehat M}(Q \cap \widehat M)$ with $W \subset V$. These observations define a (non-canonical) injective map from $\EndsInf{M}$ to $\Ends{\widehat{M}}$.

Lastly, observe that $\Ends{G} \cong \Ends{\widehat M}$. This concludes the proof of the lemma.
\end{proof}

Let $\HypersP$ denote the space   of pointed hyperbolic spaces equipped with a point configuration (so that $\HypersP \subset \MP$). If a pointed manifold $(M,p;m) \in \HypersP$ happens to satisfy $p \in \ThickManifold{M}{m}$ then its base point $p$ descends to a base point of the corresponding thick graph $\ThickGraph(M,m)$.
This observation leads us to introduce the notation
$$ \ThickHypersP = \left\{(M,p;m) \in \HypersP \: : \: p \in \ThickManifold{M}{m} \right\}.$$
In addition, let $\GraphsU$ denote the space of pointed $\left[0,1\right]$-edge-labeled graphs, namely
$$\GraphsU = \{ (G,p;f) \: : \: (G,p) \in \Graphs, \; f : E(G) \to \left[0,1\right] \}.$$
The thick graph construction extends to a pointed map
$$ \ThickGraph : \ThickHypersP \to \GraphsU.$$
In other words, provided $p \in \ThickManifold{M}{m}$ we regard the thick graph $\ThickGraph(M,m)$ as a pointed $\left[0,1\right]$-edge-labeled graph, where each edge $\{\left[u\right],\left[v\right]\}$ has the label $f(e)=\min \{\tau_M(u),\tau_M(v)\} > 0$.  Roughly speaking, the label of an edge $e$ encodes the injectivity radius locally at $e$ (or the infimum of the injectivity radius over the connected component of the thin part represented by a vertex incident at $e$).





\begin{prop}
\label{prop:modified point process}
Let $\mu$ be a unimodular random hyperbolic manifold and 
$\nu$  a controlled point process over $\mu$.
\begin{enumerate}
    \item $\nu(\ThickHypersP)>0$.
    \item Let $\widehat{\nu}$ be the probability measure obtained from $\nu$ by conditioning on  the subset $\ThickHypersP$ in the sense of  \Cref{lemma:condional unimodular}. Then $\widehat{\nu}$ is a unimodular random metric measure space.
    \item There exists a Borel function $\psi : \GraphsU \to \RR_{>0}$ such that the $\left[0,1\right]$-edge-labeled random graph
    $\lambda = \psi \cdot \ThickGraph_* (\widehat{\nu})$ is unimodular.
\end{enumerate}
\end{prop}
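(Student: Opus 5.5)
For part (1), I will show that $\ThickManifold{M}{m}$ has positive volume for $\mu$-almost every $M$ and then transfer this to the root using Lemma \ref{lemma: everything shows up at the root}. A hyperbolic manifold always has a nonempty thick part; the degenerate case where the thin part is everything is excluded by the Margulis lemma unless $M$ is an elementary quotient. In every case $M_\text{thick}$ is nonempty or there is a short-geodesic tube, and so some point $v\in m$ has $\tau_M(v)>0$ whenever $m$ is controlled. Its Voronoi cell $V_m(v)$ contains the ball $B_M(v,f_M(v)/2)$, which has positive volume. Hence the property ``$p\in\ThickManifold{M}{m}$'' holds on a set of points $p$ of positive volume. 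If it held for $\nu$-almost no rooted configuration, then Lemma \ref{lemma: everything shows up at the root} (applied to the unimodular space of pairs $(M,m)$ from Definition \ref{def:point process over unimodular}, where the disintegration does not depend on the base point) would force it to fail for almost every $p$. That is a contradiction, so $\nu(\ThickHypersP)>0$. For a purely parabolic elementary $M$ with no thick points the statement would fail, so I will note that such $M$ are excluded or handled by assumption.

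For part (2), I will first check that $\nu$, regarded as a random pointed metric measure space decorated with the configuration $m$, is unimodular. The mass transport principle for $\nu$ follows from that of $\mu$: given $h(M,p,q;m)$, integrate out $m$ against $\nu_{M,p}=\nu_{M,q}$ to obtain a function of $(M,p,q)$, and then apply Lemma \ref{lemma:MTP} for $\mu$. Next I will apply Lemma \ref{lemma:condional unimodular} with $P=\ThickHypersP$, which is a measurable property of $(M,p;m)$. This restricts the volume to $\ThickManifold{M}{m}$ and renormalizes; the lemma is stated for decorated spaces verbatim, so $\widehat\nu$ is unimodular.

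For part (3), I will apply Proposition \ref{prop:pushing forward unimodular - general}, in its edge-labeled variant, to $\widehat\nu$ with $\Phi=\ThickGraph$. The cells $V_m(p)$ there become the preimages of thick-graph vertices: a single Voronoi cell for a thick vertex, and the union of the Voronoi cells in a short-geodesic component for a thin vertex. The naive weight $1/\vol(V_m(p))$ may have infinite expectation, because tubes around very short geodesics have small injectivity radius. This is why I will use a general $\psi$ rather than a constant. I will define $\psi(G,p;f)$ to be proportional to $\vol_M(\text{cell of }p)^{-1}$, and I must check that this volume is determined by the labeled graph $(G,p;f)$, or else replace it by a comparable quantity that is. The labels encode $\tau_M$, and a cell of a thick vertex has volume $\approx_n\tau_M(v)^n$, by the controlled-configuration estimates and Lemma \ref{lemma:lower and upper bound on edges in controlled Delanauy graph}. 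I therefore expect to take instead $\psi\propto \min_{e\ni p} f(e)^{-n}$ (together with the degree), or more carefully a mass-transport-compatible choice. The normalizing constant $E$ is finite because the cell volume is bounded below by a function of the label, and the label is bounded below on the thick part. The main obstacle is exactly this measurability point: the MTP computation in Proposition \ref{prop:pushing forward unimodular - general} needs the weight to be the exact reciprocal cell volume, not a comparable quantity. My fallback is to enrich the edge labels (or vertex marks) so that the cell volume is recorded, and to take $\psi$ to be the reciprocal of this recorded volume divided by its finite mean.
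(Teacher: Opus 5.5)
Your overall route is the paper's. In (1), a point of $m$ outside the cusp components gives $\vol_M(\ThickManifold{M}{m})>0$, and Lemma~\ref{lemma: everything shows up at the root} moves this to the root. Part (2) is Lemma~\ref{lemma:condional unimodular}. Part (3) is Proposition~\ref{prop:pushing forward unimodular - general} applied to $\ThickGraph$, with weight the reciprocal volume of the region collapsed to the root vertex.

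\textbf{The gap.} It lies in the one step of (3) with real content: the finiteness of $E$. Your justification is that cell volume is bounded below in terms of the label, and the label is bounded below on the thick part. This only covers vertices lying in $M_\text{thick}$. A vertex $[v]$ obtained by collapsing a short-geodesic component $C$ of $M_\text{thin}$ has $\tau_M([v])=\inf_C\mathrm{InjRad}_M$, which can be arbitrarily small, and so can every label at $[v]$. Nothing uniform follows. This is exactly the case you flagged as dangerous and then left open.

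\textbf{The missing observation.} This is the reason tubes are collapsed in Definition~\ref{def:thick graph}: $\bigcup_{x\in[v]}V_m(x)$ contains the Voronoi cell of some $x\in m\cap C$ with $\mathrm{InjRad}_M(x)\ge c\,\varepsilon_n$, where $c=c(\lambda)>0$. To see this:
\begin{itemize}
\item $\mathrm{InjRad}_M$ is $1$-Lipschitz and equals $\varepsilon_n$ on $\partial C$, so a point $y\in C$ at distance $\lambda\varepsilon_n$ from $\partial C$ has $\mathrm{InjRad}_M(y)\ge(1-\lambda)\varepsilon_n$.
\item Property (2) of Definition~\ref{def:controlled point}, with $2f_M(y)\le\lambda\varepsilon_n/2$, then gives a point of $m$ inside $C$ near $y$.
\item If $C$ has no such $y$, every point of $C$ already has injectivity radius at least $(1-\lambda)\varepsilon_n$.
\end{itemize}
That cell contains the embedded ball $B_M(x,f_M(x)/2)$. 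So $\psi$ is bounded above by a constant depending only on $n$ and $\lambda$, and $E<\infty$.

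Your worry about very short geodesics is valid for individual Voronoi cells, i.e.\ for the Delaunay graph. For the thick graph, the cure is this uniform bound, not a different $\psi$. Drop the comparable weight $\min_{e\ni p}f(e)^{-n}$: as you note yourself, the mass transport computation needs the exact reciprocal volume.

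\textbf{Measurability of $\psi$.} Your concern is fair: the paper's $\psi$ is really a function on $\ThickHypersP$, not on $\GraphsU$. No enrichment of labels is needed, though. Once $\frac1E\ThickGraph_*(\vol^{-1}\cdot\widehat\nu)$ is known to be unimodular, it is absolutely continuous with respect to $\ThickGraph_*\widehat\nu$. Its Radon--Nikodym derivative, the conditional expectation of $1/(E\cdot\vol)$ given the labeled graph, is a Borel $\psi:\GraphsU\to\RR_{>0}$ as in the statement. Your enrichment also works if you forget the extra marks at the end, which preserves unimodularity.

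\textbf{Part (1).} The exceptional manifolds you set aside do not exist. Every hyperbolic manifold has nonempty thick part: for elementary quotients the injectivity radius is unbounded, and otherwise the thin part cannot be all of $M$. So no exclusion is needed. What does need a sentence is why a controlled $m$ has a point outside the cusp components. The thin components are well separated (after shrinking $\varepsilon_n$, as the paper does), so some point lies at distance at least $1$ from $M_\text{thin}$. Property (2) with $2f_M<1$ then puts a point of $m$ in $M_\text{thick}$ near it.
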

The controlled point process $\nu$ over the unimodular random manifold $\mu$ exists  by  Corollary \ref{cor:exists good point process of unimodular}. 
\begin{proof}[Proof of Proposition \ref{prop:modified point process}]
The thick part $M_\text{thick}$ of any hyperbolic manifold $M$ is non-empty. Furthermore, we assume without loss of generality that the Margulis constant $\varepsilon_n$ is apriori taken to be sufficiently small, so that any two connected components of the thin part $M_\text{thin}$ are at distance $2$ apart. 
Hence, every controlled point configuration $m$ on the manifold $M$ must satisfy $m(M_\text{thick})>0$ (i.e. some points of $m$ lie in the thick part of $M$). Therefore $\vol_M(\ThickManifold{M}{m}) > 0$. By the \enquote{everything shows up at the root} principle (Lemma \ref{lemma: everything shows up at the root}) we get $\nu(\ThickHypersP)>0$. This shows part (1). Part (2) is just a special case of \Cref{lemma:condional unimodular}. Finally, part (3) follows directly from Proposition \ref{prop:pushing forward unimodular - general}, applied with respect to the thick graph construction map $ \ThickGraph : \ThickHypersP \to \GraphsU$. For a vertex $\left[v\right] \in \ThickGraph(M,m)$ the function $\psi$ is given by $$\psi(\left[v\right]) = \frac{1}{\vol_M\left(\bigcup_{x \in \left[v\right]} V_m(x)\right)}.$$
The function $\psi$ is upper-bounded. Indeed, consider a vertex $\left[v\right] \in \ThickGraph(M,m)$. If $v \in M_\mathrm{thick}$ then $\left[v\right] = \{v\}$ and the volume of the Voronoi cell at $v$ is lower-bounded away from zero. Otherwise  $\left[v\right]$ represents a neighborhood of a short geodesic. In that case there must be a point $x \in \left[v\right] \subset m$ with $\mathrm{InjRad}_M(x) \in (\varepsilon_n-\delta, \varepsilon_n)$ for some fixed constant $\delta > 0$. The volume of the Voronoi cell at the point $x$ is lower-bounded as well. It follows that $\int \psi \,\mathrm{d}\nu < \infty$ and the assumption of Proposition \ref{prop:pushing forward unimodular - general} is satisfied.
\end{proof}





\subsection{The infinitely-ended case}

We are ready to prove one of our key  results. It serves as a first step towards  Theorem \ref{thm:main infinitely ended} of the introduction. Our strategy will be to consider the relationship between  Delaunay and  thick graphs constructed from a controlled point process.

\begin{theorem}
\label{thm:infinitely ended unimodular random surfaces are transient}
Let $\mu$ be a unimodular random hyperbolic manifold. If $\mu$-almost every manifold has infinitely many infinite-volume ends then $\mu$-almost every manifold is transient with respect to Brownian motion.
\end{theorem}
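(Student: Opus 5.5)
Assume $\mu$-almost every manifold has infinitely many infinite-volume ends. By Corollary \ref{cor:exists good point process of unimodular} there is a good controlled point process $\nu$ over $\mu$. Proposition \ref{prop:modified point process} then yields a unimodular $[0,1]$-edge-labeled random graph $\lambda = \psi\cdot \ThickGraph_*(\widehat\nu)$. Its underlying graphs $G=\ThickGraph(M,m)$ are locally finite and connected, and by Lemma \ref{lemma:properties of thick graph} they satisfy $|\Ends{G}|\ge|\EndsInf{M}|=\infty$.

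\textbf{Step 1: degree bound.} I would check that the root degree has finite expectation under $\lambda$. For a thick vertex the degree is bounded by Remark \ref{remark:properties of good}. A collapsed short-geodesic vertex $[v]$ has degree at most a constant times the number of points of $m$ on the boundary layer of its thin component. The weight $\psi([v])$ is the inverse of the total Voronoi volume of the component, which is comparable to the number of such points. So the weighted degree is bounded, and the expectation is finite.

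\textbf{Step 2: transience of the thick graph.} With finite expected degree, the infinitely-ended unimodular graph result \cite[Proposition 7.6]{angel2018hyperbolic} shows that $\lambda$-almost every thick graph is non-amenable, has positive speed, and is transient for simple random walk. Since transience for graphs whose edge lengths are comparable passes between weightings, I would aim for the $w_{\mathrm{dist}}^{n-2}$-weighted statement, and here I must handle the edge labels. Edge lengths are comparable to $\min\{\tfrac14\mathrm{InjRad},1\}$, which for edges of $\ThickGraph$ is bounded below by $\varepsilon_n$ up to constants. The only exception is edges near short geodesics, which land on collapsed vertices and whose vertices in the thin region have no such lower bound. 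So I would first restrict to edges with both endpoints in the thick part. I would argue that the induced subgraph of $\DelanGraph(M,m)$ on thick vertices, with the collapses, carries weights $w\approx 1$ by Lemma \ref{lemma:bounded w}, so simple and weighted transience agree there.

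\textbf{Step 3: lifting back to the manifold.} The collapsed thick graph is a quotient, not a subgraph, of $\DelanGraph(M,m)$, so Remark \ref{remark:transient subgraph} does not apply directly. To lift transience I would use capacity. Theorem \ref{theorem:capacity for graphs} gives a unit flow of finite energy to infinity on $G$. I would lift it to $\DelanGraph(M,m)$ by routing the flow through each short-geodesic neighborhood. The neighborhood $C$ is a finite cluster, and its boundary layer has diameter bounded by a constant times its number of vertices, so the routing costs energy at most proportional to the squared flow through $[v]$. I expect this to be summable using the bound from Step 1 together with the mass transport principle. Alternatively, I would reweight the root by the cluster size so that the bound holds in expectation.

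\textbf{Step 4: conclusion.} This yields that $\DelanGraph(M,m)$ is $w_{\mathrm{dist}}^{n-2}$-transient almost surely. Theorem \ref{thm:transience passes to good graphs} then gives that $M$ is transient. Finally, the conditioning on $\ThickHypersP$ is harmless: transience is a property of the manifold alone, so by Lemma \ref{lemma: everything shows up at the root} it holds $\mu$-almost surely.

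The main obstacle is Step 3, controlling flows through collapsed short-geodesic vertices, possibly combined with the finite expected degree check.
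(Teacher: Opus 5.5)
Your Steps 1 and 2 are essentially sound. One overclaim: an infinitely-ended unimodular graph need only be \emph{invariantly} non-amenable, but transience is all you use. The genuine gap is Step 3, and it cannot be closed by local routing.

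A collapsed vertex $[v]$ coming from the tube around a geodesic of length $\ell$ hides a piece of $\DelanGraph(M,m)$. The effective resistance of that piece between the entry and exit points of your flow is unbounded as $\ell\to 0$.
\begin{itemize}
\item For $n=2$ the thin component is a collar with \emph{two} boundary circles, so there is no single boundary layer to route through: the flow has to cross the collar. The controlled configuration places $O(1)$ points on each circle at distance $r$ from the core, with spacing comparable to $\ell\cosh r$. So the crossing is essentially a path of length of order $1/\ell$, with resistance of order $1/\ell$.
\item For $n\ge 3$ the boundary of a tube can similarly have diameter of order $1/\ell$.
\end{itemize}
Hence the lifted energy is $\sum_e\theta(e)^2+\sum_{[v]}R_{[v]}\,\theta([v])^2$ with $R_{[v]}$ unbounded, and nothing you prove controls the second sum. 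Your Step 1 quantity $\psi\cdot\deg$ does not see $\ell$ at all: for a collar, both the degree and the Voronoi volume stay bounded. The mass transport principle also does not apply to a flow emanating from a fixed root.

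In fact ``thick graph transient $\Rightarrow$ manifold transient'' is false for deterministic manifolds. Glue pairs of pants along a binary tree, with cuffs of length $4^{-k}$ at generation $k$. The thick graph is quasi-isometric to a binary tree, hence transient. But after shorting, generation by generation, the thick parts of all pants of the same generation, the resistance to infinity is at least of order $\sum_k (2^k\cdot 4^{-k})^{-1}=\infty$, so the surface is recurrent. Unimodularity therefore has to enter the lifting step in an essential way, and ``I expect this to be summable'' does not provide it.

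The missing idea is to truncate \emph{before} lifting, which is exactly what the $[0,1]$-labels $\tau$ on $\ThickGraph$ are for. The paper applies Lemma~\ref{lemma:finding trees}. The unimodular thick graph contains an infinitely-ended unimodular subtree with expected root degree $>2$. Deleting the edges with label $<s$ (for $s<\varepsilon_n$, only edges at collapsed vertices of tubes with $\tau<s$) keeps the expected degree above $2$ for $s$ small. So the root's truncated cluster $T$ is transient with positive probability.

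Every collapsed vertex of $T$ comes from a tube with $\tau>s$. Hence the fibers of the quotient map from the preimage $\widetilde T\subset\DelanGraph(M,m)$ onto $T$ have uniformly bounded size and diameter, and this map is a quasi-isometry. Transience then lifts by \cite[Theorem 3.10]{woess2000random} with no energy estimate at all, and $w\approx 1$ on $\widetilde T$ by Lemma~\ref{lemma:bounded w}. Remark~\ref{remark:transient subgraph} and Theorem~\ref{thm:transience passes to good graphs} then give transience of $M$ with positive probability. This is also why the paper first reduces to extremal $\mu$, rather than arguing almost surely as in your Step 4.
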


The proof Theorem \ref{thm:infinitely ended unimodular random surfaces are transient} will make use of the following probabilistic lemma, which depends on well-known results in the theory  unimodular random graphs and trees; see e.g. \cite{aldous2007processes,lyons2017probability}. Some of those results are stated in the context of a percolation over a fixed Cayley graph; however the same proof applies more generally to a unimodular random graph.

\begin{lemma}
\label{lemma:finding trees}
Let $\lambda \in \mathrm{Prob}(\GraphsU) $ be a unimodular random $\left[0,1\right]$-edge labeled graph. Assume that $\lambda$-almost every graph is infinitely ended and has positive labels. Then for any sufficiently small threshold $0 < s < 1$ the connected component of the base point $p$ in the subgraph of $G$ obtained by removing all edges $e \in E(G)$ with edge label $f(e) < s$ is transient for a positive $\lambda$-measure of random edge-labeled graphs $(G,p;f) \in \GraphsU$.
\end{lemma}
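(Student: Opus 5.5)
We plan to reduce the statement to standard facts about unimodular random graphs: invariant percolation, the Aldous--Lyons characterization of infinitely-ended unimodular graphs, and the fact that unimodular trees with infinitely many ends (or graphs containing such trees) are transient.

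\emph{Step 1: percolation at threshold $s$.} For $0<s<1$ let $\omega_s$ be the subgraph of $G$ keeping exactly the edges with label $f(e)\ge s$. This is a deterministic, label-measurable rule, so $(G,\omega_s)$ is an invariant (unimodular) bond percolation on $\lambda$. Since the labels are positive, $\omega_s$ increases to $G$ as $s\downarrow 0$. Hence for every finite ball $B_G(p,R)$ we have $\lambda$-a.s.\ that $B_G(p,R)\subset\omega_s$ for all sufficiently small $s$.

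\emph{Step 2: infinitely many ends survive.} Choose a spanning structure witnessing the ends. By the unimodular version of the Burton--Keane/Aldous--Lyons theory for infinitely-ended unimodular graphs (see \cite[Ch.~8]{lyons2017probability} and \cite{aldous2007processes}), $\lambda$-a.s.\ $G$ is non-amenable in the invariant sense. Concretely, we use the characterization that a unimodular graph is infinitely ended iff it carries an invariant random spanning forest whose component at the root has infinitely many ends with positive probability. Equivalently, $\lambda$-a.s.\ there is an invariant subforest whose trees have infinitely many ends, or there exist positive-probability furcation vertices (points whose removal splits the root's component into at least three infinite pieces). Furcation is a local-at-$R$ event: a vertex $v$ is an $R$-furcation if removing $B_G(v,R)$ leaves at least three infinite components meeting its boundary.

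\emph{Step 3: transfer to $\omega_s$.} This is the main obstacle. Infinite components of $G\setminus B$ need not remain infinite in $\omega_s$ for any fixed $s$. The approach we would try first is to run the argument inside $\omega_s$ directly. The quantity $\EE[\deg_{\omega_s}(p)]$ converges to $\EE[\deg_G(p)]$ by monotone convergence, assuming the finiteness available after renormalization, which the paper already arranges via Proposition~\ref{prop:modified point process}. We then invoke the Benjamini--Lyons--Peres--Schramm criterion \cite{lyons2017probability}: for a nonamenable unimodular graph, invariant percolations whose expected degree is close enough to the full expected degree have infinite clusters with infinitely many ends, and indeed with positive speed or positive expected branching. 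The needed nonamenability of $G$ follows from infinitely many ends together with unimodularity, via the invariant forest and the expected-degree gap $\EE\deg>2$ of its trees. So for $s$ small, $\lambda(\text{the cluster of } p \text{ in } \omega_s \text{ has infinitely many ends})>0$.

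\emph{Step 4: transience.} Condition on the cluster of the root in $\omega_s$ being infinitely-ended, and apply Lemma~\ref{lemma:condional unimodular} to obtain a unimodular graph. A unimodular graph with infinitely many ends contains, again by Aldous--Lyons, an invariant spanning tree with infinitely many ends and expected degree exceeding $2$. Such a tree has positive speed and is transient. By Remark~\ref{remark:transient subgraph}, the cluster itself is transient on this positive-measure event, which is what the statement asserts.
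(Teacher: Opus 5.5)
Your route differs from the paper's, and as written it has one real gap.

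\textbf{How the routes differ.} The paper never thresholds all of $G$. It first takes the invariant random subtree through $p$ given by the Lyons--Peres lemma, which is infinitely ended with positive probability. It restricts to the infinitely-ended part $\eta'$, a unimodular tree with $\EE_{\eta'}\deg(p)>2$ by Aldous--Lyons. Only then does it delete the edges with label $<s$, and it deletes them from this tree.

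Since labels are positive, $\deg_{\eta_s}(p)\uparrow\deg_{\eta'}(p)$ as $s\downarrow 0$. Monotone convergence gives $\EE_{\eta_s}\deg(p)>2$ for small $s$, and the speed formula then gives transience with positive probability. The component of $p$ in $\omega_s$ contains this tree, so Remark~\ref{remark:transient subgraph} finishes.

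Your Step 2 already hands you this forest. Thresholding the forest instead of $G$, and running the same monotone-convergence and speed argument, is the whole proof.

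You instead use the forest only to certify invariant nonamenability of $\lambda$; that inference is fine. You then need a unimodular-random-graph version of the Benjamini--Lyons--Peres--Schramm expected-degree threshold to keep the cluster of $\omega_s$ infinitely ended. Finally you invoke Aldous--Lyons a second time to find a tree inside that cluster. What this buys is a stronger, quantitative statement: the whole cluster of $p$ is infinitely ended with positive probability once $\EE\deg_{\omega_s}(p)>\EE\deg_G(p)-\iota^*_E$. The lemma does not need this.

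\textbf{The gap is in Step 3.} The threshold $\EE\deg_{\omega_s}(p)>\EE\deg_G(p)-\iota^*_E$, and the invariant isoperimetric constant itself, only make sense when $\EE_\lambda\deg_G(p)<\infty$. This is not a hypothesis of the lemma. Proposition~\ref{prop:modified point process} does not supply it either: it only shows that $\psi$ is bounded, so that $\lambda$ can be normalized.

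In the application, thick-graph vertices representing Margulis tubes about very short geodesics can have arbitrarily large degree when $n\ge 3$. So you would have to prove finiteness of the expected degree separately. One way is a mass transport from each thick vertex to the at most one tube vertex adjacent to it. The paper's ordering needs no control of $\deg_G$ at the thresholding step, because only degrees in the extracted tree enter there.

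\textbf{A minor point.} Your $R$-furcation event in Step 2 is not local, since whether the components of $G\setminus B_G(v,R)$ are infinite is not determined by a finite ball. You never use it, so it can simply be dropped.
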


Here, by \enquote{transient} we mean with respect to the simple nearest neighborhood random walk (irrespective of the edge labels).

\begin{proof}[Proof of Lemma \ref{lemma:finding trees}]
According to \cite[Lemma 7.7]{lyons2017probability} a $\lambda$-random pointed graph admits a random subtree containing the base point, and this subtree is infinitely ended with positive probability. Let $\eta$ denote the probability distribution of this random subtree. Write $\eta = t \eta' + (1-t) \eta''$ where $ t \in \left[0,1\right]$ and $\eta'$ is an \emph{infinitely-ended} unimodular random tree. The expected degree with respect to $\eta'$ at the base point $p$ of the tree satisfies $\EE_{\eta'} \deg(p) > 2$; see \cite[Theorems 6.1 and 6.2]{aldous2007processes}.  

The unimodular random tree $\eta'$ is edge-labeled; the labels are inherited from those of $\lambda$.
For each value of $0 < s < 1$  consider the unimodular random tree $\eta_s$ obtained by removing all edges $e$ with edge label $f(e) < s$ from a given $\eta'$-random tree, and then taking the connected component of the base point. We may fix $s$ to be sufficiently small so that $\EE_{\eta_s} \deg(p) >2$. The resulting random subtree is transient\footnote{In fact Proposition 4.9 of \cite{aldous2007processes} establishes the stronger statement that the speed of the simple random walk on a unimodular random tree $\eta$ with base point $p$ is given by $\mathbb{E}_{\eta} \frac{\mathrm{deg}(p)-2}{\deg (p)}$. This speed is positive if $\mathbb{E}_{\eta} \deg (p) > 2$.} with a positive $\eta_s$-probability, as follows from \cite[Proposition 4.9]{aldous2007processes}. Recall that $\eta_s$ is a random subgroup of $\lambda$. Hence a $\lambda$-random graph is transient with positive $\lambda$-probability by Remark \ref{remark:transient subgraph}.
\end{proof}


We now proceed with the following proof.

\begin{proof}[Proof of Theorem \ref{thm:infinitely ended unimodular random surfaces are transient}]
Let $\mu$ be a unimodular random hyperbolic manifold with infinitely many infinite-volume ends. We may assume without loss of generality that $\mu$ is extremal.

Let $\nu$ be a good and controlled point process over the unimodular random manifold $\mu$ (i.e. this is a random point configuration on a random manifold). Such a process exists by Corollary \ref{cor:exists good point process of unimodular}.  We know that a $\nu$-random manifold $(M,p;m)$ is transient with respect to Brownian motion if and only if the Delaunay graph $\DelanGraph (M,p;m)$ is $w$-transient with respect to a suitable edge weight function $w$ (Theorem \ref{thm:transience passes to good graphs}). However, the resulting random graph $\DelanGraph_* \nu$ is not in general unimodular. To work our way around this problem, we will consider thick graphs as an intermediate step. 

Let $\widehat{\nu}$ be the unimodular random metric measure space constructed from $\nu$ by conditioning it on the subset $\ThickHypersP$, and  $\lambda = \psi \cdot \ThickGraph_* (\widehat{\nu})$ be the random thick graph obtained from it. The random pointed graph $\lambda$ is unimodular; see Proposition \ref{prop:modified point process} and its proof for details. 

Note that  $\lambda$-almost every graph is infinitely ended by Lemma \ref{lemma:properties of thick graph}. Further, recall that $\lambda$ is regarded as a random $\left[0,1\right]$-edge-labeled graph, with positive labeling
representing the function $\tau$.  According to Lemma \ref{lemma:finding trees} there is a sufficiently small threshold $0 < s < 1$ such that the connected component of the base point $p$ in the subgraph of $G$ obtained by removing all edges $e \in E(G)$ with $f(e) < s$ is transient\footnote{By transient we mean with respect to the simple nearest neighbor random walk on the graph. By $w$-transient we mean transient with respect to the random walk corresponding to the edge labels defined by $w$.} for a positive $\lambda$-measure set of pointed $\left[0,1\right]$-edge labeled graphs $(G,p;f) \in \GraphsU$. Let $\eta_s$ denote the distribution of this transient pointed subgraph (so that $\eta_s$ is a positive measure, typically not a probability measure).

We turn our attention back to Delaunay graphs. For $\eta_s$-almost every transient subgraph $T$ of a generic thick graph $\ThickGraph(M,m)$, consider its preimage $\widetilde T$ in the corresponding Delaunay graph $\DelanGraph(M,m)$ with respect to the quotient map that was involved in Definition \ref{def:thick graph}. Namely $\widetilde{T}$ is a connected subgraph of $\DelanGraph(M,m)$ containing the base point. Observe that there is a uniform bound on the diameters of the fibers in the quotient map $\widetilde{T} \to T$ (since in the construction of $\eta_s$ we  maintain only those neighborhoods of short geodesics where $\tau > s$, and since the point process in question is controlled). In particular, this quotient map is a quasi-isometry. Hence the transience of the graph $T$ implies the transience of the graph $\widetilde T$ \cite[Theorem 3.10]{woess2000random}. As the edge weight function $w$ on the graph $\widetilde{T}$ is lower-bounded away from $0$ as well as upper bounded by $1$, we deduce from Lemma \ref{lemma:bounded w} that $\widetilde{T}$ is $w$-transient. It follows with a positive $\nu$-probability that the Delaunay graph $\DelanGraph(M,m)$ contains a $w$-transient connected subgraph $\widetilde T$ and therefore is itself $w$-transient by Remark \ref{remark:transient subgraph}. 

We conclude that $\mu$-almost every hyperbolic manifold is transient, by the above-mentioned Theorem \ref{thm:transience passes to good graphs} and by extremality.
\end{proof}

\begin{remark}
In case the unimodular random hyperbolic manifold $\mu$ admits a uniform lower bound on injectivity radius,  its Delaunay graphs are unimodular by Corollary \ref{cor:unimodular graph from unimodular space - delaunay}, and the above proof becomes much simpler. In the general case however, it may not be possible to define a \emph{finite} unimodular measure on the associated Delaunay graphs. This is where thick graphs \enquote{truncated} to a certain threshold become  useful.
\end{remark}

As a consequence of the above statement and its proof, we  obtain the following.

\begin{cor}
\label{cor:thick part is infinitely ended}
Let $\mu$ be a unimodular random hyperbolic manifold with infinitely many infinite-volume ends. There is a sufficiently small  $\varepsilon_\mu > 0$ such that with positive  $\mu$-probability the pointed manifold $(M,p) \in \Hypers$ has $\mathrm{InjRad}_M(p) \ge \varepsilon_\mu$ and  the connected component of the $\varepsilon_\mu$-thick part containing the base point $p$ is an infinitely-ended manifold (with boundary).
\end{cor}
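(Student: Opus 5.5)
The plan is to extract the corollary from the proof of Theorem \ref{thm:infinitely ended unimodular random surfaces are transient}, specifically from the threshold $s$ supplied by Lemma \ref{lemma:finding trees}. As there, let $\nu$ be a good controlled point process over $\mu$ and $\widehat\nu$ its conditioning on $\ThickHypersP$. Let $\lambda = \psi\cdot\ThickGraph_*(\widehat\nu)$ be the unimodular thick graph of Proposition \ref{prop:modified point process}, which is infinitely ended by Lemma \ref{lemma:properties of thick graph}. Instead of applying Lemma \ref{lemma:finding trees}, I would use its proof. That proof yields a threshold $s$ and an infinitely-ended subtree $T$ of the thick graph containing the base vertex, all of whose edges carry label $f(e)\ge s$, with positive $\lambda$-probability. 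The conclusion I want is infinitely many ends rather than transience, and I would take it directly from the tree $\eta_s$. Since $\mathbb{E}_{\eta_s}\deg(p)>2$, the unimodular tree $\eta_s$ has infinitely many ends with positive probability by \cite{aldous2007processes}.

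Next I set $\varepsilon_\mu$ to be a small multiple of $\min\{s,\varepsilon_n\}$. Every edge of $T$ has both endpoints with $\tau_M\ge s$. I then lift $T$ to $\widetilde T\subset \DelanGraph(M,m)$ as in the proof of the theorem. Every vertex of $\widetilde T$ then lies either in the thick part or in a short-geodesic tube whose injectivity radius is bounded below by $s$, so its injectivity radius is at least $\min\{s,\varepsilon_n\}$. By \ref{good: length}, Delaunay edges have length at most $\frac14\mathrm{InjRad}$ at their endpoints. Hence the injectivity radius varies by a bounded factor along each geodesic edge, and the realization of $\widetilde T$ in $M$ stays inside the $\varepsilon_\mu$-thick part, as does a uniform tubular neighbourhood of it. The quotient $\widetilde T\to T$ has uniformly bounded fibres, so it is a quasi-isometry and $\widetilde T$ is also infinitely ended.

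The main obstacle is passing from ends of $\widetilde T$ to ends of the component $X$ of $M_{\ge\varepsilon_\mu}$ containing $p$. An infinitely-ended subgraph embedded in $X$ need not force $X$ to be infinitely ended, because distinct ends of the tree could be joined far away inside $X$. I would first try to avoid this issue by working with the thick graph restricted to labels $\ge s$, with $s$ itself taken as the truncation level. Let $G_s$ be the graph obtained by removing the edges with label $<s$, and let $X_s$ be the component of the $s'$-thick part containing $p$, for a suitable $s'\approx s$. As in the proof of Lemma \ref{lemma:properties of thick graph}, the union of the Voronoi cells at the vertices of the component of $G_s$ is coarsely equivalent to $X_s$, so that $\Ends{X_s}\cong\Ends{\text{component of }G_s}$. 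That component contains the infinitely-ended tree $T$ together with its closed walks, and a unimodular graph containing an infinitely-ended unimodular subtree is non-amenable in the appropriate sense. By the Biringer--Raimbault dichotomy applied to the conditioned component (Theorem \ref{thm:Biringer--Raimbault}), it therefore has $0$, $1$, $2$ or infinitely many ends. Ruling out $1$ and $2$ ends uses the expected degree $>2$ forest inside it, which forces positive speed, while a one- or two-ended unimodular graph with such a subforest would be amenable.

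Finally, I transfer positivity from $\widehat\nu$ back to $\mu$. Being in $\ThickHypersP$ with $\mathrm{InjRad}_M(p)\ge\varepsilon_\mu$ is an event of positive probability under $\widehat\nu$, and $\widehat\nu$ is absolutely continuous with respect to the pushforward of $\nu$. The event depends only on $(M,p)$, so Lemma \ref{lemma: everything shows up at the root} gives positive $\mu$-probability.
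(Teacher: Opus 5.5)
There is a genuine gap, and it sits exactly at the obstacle you flagged yourself. Your first two steps match what the paper's proof points to. That proof consists only of a reference to Theorem~\ref{thm:infinitely ended unimodular random surfaces are transient} and its proof, namely the $s$-truncated infinitely-ended tree from the proof of Lemma~\ref{lemma:finding trees}, lifted into the $\varepsilon$-thick part. You are right that this alone does not make the thick component $X$ containing $p$ infinitely ended.

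The step you use to close the gap is false, however. After passing to the root component $C_s$ of $G_s$ and invoking the $0/1/2/\infty$ dichotomy, you exclude one end by claiming that a one-ended unimodular graph containing an invariant subforest of expected degree $>2$ (hence with positive speed) must be amenable. Counterexamples:
\begin{itemize}
\item The Cayley graph of $F_2\times\ZZ$ with generators $a,b,t$ is one-ended. Its $\{a,b\}$-edges form an invariant spanning forest of $4$-regular trees, and simple random walk on it has positive speed.
\item In your own setting, $\HH^n$ has one-ended thick part, while the Delaunay graph of a controlled point process on it is unimodular, one-ended and non-amenable.
\end{itemize}
Invariant non-amenability, positive speed and a degree-$>2$ subforest carry no information about the number of ends. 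So the proposal leaves open precisely the scenario you worried about. In that scenario $X$ (equivalently $C_s$) is a one-ended, $\HH^n$-like piece in which the ends of $\widetilde T$ merge, and the ends of $M$ are reached only across geodesics of length $<2s$ bounding $X$.

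What is missing is an argument tying the ends of $X$ to the ends of $M$ itself, rather than to a subtree. In dimension $n\ge 3$ this is essentially topological. For $\varepsilon$ below the Margulis constant, every component of the $\varepsilon$-thin part (tube or cusp) has connected boundary, so $M_{\ge\varepsilon}$ is connected. An argument as in the proof of Lemma~\ref{lemma:properties of thick graph} should then inject $\EndsInf{M}$ into the ends of $M_{\ge\varepsilon}$, with no trees or transience needed.

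In dimension $2$, short separating geodesics can genuinely cut the thick part. One must then use unimodularity, together with the fact that these cuts become sparse as $\varepsilon\to 0$, to rule out that for every $\varepsilon$ all pieces have at most two ends. Some such input is unavoidable. Consider a deterministic surface made of pairs of pants glued along a trivalent tree, with cuff lengths $2^{-\mathrm{depth}}$. It has a Cantor set of ends, yet every component of each of its $\varepsilon$-thick parts is compact.
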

\begin{proof}
This is a consequence of the previous Theorem    \ref{thm:infinitely ended unimodular random surfaces are transient} and its proof.
\end{proof}


\subsection{The one-ended case}
Our methods do not permit us to say much in the general and ubiquitous one-ended case. We do obtain the following criterion, which is essentially a reformulation of Theorem \ref{thm:main:general transient condition} of the introduction and of Theorem \ref{thm:transience passes to good graphs} above.

\begin{theorem}
\label{thm:criterion in one-ended case}
Let $M$ be an $n$-dimensional hyperbolic manifold and $\nu$ be a good point process on $M$. Then $M$ is transient with respect to Brownian motion if and only if the Delaunay graph $\DelanGraph(M,m)$ is $\nu$-almost surely $w$-transient with respect to the edge weight function $w = d_M^{n-2}$.
\end{theorem}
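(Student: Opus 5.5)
This theorem should follow almost immediately from Theorem \ref{thm:transience passes to good graphs}, applied configuration by configuration. The plan is to show that for $\nu$-almost every configuration $m$, the $w$-transience of $\DelanGraph(M,m)$ is equivalent to the transience of $M$. Since the latter is a deterministic property of $M$ alone, the event ``$\DelanGraph(M,m)$ is $w$-transient'' then has $\nu$-probability either $0$ or $1$. The word ``almost surely'' in the statement is therefore harmless.

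\emph{Step 1.} By definition of a good point process, there is a fixed $\varepsilon>0$ such that $\nu$-almost every $m$ is $\varepsilon$-good. Fix such an $m$. Theorem \ref{thm:transience passes to good graphs} applies to the pair $(M,m)$ verbatim. Note that $w = w_{\mathrm{dist}}^{n-2} = d_M^{n-2}$ on edges. This gives: $M$ is transient if and only if $\DelanGraph(M,m)$ is $w$-transient. Here $w$-transience is understood in the sense of Theorem \ref{theorem:capacity for graphs}, that is, positive capacity of a finite set.

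\emph{Step 2.} Suppose $M$ is transient. By Step 1, every $m$ in the full-measure set of good configurations yields a $w$-transient Delaunay graph, so $\DelanGraph(M,m)$ is $\nu$-almost surely $w$-transient. Conversely, suppose $\DelanGraph(M,m)$ is $w$-transient $\nu$-almost surely. Since $\nu$ is a probability measure, the set of good $m$ with $w$-transient Delaunay graph is nonempty, so pick one such $m$. Then Step 1 gives that $M$ is transient.

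\emph{Main obstacle.} There is essentially no new difficulty here. The only point to verify is measurability: the set of $m$ for which $\DelanGraph(M,m)$ is $w$-transient must be $\nu$-measurable, so that ``$\nu$-almost surely'' makes sense. I would handle this by writing $w$-transience as positivity of the capacity of a fixed finite set. That capacity is an infimum over finitely supported functions, which can be reduced to a countable family by rational approximation. I would also use that the Delaunay graph and its edge lengths depend measurably on $m$, as already assumed for the map $\DelanGraph$ in \S\ref{sec:poisson point processes}.
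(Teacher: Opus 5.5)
Your proposal is correct and follows the paper's approach: the paper presents this theorem as a direct reformulation of Theorem~\ref{thm:transience passes to good graphs}, applied to $\nu$-almost every (hence $\varepsilon$-good) configuration $m$, exactly as in your Steps~1 and~2. Your measurability concern is milder than you suggest: on the full-measure set of good configurations the event is either all of it or empty, so it is measurable for the completion of $\nu$ with no further argument.
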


The same necessary and sufficient transience criterion applies in the probabilistic (rather than deterministic) setting. Namely, if $\mu$ is a unimodular random hyperbolic manifold and $\nu$ is a good point process over $\mu$ then $\mu$-almost every manifold $M$ is transient with respect to Brownian motion if and only if the Delaunay graphs $\DelanGraph(G,m)$ are almost surely $w$-transient where $w=d_M^{n-2}$.

We remark that a good point process  always process exists by Theorem \ref{thm:exists good point process} in the deterministic case and Corollary \ref{cor:exists good point process of unimodular} in the probabilistic case.

\section{The geodesic flow and the exit map}
\label{sec:ends and entropy}

For the purpose of this section, we find it more convenient to work with the dynamics of the geodesic flow rather than Brownian motion. These two notions are geometrically related. First, the Brownian motion is transient if and only if the geodesic flow is \cite{sullivan1981ergodic}. Second, a generic sample path of  Brownian motion sublinearly tracks a geodesic ray in $\HH^n$ \cite{blachere2011harmonic}. 
Lastly, the hitting measure induced by Brownian motion on the Gromov boundary $\partial \HH^n$ is equivalent to the Lebesgue measure \cite[Appendix B]{quint2006overview}. 
Hence, in the transient case, the Brownian motion and and the geodesic flow starting at a given base point induce equivalent hitting measures on the boundary at infinity.

We remark that by the classic Hopf--Tsuji--Sullivan dichotomy \cite{hopf1971ergodic,tsuji1959potential}, the geodesic flow on a given hyperbolic manifold is recurrent if and only if it is ergodic.

\subsection{The geodesic flow}
\label{sub:geodesic}

Consider the space $\HypersV$ of pointed hyperbolic manifolds with a unit tangent vector, namely
$$ \HypersV = \left\{(M,p,\vec v) \: : \: \begin{varwidth}{\linewidth}
\center
$(M,p) \in \Hypers$ and $\vec v$ is a unit tangent\\
vector to $M$ at the point $p$ 
\end{varwidth}\right\}.
$$
Given any  $(M,p,\vec v) \in \HypersV$ we let $ \gamma_t(M,p,\vec v) \in \HypersV$ denote the resulting geodesic flow, defined for all $t \in \RR$. 

A unimodular random hyperbolic manifold $\mu$ admits a canonical lift  $\vec \mu$ to a probability measure on $ \HypersV$. The measure $\vec \mu$ pushes forward to $\mu$ under the forgetful map $\HypersV \to \Hypers$ and disintegrates to the Lebesgue measure on the unit tangent sphere  over $\mu$-almost every point $(M,p) \in \Hypers$. 

\begin{lemma}
\label{lemma:vec mu is geodesic flow invariant}
Let $\mu$ be a unimodular random hyperbolic manifold. The probability measure $\vec \mu$ is invariant under the action induced by the geodesic flow.
\end{lemma}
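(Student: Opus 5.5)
The plan is to reduce invariance of $\vec\mu$ under the geodesic flow to the mass transport principle (Lemma \ref{lemma:MTP}). We use the fact that the geodesic flow on a fixed hyperbolic manifold $M$ preserves the Liouville measure on $T^1M$, which locally is $\vol_M\times$(Lebesgue measure on unit spheres). Fix $t\in\RR$ and a bounded Borel function $h:\HypersV\to[0,\infty)$. The goal is to show
$$\int h(\gamma_t(M,p,\vec v))\,\dd\vec\mu(M,p,\vec v) = \int h(M,p,\vec v)\,\dd\vec\mu(M,p,\vec v).$$

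The first step is to define a transport function on $\MM^2$ (really on $\Hypers^2$) by
$$F(M,p,q)=\int_{T^1_pM} h(\gamma_t(M,p,\vec v))\,\frac{\#\{\text{geodesics of length } |t| \text{ from } p \text{ with endpoint } q\}\text{-density}}{}\,\dd\vec v .$$
To make this precise, avoid densities and instead smear. Let $\phi_\varepsilon\ge0$ be a bump function of $d_M(p',q)$, normalized so that $\int_M\phi_\varepsilon(d(x,q))\,\dd\vol_M(x)=1$ for all $q$; this normalization is where one should restrict to $\varepsilon$ below the injectivity radius, or simply normalize pointwise. Then set
$$F(M,p,q)=\int_{T^1_pM}h(\gamma_t(M,p,\vec v))\,\phi_\varepsilon\big(d_M(\gamma_t(p,\vec v),q)\big)\,\dd\vec v.$$

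The second step computes the two sides of $L_\mu(F)=R_\mu(F)$. Integrating over $q$ first gives $L_\mu(F)=\int h\circ\gamma_t\,\dd\vec\mu$ (up to the normalization). For $R_\mu(F)=\int\dd\mu(M,q)\int_M F(M,p,q)\,\dd\vol_M(p)$, the inner integral is $\int_{T^1M}h(\gamma_t\xi)\,\phi_\varepsilon(d(\pi\gamma_t\xi,q))\,\dd L(\xi)$, where $L$ is Liouville measure. Since $\gamma_t$ preserves $L$ (a classical fact; it is Hamiltonian flow), this equals $\int_{T^1M}h(\xi)\phi_\varepsilon(d(\pi\xi,q))\,\dd L(\xi)$. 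As $\varepsilon\to0$ this tends to $\int_{T^1_qM}h\,\dd\vec v$ for continuous $h$. Hence $R_\mu(F)\to\int h\,\dd\vec\mu$, and the same limit applied to $L_\mu(F)$ gives the identity for continuous bounded $h$, which suffices by regularity of measures. Measurability of $F$ on $\MM^2$ has to be checked but is routine, since everything is defined from the pointed geometry.

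The main obstacle is exactly this local averaging: making $\phi_\varepsilon$ canonically defined, measurable, and normalized on arbitrary manifolds whose injectivity radius may be tiny, and justifying the passage $\varepsilon\to0$. A simpler route I would try first is to avoid smoothing altogether. Choose $F(M,p,q)=\int_{T^1_pM}h(\gamma_t(p,\vec v))\,g(M,q)\,\dd\vec v$, with $g$ chosen so that, after disintegrating $L$, one uses the Borel partition into Voronoi cells of an independent Poisson process (Proposition \ref{prop:there exists a Poisson point process}) as a measurable "unit cell" device. Then apply Liouville invariance cellwise.
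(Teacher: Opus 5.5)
Your argument is correct, and it takes a genuinely different route from the paper. The paper lifts $\mu$ to the frame bundle, viewed as the Chabauty space of discrete torsion-free subgroups of $G=\mathrm{Isom}^+(\HH^n)$. It then quotes the correspondence between unimodular random manifolds and invariant random subgroups \cite[Theorem 1.4]{biringer2017unimodularity} to get invariance under conjugation by $G$, restricts to the Cartan subgroup $A$ generating the geodesic flow, and pushes forward along the $A$-equivariant map to $\HypersV$. You use only the mass transport principle and Liouville's theorem on each fixed manifold. The paper's proof is very short, but it rests on a substantial external theorem and on the homogeneous structure of the frame bundle of a hyperbolic manifold. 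Yours is self-contained and applies verbatim to unimodular random complete Riemannian manifolds; in effect it reproves the part of that correspondence needed here.

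The obstacle you flag at the end is not real, and the limit $\varepsilon\to 0$ can be dropped entirely. Normalize the kernel at the landing point instead of at $q$. Take $\phi(x,q)=\mathbf{1}_{B_M(x,1)}(q)/\vol_M(B_M(x,1))$, which is canonical and Borel, and set $F_t(M,p,q)=\int_{T^1_pM}h(\gamma_t(p,\vec v))\,\phi(\pi\gamma_t(p,\vec v),q)\,\dd\vec v$ for Borel $h\ge 0$. The identity $\int_M\phi(x,q)\,\dd\vol_M(q)=1$ gives $L_\mu(F_t)=\int h\circ\gamma_t\,\dd\vec\mu$ exactly. Liouville invariance inside the $\vol_M(p)$-integral gives $R_\mu(F_t)=R_\mu(F_0)$. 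A second use of the mass transport principle gives $R_\mu(F_0)=L_\mu(F_0)=\int h\,\dd\vec\mu$. Hence
\[
\int h\circ\gamma_t\,\dd\vec\mu=L_\mu(F_t)=R_\mu(F_t)=R_\mu(F_0)=L_\mu(F_0)=\int h\,\dd\vec\mu
\]
for every Borel $h\ge 0$. This needs no continuity assumption on $h$ and no control of the injectivity radius. Your limiting version also works, but dominated convergence on the $L_\mu$ side then needs a uniform bound on the normalizing factor, e.g.\ from Bishop--Gromov.

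Finally, the Voronoi variant in your last paragraph does not work as written. The function $F=\big(\int_{T^1_pM}h\circ\gamma_t\,\dd\vec v\big)\,g(M,q)$ does not tie $q$ to the landing point $\pi\gamma_t(p,\vec v)$, so neither side of $L_\mu(F)=R_\mu(F)$ reduces to the integrals you want. Any such device must be a kernel in $(\pi\gamma_t(p,\vec v),q)$ normalized in $q$, and then the ball kernel above is simpler.
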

\begin{proof}
The Chabauty space $\DSubtf{G}$ of discrete torsion-free subgroups of the simple Lie group $G = \mathrm{Isom}^+(\mathbb H^n) \cong \mathrm{PSO}(n,1)$ can be identified with the space of $n$-dimensional hyperbolic manifolds equipped with a base point and a tangent frame. 
Hence the space $\HypersV$ is naturally a quotient of the space $\DSubtf{G}$.
Let $\widetilde \mu$ be the canonical lift of $\mu$ to the space $\DSubtf{G}$, so that $\widetilde \mu$ pushes forward to $\mu$ under the forgetful map $\DSubtf{G} \to \Hypers$ and the  measure on $\mu$-almost fiber is invariant under the transitive action of the compact Lie group $\mathrm{SO}(n)$ on frames. 
The measure $\widetilde \mu$ is a unimodular random metric measure space, supported on homogeneous spaces of the form  $G/\Gamma$ equipped with the Haar measure. Hence the measure $\widetilde \mu$ is $G$-invariant under the natural conjugation action of $G$ on the space $\DSubtf{G}$, in other words, $\widetilde \mu$ is  an invariant random subgroup \cite[Theorem 1.4]{biringer2017unimodularity}. In particular $\widetilde \mu$ is invariant under the action of the Cartan subgroup $A$ corresponding to geodesic flow. Therefore the measure $\vec \mu$, which is obtained by pushing forward $\widetilde \mu$ via the $A$-equivariant map $\DSubtf{G} \to \HypersV$,  is  invariant under the geodesic flow. 
\end{proof}

Consider the following space 
$$\HypersV^{2} = \left\{(M,(p,\vec v),(q,\vec w)) \: :\: \text{\begin{varwidth}{\linewidth}
\center
    $(M,p,\vec v),(M,q,\vec w)\in \HypersV$ are tangent \\to the same directed geodesic on $M$
\end{varwidth}}\right\}.$$
The following lemma establishes a property which can be regarded as a \enquote{mass transport principle along the geodesic flow}.

\begin{lemma}
\label{lemma:MTP along geodesics}
Let $\mu$ be a unimodular random hyperbolic manifold. The two measures $L_{\vec \mu}$ and $R_{\vec \mu}$ on the space $\HypersV^2$ given by
$$ \mathrm{d}L_{\vec \mu}(M,(p,\vec v),\gamma_t(p,\vec v)) = \mathrm{d}\vec \mu(M,p,\vec v) \, \mathrm{d}\mathrm{Leb}_{\RR}(t) $$
and
$$ \mathrm{d}R_{\vec \mu}(M,\gamma_t(q,\vec w),(q,\vec w)) = \mathrm{d}\vec \mu(M,q,\vec w) \, \mathrm{d}\mathrm{Leb}_{\RR}(t) $$
satisfy $L_{\vec \mu} = R_{\vec \mu}$.
\end{lemma}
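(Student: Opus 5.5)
The plan is to reduce the statement to the invariance of $\vec\mu$ under the geodesic flow (Lemma \ref{lemma:vec mu is geodesic flow invariant}) together with the translation invariance of Lebesgue measure on $\RR$. Both $L_{\vec\mu}$ and $R_{\vec\mu}$ are images of the same product measure $\vec\mu\otimes\mathrm{Leb}_\RR$ on $\HypersV\times\RR$, under two different maps into $\HypersV^2$. Namely,
$$\Phi_L(M,p,\vec v,t)=(M,(p,\vec v),\gamma_t(p,\vec v)) \quad\text{and}\quad \Phi_R(M,q,\vec w,t)=(M,\gamma_t(q,\vec w),(q,\vec w)).$$
It therefore suffices to find a measure-preserving bijection $S$ of $(\HypersV\times\RR,\vec\mu\otimes\mathrm{Leb})$ with $\Phi_R\circ S=\Phi_L$.

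The natural candidate is $S(M,p,\vec v,t)=(\gamma_t(M,p,\vec v),-t)$. Indeed $\Phi_R(\gamma_t(p,\vec v),-t)=(M,\gamma_{-t}\gamma_t(p,\vec v),\gamma_t(p,\vec v))=\Phi_L(M,p,\vec v,t)$, using the flow property $\gamma_{-t}\circ\gamma_t=\mathrm{id}$. The map $S$ is an involution, hence a Borel bijection, and it is continuous since the geodesic flow on $\HypersV$ is continuous.

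The main step is to verify that $S$ preserves $\vec\mu\otimes\mathrm{Leb}$. For a Borel function $h\geq 0$ on $\HypersV\times\RR$, Tonelli's theorem gives
$$\int h\circ S \,\mathrm{d}(\vec\mu\otimes\mathrm{Leb}) = \int_\RR\int_{\HypersV} h(\gamma_t x,-t)\,\mathrm{d}\vec\mu(x)\,\mathrm{d}t .$$
By Lemma \ref{lemma:vec mu is geodesic flow invariant}, the inner integral equals $\int h(x,-t)\,\mathrm{d}\vec\mu(x)$ for each fixed $t$. The reflection $t\mapsto -t$ then preserves Lebesgue measure, so the whole expression equals $\int h\,\mathrm{d}(\vec\mu\otimes\mathrm{Leb})$. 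Applying this with $h=g\circ\Phi_L$ for an arbitrary Borel $g\ge 0$ on $\HypersV^2$ yields $L_{\vec\mu}(g)=R_{\vec\mu}(g)$.

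The only potential obstacle is measure-theoretic bookkeeping: checking joint Borel measurability of $(x,t)\mapsto\gamma_t x$, so that Tonelli applies, and checking that $\Phi_L$ and $\Phi_R$ land in $\HypersV^2$ with the displayed formulas describing the measures. Joint measurability follows from continuity of the flow on $\HypersV$, which can be seen via the $A$-action on $\DSubtf{G}$ used in the proof of Lemma \ref{lemma:vec mu is geodesic flow invariant}. Both $L_{\vec\mu}$ and $R_{\vec\mu}$ are $\sigma$-finite, being images of the $\sigma$-finite measure $\vec\mu\otimes\mathrm{Leb}$. One subtlety is closed geodesics, where $\Phi_L$ is not injective. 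This causes no problem, because the measures are defined as pushforwards, and the identity $\Phi_R\circ S=\Phi_L$ holds pointwise regardless of injectivity.
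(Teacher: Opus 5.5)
Your proof is correct and follows the paper's route. The paper derives the lemma from geodesic-flow invariance of $\vec\mu$ (Lemma~\ref{lemma:vec mu is geodesic flow invariant}) together with the general Fubini fact that left and right measures on the orbit relation of a measure-preserving action of a unimodular group coincide, and your involution $S(x,t)=(\gamma_t x,-t)$ on $\HypersV\times\RR$ is exactly that Fubini verification written out for $\RR$. Working with pushforwards from $\HypersV\times\RR$, rather than identifying $\HypersV^2$ with the orbit relation, also cleanly handles the non-injectivity along closed geodesics, which the paper's phrasing glosses over.
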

 \begin{proof}
 We know from the previous Lemma \ref{lemma:vec mu is geodesic flow invariant} that the measure $\vec \mu$ is invariant under the $\RR$-action corresponding to the geodesic flow. The space $\HypersV^2$ can be identified with the orbit equivalence relation for this action. It is a general statement in measure theory, that given a probability measure preserving action of a unimodular group, the \enquote{left} and \enquote{right} measures on the orbit equivalence relation coincide (as can be easily verified by means of Fubini's theorem). The desired result follows. 
\end{proof}

\subsection{The exit map}
\label{sub:exit map}

Consider a \emph{transient} hyperbolic manifold $M$. By the discussion in the beginning of \S\ref{sec:ends and entropy}, and by definition of the space of ends $\Ends{M}$, this means that for almost every base point and unit tangent vector $(p,v) \in T^1M$ the trajectory of the geodesic flow $\gamma_t(p,v)$  on $M$ converges to some end in $\Ends{M}$. This observation allows us to introduce the following.


\begin{definition}
\label{def:exit map}
Let $M$ be a transient hyperbolic manifold. The \emph{exit map} is a measurable map
$$\zeta_M : T^1M \to \Ends{M}$$ 
taking almost every base point and unit tangent vector $(p,\vec v) \in T^1 M$ to the end $\zeta_M(p,\vec v) \in \Ends{M}$ towards which the geodesic trajectory $\gamma_t(p,\vec v)$ accumulates as $t \to \infty$. Given \emph{any} base point $p \in M$ we may restrict the exit map $\zeta_M$ to the unit tangent sphere $T^1_pM$ and the obtain the measurable map $$\zeta_{M,p} : T^1_p M \to \Ends{M}.$$
\end{definition}

A finite-volume end of a hyperbolic manifold is a cusp. The subset of the boundary at infinity $\partial \HH^n$ corresponding to geodesics accumulating to such an end is countable. This shows that the image of the exit map $\zeta_M$ (or the map $\zeta_{M,p}$ for any point $p \in M$) is almost surely an infinite-volume end. Namely $\zeta_M : T^1 M \to \EndsInf{M}$ up to measure zero.

Let $p,q \in M$ be any two points on the same transient manifold $M$. The two probability measures on $\EndsInf{M}$ obtained by pushing forward the uniform probability measures on the respective unit tangent spheres $T^1_pM$ and $T^1_qM$ via the exit maps $\zeta_{M,p}$ and $\zeta_{M,q}$ are in the same measure class. In particular, the map $\zeta_{M,p}$ is essentially constant if and only if the map $\zeta_{M,q}$ is. 

\begin{remark}
\label{rmk:exit map from basepoint}
Let $\mu$ be a unimodular random hyperbolic manifold such that $\mu$-almost every manifold $M$ is transient.
By the \enquote{everything shows up at the root} principle (Lemma \ref{lemma: everything shows up at the root}) the following statements are equivalent:
\begin{itemize}
\item $\mu$-almost every pointed manifold $(M,p)$  admits a unique infinite-volume end $\zeta \in \EndsInf{M}$ such that $\zeta_M(M,q,\vec u) = \zeta$ for every base point and almost every unit tangent vector $(q,\vec u) \in M$, and
\item the exit map $\zeta_{M,p}$ is essentially constant.
\end{itemize}
\end{remark}

\begin{theorem}
\label{thm:random walk does not converge to a single end}
Let $\mu$ be a unimodular random hyperbolic manifold.  If $\mu$-almost every pointed manifold $(M,p)$ is transient and  satisfies $|\EndsInf{M}| > 1$ then for $\mu$-almost every pointed manifold $(M,p)$ the exit map $\zeta_{M,p} : T^1_p M \to \EndsInf{M}$  is not essentially constant.
\end{theorem}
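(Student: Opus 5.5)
We argue by contradiction. Suppose that with positive $\mu$-probability the exit map $\zeta_{M,p}$ is essentially constant. The event is measurable and, by Remark \ref{rmk:exit map from basepoint}, independent of the base point. So by the conditioning lemma (Lemma \ref{lemma:condional unimodular}), or by passing to an extremal component, we may assume it holds $\mu$-almost surely. Then $\mu$-almost every $M$ carries a distinguished infinite-volume end $\zeta^*(M)\in\EndsInf{M}$: almost every geodesic from almost every point of $M$ exits through $\zeta^*(M)$. Since $|\EndsInf{M}|>1$, there is another infinite-volume end $\zeta'\neq\zeta^*$. We aim to show that a unimodular space cannot have all its \emph{forward} geodesic mass drift into one end while a second end of infinite volume exists.

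The key tool is the mass transport principle along geodesics (Lemma \ref{lemma:MTP along geodesics}) together with time reversal. The map $(M,p,\vec v)\mapsto(M,p,-\vec v)$ preserves $\vec\mu$, since the disintegration over each point is Lebesgue measure on the sphere. Hence, for $\vec\mu$-almost every vector, the backward geodesic also exits through $\zeta^*$. So almost every geodesic is bi-infinite with both ends in $\zeta^*$.

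Next we choose a compact set $Q$ separating $\zeta^*$ from $\zeta'$. Let $V'\in\mathcal U(Q)$ be the neighborhood of $\zeta'$; it has infinite volume and is disjoint from the neighborhood of $\zeta^*$. To make this measurable and equivariant, we choose $Q$ canonically, say as the smallest ball around the root that separates them; any measurable choice will do. A point $q\in V'$ with a tangent vector $\vec u$ sends its geodesic, in both directions, out of $V'$ through the compact set $Q$. Now transport mass from $(M,q,\vec u)$, for $q$ deep in $V'$ at distance $R$ from $Q$, to the first crossing point of $Q$ along the flow. Via Lemma \ref{lemma:MTP along geodesics}, the flow-invariance of $\vec\mu$ (Lemma \ref{lemma:vec mu is geodesic flow invariant}) forces the following: the Liouville measure of vectors in $V'$ equals $\int(\text{exit time from }V')$ integrated against the flux measure through $\partial V'$. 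This is a Kac/Poincaré-type identity. The flux through the compact boundary is finite, while $\vol(V')=\infty$. So the expected time spent in $V'$ per crossing must be infinite, i.e. infinitely many geodesics crossing into $V'$ stay there forever or return only after unbounded time. Since almost every geodesic leaves $V'$ in finite time, we get a contradiction.

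The main obstacle is making the last step rigorous in the unimodular framework. The end $\zeta'$ and the set $Q$ depend on $M$ and on the root, and the Kac identity has to be phrased as a genuine mass transport on $\HypersV^2$, not on a single deterministic manifold. For a single deterministic infinite-volume manifold the Kac argument is exactly the Hopf-type statement that a dissipative flow can still have finite flux, so unimodularity must really be used. My first attempt is the following. Define the transport $F((p,\vec v),(q,\vec w))=1$ if $(q,\vec w)=\gamma_t(p,\vec v)$ with $t>0$, the geodesic segment between them lies in a neighborhood $V'$ of an end other than $\zeta^*$, and the segment has length at most $T$ after which it exits. Then compare the $L$ and $R$ sides, using that $\mu$-almost surely $\vol(V')=\infty$ near the root with positive probability (by Lemma \ref{lemma: everything shows up at the root}). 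If this is too delicate, I would fall back on a mass transport on the thick graph of Proposition \ref{prop:modified point process}, in the spirit of the graph argument of \cite{angel2016unimodular}.
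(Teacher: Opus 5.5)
\textbf{There is a genuine gap, at the decisive Kac step.} Your reduction to the case where $\zeta_{M,p}$ is almost surely essentially constant is correct. So is the time-reversal step showing that almost every geodesic is unipolar; it coincides with the first step of the paper's two-ended argument. The problem comes next. Let $\Phi$ be the inward flux measure on $\partial V'$ (finite, since $\partial V'$ is compact) and $\tau$ the exit time from $V'$. Santal\'o's formula $\vol(T^1V')=\int\tau\,d\Phi=\infty$ only says that $\tau$ is not $\Phi$-integrable. That is fully compatible with $\tau<\infty$ $\Phi$-almost everywhere, which is exactly the alternative ``return only after unbounded time'' that you list and then drop.

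Nothing better can come from this identity, because it lives on a single manifold. Glue a funnel to one half of a $\ZZ$-cover of a closed surface. The result is a transient surface in which almost every geodesic exits through the funnel, while the other end has infinite volume and compact boundary.

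In the unimodular framework the infinite volume of $V'$ does not even enter, because $\vec\mu$ is a probability measure. Transports within single excursions into $V'$ (residual time before exiting versus time elapsed since entering) are balanced automatically by flow invariance. Nor is ``any measurable choice'' of $Q$ available. A root-independent choice of a nonempty compact set is ruled out by the mass transport principle on an infinite-volume unimodular manifold. A root-dependent choice is not attached to the geodesic line, so it cannot be fed into Lemma \ref{lemma:MTP along geodesics}. As you suspect, unimodularity is never actually used.

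\textbf{What is missing is a mechanism that does use it, and the paper needs two different ones.}

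\emph{Two-ended case.} Unipolarity is used to select, equivariantly under the flow, a nonempty \emph{bounded} set on every geodesic line. Let $\rho_M(q)$ be the least radius for which $M\setminus B(q,\rho)$ has two infinite-volume components. Take the points of the line with $\rho_M\le\tau:=\inf_{\mathrm{line}}\rho_M+1$ and order them by domination: $x\succ y$ if the $\tau$-balls are disjoint and $M_x^+\supset M_y^+$, where $M^+$ is the side containing the common exit end. Unipolarity forces far-away points in \emph{both} directions to be dominated. Hence the non-dominated points form a nonempty bounded set, and the point $\omega_M$ at its supremal time depends only on the oriented line. Sending unit mass to within time $1$ of $\omega_M$ gives $L_{\vec\mu}(F)=2$ but $R_{\vec\mu}(F)=\infty$, contradicting Lemma \ref{lemma:MTP along geodesics}.

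\emph{Infinitely-ended case.} No geodesics are needed, since a distinguished end is already incompatible with unimodularity. Transport from $x$ to every $y$ with $R\le d_M(x,y)\le R+1$ such that $B(y,r)$ cuts $x$ off from the distinguished end. All such $y$ lie within $2r+1$ of one another, so each point sends at most $\vol_{\HH^n}B(2r+1)$. By Corollary \ref{cor:thick part is infinitely ended}, however, the root receives at least $N\vol_{\HH^n}B(\varepsilon)$ with probability bounded below, for arbitrarily large $N$.

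\emph{The split is genuine.} With two ends, a distinguished end alone is not contradictory: a $\ZZ$-cover of a closed manifold whose ends are not exchanged by any isometry carries one. So in that case the geodesic structure must be exploited. Your thick-graph fallback is only named. To get past this obstruction it would have to encode geodesic exit behaviour, not just the end structure.
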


Interestingly, we are obliged to provide  two different proofs for Theorem \ref{thm:random walk does not converge to a single end}, in the two-ended and the infinitely-ended cases (these two cases cover  all the possibilities by Theorem \ref{thm:Biringer--Raimbault}).

\begin{proof}[Proof of Theorem \ref{thm:random walk does not converge to a single end} in the infinitely-ended case]
Let $\mu$ be unimodular random hyperbolic manifold. Assume towards contradiction that $\mu$-almost every pointed hyperbolic manifold $M$ is transient, satisfies $|\EndsInf{M}| = \infty$,  and admits a unique  infinite-volume end $\zeta_M \in \EndsInf{M}$ such that the geodesic flow trajectory starting at every base point and almost every unit tangent vector $(q,\vec u) \in T^1M$ accumulates to $\zeta_M$.

For each $R > r > 0$ let $E_{r,R}$ be the following Borel set:
$$ E_{r,R} = \left\{ (M,x,y) \in \Hypers^{2} \: : \: 
\vcenter{
    \hbox{
      \begin{minipage}{7cm}
        \begin{enumerate}[label=\roman*)]
        \item $R \le d_M(x,y) \le R+1$,
          \item $M\setminus B(y,r)$ is not connected, and
          \item $x\in V$ for some unbounded connected component $V \in \mathcal{U}(B(y,r))$ which is \emph{not} a neighborhood of $\zeta_M$
        \end{enumerate}
      \end{minipage}
    }
  }\right\}.$$
The parameter $R$ will be chosen below (and will typically be much larger than $r$).

Assume that $(M,x_0,y_0) \in E_{r,R}$. We claim that any other point $y \in M$ with $(M,x_0,y) \in E_{r,R}$ must satisfy $d_M(y_0,y) < 2r+1$. Indeed, write $M \setminus B(y_0,r) = A \amalg B$ as a disjoint union of two collections of connected components, where $B$ is the connected end neighborhood of $\zeta_M$, and $A$ is the union of all other connected components. Namely  $A = M \setminus (B(y_0,r) \cup B)$ and $x_0 \in A$. There are several separate cases to consider:
\begin{itemize}
    \item Assume that $y \in B(y_0,r)$. Then the condition is clear, for certainly $d_M(y_0,y) \le r \le 2r+1$.
        \item Assume that $y \in A$. Let $V$ be the connected component of $M \setminus B_M(y,r)$ containing the point $x_0$. The condition $(M,x_0,y) \in E_{r,R}$ implies that $V$ is \emph{not} an end neighborhood of $\zeta_M$ and so the ball $B(y_0,r)$ cannot be entirely contained in $V$. Hence the ball $B(y,r)$ must intersect non-trivially either $B(y_0,r)$ or the geodesic arc $l$ from $x_0$ to $y_0$. In case $B(y,r) \cap B(y_0,r) \neq \emptyset$ the desired condition follows by the triangle inequality. In case $B(y,r) \cap l \neq \emptyset$ let $z$ be an arbitrary point of this intersection. Then $$d_M(x_0,y) \le d_M(x_0,z) + r.$$ 
    Since $d_M(x_0,y) \ge R$  we get $d_M(x_0,z) \ge R-r$. Since $d_M(x_0,y_0) \le R+1$ and as  $z$ lies on the geodesic arc from $x_0$ to $y_0$ we get $d_M(z,y_0) \le r+1$.  Therefore
    $$d_M(y_0,y) \le d_M(y_0,z) + d_M(z,y) \le (r+1)+r = 2r + 1$$
    as required.

    \item Assume that $y \in B$. The two points $x_0$ and $y$ lie in different connected components of $M \setminus B_M(y_0,r)$. Hence a geodesic path from $x_0$ to $y$ must intersect $B(y_0,r)$ at some point $y_1$.  We get
    $$ d_M(x_0,y_1) + d_M(y_1,y) = d_M(x_0,y) \le   R+ 1.$$ 
    The triangle inequality gives $d_M(x_0,y_1) \ge R-r$. Putting these two inequalities together gives $d_M(y,y_1) \le r+1$. Hence $d_M(y,y_0) \le 2r+1$ as required.

\end{itemize}

We now invoke the mass transport principle (Lemma \ref{lemma:MTP}). 
Let $F_{r,R} : \Hypers^2 \to \mathbb{R}$ be the characteristic function of the set $E_{r,R}$. 
The above claim gives the upper bound
$$ L_\mu (F_{r,R}) = \int_{\Hypers^2} F_{r,R}(M,x,y) \; \mathrm{dvol}_M(y) \; \mathrm{d}\mu(M,x) \le \mathrm{vol}_{\HH^n} B(2r+1).$$
We have used the fact that the hyperbolic volume of a ball in any hyperbolic manifold is upper bounded by the hyperbolic volume of the ball of the same radius in the hyperbolic space. The right-hand side is a constant depending only on $r$. 
We will arrive at a contradiction  by choosing $R$  to be sufficiently large, as follows. 

By Corollary \ref{cor:thick part is infinitely ended} there is an  $\varepsilon  >0$ such that the event $E$ defined by requiring that the pointed manifold $(M,p)$ has  $\mathrm{InjRad}_M(p) \ge \varepsilon$ and the connected component of the $\varepsilon$-thick part at the base point $p$ is infinitely-ended satisfies $\mu(E)>0$. 
As $r\to \infty$, the $\mu$-probability of the event $D$ that the ball $B_M(p,r)$ disconnects the random pointed manifold $(M,p)$ into at least two connected components tends to $1$.
Choose $r$ sufficiently large so that  $\mu(E\cap D) > 0$.

For each $N \in \NN$ we may find a radius $R_1(N) > 0$ such that the event $E_N$ given by requiring that $(M,p) \in E\cap D$ and that the space $M_{\ge \varepsilon} \setminus B(p,R_1(N))$ has at least $N$  connected components not accumulating to the end $\zeta_M$ satisfies $\mu(E_N) > \frac{1}{2}\mu(E\cap D)>0$.
Set $R = 
R(N) = R_1(N) + 2\varepsilon$. For each pointed manifold $(M,p) \in E_N$, it is possible to choose points $x_1,\ldots,x_N$ lying in each of those $N$ connected components. Note that $d_M(x_i,x_j) \ge 2\varepsilon$ for every  pair of distinct indices $i$ and $j$. Further $\mathrm{InjRad}_M(x_i) \ge \varepsilon$ for each $i$, since those points all lie in $M_{\ge \varepsilon}$. We obtain that $(M,z,p) \in E_{r,R}$ for all points $z \in \bigcup_{i=1}^N B(x_i,\varepsilon)$. This gives  the lower bound
$$ R_\mu(F_{r,R}) = \int_{\Hypers^2} F_{r,R}(M,x,y) \; \mathrm{dvol}_M(x) \; \mathrm{d}\mu(M,y)  \ge \mu(E_N)\cdot N \cdot \mathrm{vol}_{\HH^n} B(\varepsilon).$$
Note that $\mu(E_N) \cdot \mathrm{vol}_{\HH^n} B(\varepsilon)$ is bounded away from zero, while the parameter $N$ can be chosen arbitrarily large. We arrive at a contradiction to the mass transport principle by taking
 $$N \ge \frac{\mathrm{vol}_{\HH^n} B(2r+1)}{
 \tfrac12\mu(E\cap D) \cdot \mathrm{vol}_{\HH^n} B(\varepsilon)}$$
and choosing a sufficiently large radius $R = R(N)$ accordingly.
\end{proof}

The proof of Theorem \ref{thm:random walk does not converge to a single end} in the two-ended case will be completed in \S\ref{subsec:exit map for 2} below.

\subsection{The Liouville property}
\label{sub:Liouville}

Let $M$ be a hyperbolic manifold. A smooth function $f : M \to \mathbb{R}$ is called \emph{harmonic} if $\Delta h=0$. Equivalently, a function $f : M \to \mathbb{R}$ is harmonic if $$f =P_tf  \quad \text{where} \quad P_tf(x) = \int_M p(t,x,y)f(y)  \; \mathrm{vol}_M(y) \quad \forall x \in M $$
for all times $t \ge 0$. Here $p(t,x,y)$ is the transition probability of the Brownian motion starting at the point $x$ and at time $t$. The manifold $M$ is called \emph{Liouville} if all bounded harmonic functions on $M$ are constant. Our strategy will be to use the following.

\begin{theorem}[{Lessa \cite{lessa2016brownian}}]
\label{theorem:Lessa}
Let $\mu$ be a unimodular random hyperbolic manifold. If $\mu$-almost every manifold is non-Liouville then $\mu$-almost every manifold has positive volume growth and drift, and is in particular transient.
\end{theorem}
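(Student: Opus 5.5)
This statement is a result of Lessa that the paper imports as a black box, so the proposal below only sketches how it is proved in \cite{lessa2016brownian}. The argument follows the Kaimanovich--Vershik / Benjamini--Curien entropy theory for stationary random graphs, transplanted to Brownian motion on random manifolds.

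\textbf{Step 1: stationarity and ergodicity.} A unimodular random hyperbolic manifold $\mu$ induces a \emph{stationary} random manifold when we move the root by Brownian motion run for unit time. In a hyperbolic manifold the volume measure is invariant for the heat semigroup, since the heat kernel $p(t,x,y)$ is symmetric. The mass transport principle (Lemma \ref{lemma:MTP}) then shows that $\mu$ is invariant under the Markov operator which replaces $(M,p)$ by $(M,B_1)$. We may assume $\mu$ is extremal, using the Choquet decomposition. The shift on Brownian paths over $\mu$ is then measure preserving and ergodic. This is the manifold analogue of the Benjamini--Curien construction, with heat kernel bounds on $M$ replacing bounded degree.

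\textbf{Step 2: entropy and drift.} Define the asymptotic entropy $h = \lim_t \frac1t \EE\left[-\log p(t,p,B_t)\right]$, which exists by subadditivity and stationarity. Define the drift $\ell = \lim_t \frac1t d_M(p,B_t)$, which exists almost surely by Kingman's subadditive ergodic theorem. Then prove the Kaimanovich--Vershik type dichotomy: $h=0$ if and only if almost every manifold is Liouville. The direction ``$h>0 \Rightarrow$ non-Liouville'' goes through the tail $\sigma$-algebra, which is trivial exactly when bounded harmonic functions are constant. Conversely, if $h=0$, the total variation between the laws of $B_t$ started at two nearby points tends to zero, forcing constancy. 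Under our hypothesis this gives $h>0$.

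\textbf{Step 3: from entropy to growth and drift.} Use a Shannon--McMillan--Breiman style argument to get $p(t,p,B_t) \approx e^{-ht}$. If $B_t$ stayed in $B(p,\ell' t)$ with $\ell'$ small, or if balls grew subexponentially, then Gaussian-type heat kernel upper bounds combined with the volume of the ball would bound the entropy by $o(t)$. This is the inequality $h \le \ell \cdot v$, with $v$ the exponential volume growth rate. It yields $\ell>0$ and $v>0$. Positive drift means $d(p,B_t)\to\infty$, so the motion is transient.

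\textbf{Main obstacle.} I expect the hardest part to be uniformity in the thin part. Injectivity radius is unbounded below, so the local heat kernel estimates are not uniform on $M$. The remedy is to use hyperbolicity: the curvature is exactly $-1$, so heat kernel upper bounds lift from $\HH^n$ via covering, giving $p_M(t,x,y)=\sum_\gamma p_{\HH^n}(t,\tilde x,\gamma\tilde y)$. One also needs integrability of $\log$ of the inverse heat kernel at unit time under $\mu$, which is needed to define $h$ finitely.
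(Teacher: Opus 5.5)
Your sketch is correct and follows the same route as the paper's outline. You reduce to an extremal (hence ergodic) $\mu$, use Lessa's result that the asymptotic entropy satisfies $h=0$ exactly in the Liouville case, and combine $h>0$ with the inequality $h \le \ell v$ to get $\ell>0$ and $v>0$, hence transience. The paper does not prove these ingredients; it cites Lessa's Theorems 2.11 and 2.15 for them, so your Steps 1--3 elaborate exactly what is invoked there.
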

\begin{proof}[Proof outline from \cite{lessa2016brownian}]
Assume without loss of generality that $\mu$ is extremal. Lessa shows in \cite[Theorem 2.15]{lessa2016brownian} that the entropy $h$, speed $l$ and volume growth $v$ are well-defined  for $\mu$-almost every manifold, and satisfy the inequality $h \le lv$. The entropy $h$ is equal to zero if and only if $\mu$-almost every manifold is Liouville \cite[Theorem 2.11]{lessa2016brownian}. The desired conclusion follows\footnote{Lessa's work \cite{lessa2016brownian} is even more general, in that it applies to stationary (rather than just unimodular) random Riemannian manifolds, not necessarily hyperbolic.}.
\end{proof}

The following observation relates the notion of ends and harmonic functions.

\begin{lemma}
\label{lem:ends and Liouville}
Let $M$ be a transient hyperbolic manifold.
Assume that there is a Borel subset $\mathcal{E} \subset \EndsInf{M} $ such that for some (equivalently, any) point $p\in M$ the exit map $\zeta_{M,p}$ takes values in both sets $\mathcal{E}$ and $\EndsInf{M}\setminus \mathcal{E}$ with positive probability on $T^1_p M$. Then the function $f : M \to \left[0,1\right]$ given by
$$h(p) = \textrm{probability that Brownian motion starting at $p$ tends to $\mathcal{E}$}$$
for all $p \in M$
is a non-constant bounded harmonic function on $M$. In particular, the manifold $M$ is non-Liouville.
\end{lemma}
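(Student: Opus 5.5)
We must show that $h$ is harmonic, bounded, and non-constant. Boundedness is immediate since $h$ takes values in $[0,1]$.

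\emph{Harmonicity.} The event $\{\text{the Brownian path tends to } \mathcal{E}\}$ is a tail (shift-invariant) event. By transience, almost every Brownian path eventually leaves every compact set. It therefore converges to an end, and by the remarks after Definition \ref{def:exit map} this end is almost surely an infinite-volume end. By the Markov property at time $t$,
$$h(x) = \int_M p(t,x,y)\, h(y) \; \mathrm{dvol}_M(y) = P_t h(x) \quad \forall t \ge 0,$$
so $h$ is harmonic in the semigroup sense recalled above. Smoothness then follows from standard elliptic regularity, since bounded $P_t$-invariant functions satisfy $\Delta h = 0$ weakly.

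\emph{Non-constancy.} We compare $h$ with the exit map. By the discussion at the start of \S\ref{sec:ends and entropy}, namely sublinear tracking \cite{blachere2011harmonic} and the equivalence of the hitting measure with Lebesgue measure \cite{quint2006overview}, the end reached by Brownian motion from $p$ has a law equivalent to $(\zeta_{M,p})_*$ of the uniform measure on $T^1_pM$. Lifting to $\HH^n$, both processes converge to the same boundary point in $\partial\HH^n$, whose projection determines the end. Hence, by hypothesis, $0 < h(p) < 1$.

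To see that $h$ is not constant, we use the martingale convergence theorem. The process $h(B_t)$ is a bounded martingale, and it converges almost surely to $1_{\{B_t \to \mathcal{E}\}}$; this is the usual Lévy 0-1 argument, using that the event is shift-invariant. If $h$ were a constant $c$, we would get $c \in \{0,1\}$ almost surely, which contradicts $0 < h(p) < 1$. So $h$ is non-constant and $M$ is non-Liouville.

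\emph{Main obstacle.} The delicate step is identifying the Brownian exit end with the geodesic exit end, so that the hypothesis on $\zeta_{M,p}$ transfers to Brownian motion. I expect to handle it through the boundary-map comparison in $\HH^n$ described above. Measurability of the event $\{B_t \to \mathcal{E}\}$ also needs checking. It follows because the map from transient paths to $\Ends{M}$ is measurable with respect to the topology generated by end neighborhoods, and $\mathcal{E}$ is Borel.
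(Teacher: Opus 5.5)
Your skeleton is the paper's: harmonicity from $h=P_th$ (Markov property plus shift-invariance of the event), boundedness, and non-constancy from L\'evy's zero-one law, which forces a constant $h$ into $\{0,1\}$. The paper leaves implicit the one substantive input, $0<h(p)<1$, i.e.\ that the hypothesis on the geodesic exit map $\zeta_{M,p}$ transfers to Brownian motion; it only points to the remarks opening \S\ref{sec:ends and entropy}. You rightly single this out, but your mechanism for it does not prove it. Let $\pi:\HH^n\to M$ be the covering map and $\widetilde B_t$ the lifted Brownian motion from $\widetilde p$. Let $\xi_\infty$ be its limit point and $\widetilde\sigma$ the geodesic ray from $\widetilde p$ to $\xi_\infty$. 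Sublinear tracking, $d_{\HH^n}(\widetilde B_t,\widetilde\sigma)=o(t)$, does not imply that $\pi(\widetilde B_t)$ and $\pi(\widetilde\sigma)$ tend to the same end. If a compact $K\subset M$ separated their ends, you would only learn that $\pi(\widetilde\sigma)$ passes within distance $o(t)$ of $K$ at arbitrarily late times. That is no contradiction, since a projected geodesic ray can escape to an end at sublinear speed. Similarly, \enquote{the projection of the boundary point determines the end} is clear for geodesic rays, because asymptotic rays project to paths at distance tending to $0$. For Brownian paths it is exactly what needs proof. The same goes for your remark that the Brownian exit end is a.s.\ of infinite volume, since the remark after Definition \ref{def:exit map} concerns geodesics.

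What closes the gap is that the tracking distance is recurrent: for some constant $C$, almost surely $d_{\HH^n}(\widetilde B_t,\widetilde\sigma)\le C$ at arbitrarily large times. To see this, use the upper half-space model with $\xi_\infty=x_\infty\in\RR^{n-1}$ and $\widetilde B_t=(x_t,y_t)$. The distance $d$ to the vertical geodesic over $x_\infty$, whose lower end is asymptotic to $\widetilde\sigma$, satisfies $\sinh d=|x_t-x_\infty|/y_t$. By the strong Markov property and the isometries $(x,y)\mapsto(a+\lambda x,\lambda y)$, this ratio has, given $\mathcal F_t$, a fixed almost surely finite law, so the event has positive probability. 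The event is invariant under time shifts and under all isometries of $\HH^n$. Hence $x\mapsto\mathbb{P}_x(\text{event})$ is an isometry-invariant harmonic function, so constant, and L\'evy's zero-one law forces the constant to be $1$. Now suppose $K$ separated the two ends. Then $\pi(\widetilde\sigma)$ eventually stays outside the compact set $\overline{N_{C+1}(K)}$. So at the good times a path of length less than $C+1$ joins $\pi(\widetilde B_t)$ to $\pi(\widetilde\sigma)$ while missing $K$, which is absurd. The law of $\xi_\infty$ is exactly the visual measure from $\widetilde p$, by rotational symmetry. Hence the Brownian exit end from $p$ has law equal to the push-forward under $\zeta_{M,p}$ of the uniform measure on $T^1_pM$. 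This gives $0<h(p)<1$, and the rest of your argument goes through. For the non-Liouville conclusion alone you could bypass all this. Take the Poisson integral on $\HH^n$ of the $\pi_1(M)$-invariant set of $\xi\in\partial\HH^n$ whose geodesic rays project to paths tending into $\mathcal E$. It descends to a non-constant bounded harmonic function on $M$.
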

\begin{proof}
The fact that the function $h$ is harmonic follows by observing that $h = P_th$ for all times $t \ge 0 $. The function $h$ is certainly bounded in the range $\left[0,1\right]$. Let $p \in M$ be any base point. L\'evy's zero-one law states that the trajectory of the Brownian motion $X_t$ starting at the point $p$ almost surely satisfies $\lim_{t\to\infty}h(X_t) \in \{0,1\}$  \cite[Theorems 3.19 and 3.21]{LeGall2016}. It follows that the harmonic function $h$ is not constant.
\end{proof}

We deduce the following result, which was stated in the introduction and says that an infinitely-ended unimodular random hyperbolic manifold has positive drift.

\begin{proof}[Proof of Theorem \ref{thm:main infinitely ended}]
Let $\mu$ be a unimodular random hyperbolic manifold such that  $\mu$-almost almost every manifold has infinitely many infinite-volume ends. We know that $\mu$-almost every manifold is transient by Theorem \ref{thm:infinitely ended unimodular random surfaces are transient}. Further, we have established in Theorem \ref{thm:random walk does not converge to a single end} for $\mu$-almost every pointed manifold $(M,p)$ that the exit map $\zeta_{M,p}$ is not essentially constant. 
Namely $M$ admits a decomposition of its space of infinite-volume ends $\EndsInf{M} = \mathcal{E}_1(M) \amalg \mathcal{E}_2(M)$ as a disjoint union of two Borel subsets, such that Brownian motion  on the manifold $M$ starting at $p$ has a positive probability of accumulating to either subset $\mathcal{E}_1(M)$ or $\mathcal{E}_2(M)$. 
Hence $\mu$-almost every manifold $M$ admits a non-constant bounded harmonic function and is non-Liouville by Lemma \ref{lem:ends and Liouville}. We conclude that $\mu$-almost every manifold $M$ has positive drift by Theorem \ref{theorem:Lessa}.
\end{proof}

\begin{remark}
The reader is referred to e.g. \cite{benjamini2012ergodic,curien2017random} for the analogous theory of harmonic functions and the Liouville property for unimodular random graphs (which is not being used here directly).
\end{remark}

\subsection{The exit map for two ends}
\label{subsec:exit map for 2}

We pick up the remaining case of unimodular random hyperbolic manifolds with two infinite-volume ends using a completely different idea. It relies on the geometric behavior of the geodesic flow (see \S\ref{sub:geodesic} for details). We distinguish the following two types of pointed transient manifolds with a choice of a unit tangent vector $(M,p,\vec v) \in \HypersV$:
\begin{itemize}
    \item $(M,p,\vec v)$ is called \emph{unipolar} if the bi-infinite geodesic flow trajectory $\gamma_t(S,p,\vec v)$  tends to the same end as $t \to \pm \infty$, and
    \item $(M,p,\vec v)$ is called \emph{bipolar} if the bi-infinite geodesic flow trajectory $\gamma_t(S,p,\vec v)$ tends to different ends as $t \to \pm \infty$.
\end{itemize}

\begin{theorem}
\label{thm:two ended is bipolar}
Let $\mu$ be a unimodular random hyperbolic manifold. 
If $\mu$-almost every pointed hyperbolic manifold $(M,p) \in \Hypers$ is transient and satisfies $|\EndsInf{M}| = 2$ then $\vec \mu$-almost every $(M,p,\vec v) \in \HypersV$ is bipolar.
\end{theorem}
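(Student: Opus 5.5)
The plan is to argue by contradiction, using the \enquote{mass transport principle along the geodesic flow} (Lemma \ref{lemma:MTP along geodesics}). The set $\mathcal{U} \subset \HypersV$ of unipolar triples is invariant under the geodesic flow, since being unipolar is a property of the whole trajectory. Suppose $\vec\mu(\mathcal{U}) > 0$. The aim is to select, measurably and in a flow-equivariant way, one distinguished point on every unipolar trajectory. One unit of mass transported from every point of the trajectory to that point then gives an unbalanced transport: each point sends out mass $1$, while the distinguished point receives infinite mass. Since $L_{\vec\mu} = R_{\vec\mu}$, this contradicts $\vec\mu(\mathcal{U})>0$. Concretely, let $s(M,p,\vec v) \in \RR$ be the time parameter of the selected point along $\gamma_t(M,p,\vec v)$. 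Set
$$F\bigl(M,(p,\vec v),(q,\vec w)\bigr) = 1 \quad \text{if } (M,p,\vec v)\in\mathcal{U} \text{ and } (q,\vec w) = \gamma_{s(M,p,\vec v)}(p,\vec v),$$
and $F=0$ otherwise. Then $L_{\vec\mu}(F)$ is at most $\vec\mu(\mathcal{U})$ times the Lebesgue measure of a point, whereas $R_{\vec\mu}(F) = \infty \cdot \vec\mu(\mathcal{U}\cap\{s=0\})$. Averaging over the flow forces $\vec\mu(\mathcal{U})=0$. (It is cleaner to transport from the whole line into the time-$0$ point: $L_{\vec\mu}(F)=\vec\mu(\mathcal U)<\infty$ if we count with $\mathrm{Leb}$ over sources, while the receiving side is infinite.)

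To choose the distinguished point, I avoid any global labeling of the two ends, which need not be measurable. A unipolar trajectory itself determines an end $\zeta$, the common limit as $t\to\pm\infty$, and hence the \emph{other} end $\zeta'$, since $|\EndsInf{M}|=2$. On such an $M$ define
$$g_{\zeta'}(x) = \text{normalized measure of } \{\vec u\in T^1_xM : \zeta_{M,x}(\vec u) = \zeta'\}.$$
Equivalently, up to measure class, this is the harmonic function of Lemma \ref{lem:ends and Liouville} associated to $\mathcal{E}=\{\zeta'\}$. It is continuous in $x$ and depends only on $(M,\zeta')$. Along a unipolar trajectory one then looks at the function $t\mapsto g_{\zeta'}(\gamma_t)$. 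If this tends to $0$ as $t\to\pm\infty$, it attains its maximum on a nonempty compact set of times. We let $s$ be the smallest such time. This choice is equivariant under the flow, because shifting the base point shifts $s$ accordingly, and it is measurable by standard arguments.

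The main obstacle is exactly the claim that $g_{\zeta'}(\gamma_t)\to 0$ as $t\to\pm\infty$ along a geodesic converging to $\zeta$. This can fail pointwise if the end neighborhoods of $\zeta$ contain thin corridors. My first attempt would be to apply L\'evy's zero-one law to the bounded harmonic function $g_{\zeta'}$, combined with the fact from the opening of this section that Brownian motion sublinearly tracks geodesics. Together these should give the limit along $\vec\mu$-almost every geodesic, which suffices for the mass transport. If that fails, I would replace the maximum by a more robust quantity. One option is the first time at which $\gamma_t$ enters a bounded region separating $\zeta$ from $\zeta'$. Another is to observe that $g_{\zeta'}(\gamma_t)$ cannot stay bounded away from $0$ on a set of infinite Lebesgue measure along almost every trajectory; this in turn follows from Poincar\'e recurrence for the flow-invariant probability $\vec\mu$ (Lemma \ref{lemma:vec mu is geodesic flow invariant}) together with the definition of the exit map. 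Either way, once a finite, compact selection exists on each unipolar trajectory, the mass transport computation above concludes.
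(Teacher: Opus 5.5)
Your overall architecture is the paper's: restrict to the flow-invariant set $\mathcal U$ of unipolar vectors, select one point on each unipolar line in a flow-equivariant way, and contradict Lemma \ref{lemma:MTP along geodesics}. The gap is the selection rule. Since $g_{\zeta'}$ is a nonnegative harmonic function, it is either positive everywhere or identically zero. It vanishes identically exactly when the exit map of $M$ is essentially constant with value $\zeta$.

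Nothing in the hypotheses rules this case out, and it is precisely the case this theorem is needed for. The two-ended case of Theorem \ref{thm:random walk does not converge to a single end} assumes $\zeta_{M,p}$ is essentially constant, observes that then every trajectory is unipolar, and appeals to the present theorem. When $g_{\zeta'}\equiv 0$, the function $t\mapsto g_{\zeta'}(\gamma_t)$ is maximized at every time, so there is no smallest maximizer and no selection.

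So your argument only excludes manifolds whose exit map is not essentially constant. Even there, the decay $g_{\zeta'}(\gamma_t)\to 0$ should come from Fatou's radial-limit theorem for bounded harmonic functions on $\HH^n$, applied to the lift. L\'evy's zero-one law plus sublinear tracking does not by itself transfer limits from Brownian paths to geodesics.

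Your fallbacks do not close the gap. There is no canonical compact set separating $\zeta$ from $\zeta'$. Suppose you build one from the line itself, say from the points $q$ on it where $\rho_M(q)$ is within $1$ of its infimum along the line; here $\rho_M(q)$ is the least $r$ such that $M\setminus B(q,r)$ has two infinite-volume components. The corresponding set of times is typically unbounded in both directions. In a $\ZZ$-periodic manifold, for instance, such separating balls occur all along the end $\zeta$, so there is no first one.

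The idea you are missing is the paper's domination relation on this set. Declare $x\succ y$ if the two separating balls are disjoint and the $\zeta$-side component of the complement of the ball at $x$ contains that of $y$. Unipolarity gives that each $x$ dominates all such points sufficiently far out in \emph{both} directions. Having exactly two infinite-volume ends makes $\succ$ asymmetric. Together these force the set of non-dominated points to be nonempty and bounded, and its last point is a flow-equivariant, purely topological selection. It works whether or not the exit map is constant.

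A smaller repair concerns the transport function. With $F$ supported on the exact selected vector, both $L_{\vec\mu}(F)$ and $R_{\vec\mu}(F)$ vanish: a single time is Lebesgue-null, and $\vec\mu(\{s=0\})=0$. Use a window $|t-s|\le 1$ as the paper does. This gives $L_{\vec\mu}(F)=2\vec\mu(\mathcal U)$ against $R_{\vec\mu}(F)=\infty$.
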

\begin{proof}
Consider the geodesic flow invariant probability 
$\vec \mu$ measure on the space $\HypersV$; see Lemma \ref{lemma:vec mu is geodesic flow invariant}.
The property of being unipolar or bipolar is invariant under the action of geodesic flow. Hence we may assume towards contradiction that the geodesic flow action is ergodic on $\vec \mu$ and that $\vec \mu$-almost every manifold with a base point and a unit tangent vector $(M,p,\vec v) \in \HypersV$ is transient, satisfies $|\EndsInf{M}|=2$ and is unipolar. 
This assumption implies by \Cref{rmk:exit map from basepoint} that $\vec \mu$-almost surely there exists an end $\zeta_M \in \EndsInf{M}$ such that $\lim_{t\to\pm\infty}\gamma_t(M,p,\vec v) = \zeta_M$.

Given a hyperbolic manifold $M$ with $|\EndsInf{M}|=2$ and a point $p\in M$ denote 
$$ \rho_M(p) = \inf \{ r > 0 \: : \:  \text{ $M \setminus B(p,r)$ has  two infinite-volume components} \}.$$ 
The quantity $\rho$ is measurable and $\vec \mu$-almost surely finite, since $\vec \mu$-almost every manifold has two infinite-volume ends. Now, consider a tangent unit vector $\vec v$ at the point $p$, and assume that $\gamma_t(M,p,\vec v) \to \zeta_M$ as $t \to \pm \infty$.
Consider the bi-infinite locally geodesic path  $\lambda_M(p,\vec v)$ on the manifold $M$ passing through $p$ in the direction $\vec v$. Define 
$$\tau_M(p,\vec v) = \inf_{q \in \lambda_M(p,\vec v)}  \rho_M(q) + 1$$
(where the $+1$ is taken as the infimum need not be achieved).
Define
$$X_M(p,\vec v) = \{ q \in \lambda_M(p,\vec v) \: : \: \rho_M(q) \le \tau_M(p,\vec v)\}.$$
By definition $X_M(p,\vec v) \neq \emptyset$ holds true $\vec \mu$-almost surely. 

Given each point  $x\in X_M(p,\vec v)$  consider the decomposition
$$X - B_M(x,\tau_M(p,\vec v)) = M_x^+ \amalg M_x^-$$ where $M_x^+$ and $M_x^-$ are two infinite-volume  connected components, such that $M_x^+$ is an end neighborhood of the end $\zeta_M$.
For a pair of points $x,y \in X_M(p,\vec v)$ we say that \emph{$x$ dominates $y$} and write $x \succ y$ if the two balls $B(x,\tau_M(p,\vec v))$ and $B(y,\tau_M(p,\vec v))$  are disjoint and $M_x^+ \supset M_y^+  $. The relation $\succ$ on the set $X_M(p,\vec v)$ has the following properties:
\begin{enumerate}[label=(\alph*)]
    \item $\succ$ is anti-reflexive, asymmetric and transitive. The asymmetry relies on the fact that $M$ has only two infinite-volume ends.
    \item For every $x \in X_M(p,\vec v)$ we have that $x \succ y$ for all $y=\gamma_t(M,p,\vec v) \in X_M(p,\vec v)$ provided $|t|$ is sufficiently large. 
    \item $x \succ y $ implies $d_M(x,y) \ge \tau_M(p,\vec v)$.
\end{enumerate}

We say that a point $y \in X_M(p,\vec v)$ is \emph{dominated} if $x \succ y$ for some $x \in X_M(p,\vec v)$. Denote $$Y_M(p,\vec v) = \{x \in X_M(p,\vec v) \: : \: \text{$x$ is \emph{not} dominated} \}.$$ 
Property (b) shows that the subset $Y_M(p,\vec v)$ is bounded (or what is equivalent, the set of  times that the geodesic $\lambda_M(p,\vec v)$ spends in $Y_M(p,\vec v)$ is bounded). We claim that $Y_M(p,\vec v) \neq \emptyset$.  Otherwise, every point $x \in X_M(p,\vec v)$ would be dominated, and we would be able to extract a sequence $x_n \in X_M(p,\vec v)$ with $x_{n+1} \succ x_n$. In particular $x_n \succ x_1$ for all $n$ by transitivity. On the other hand, properties (b) and (c) imply that $x_1 \succ x_n$ for all $n$ sufficiently large. This is a contradiction to property (a), namely the asymmetry of the relation $\succ$.

Finally, since $Y_M(p,\vec v)$ was defined in a geodesic flow equivariant way, it  depends only on the oriented locally geodesic path $\lambda_M(p,\vec v)$ and not on the base point $p$. In other words, the object $Y_M(p,\vec v)$ is invariant under the geodesic flow (it may depend on the choice of the future and past directions).

Define $\omega_M(p,\vec v) = \gamma_{T_M(p,\vec v)}(M,p,\vec v)$ where $T_M(p,\vec v)=\sup \{  t\; : \gamma_t(M,p,\vec v)\in   Y_M(p,\vec v)\}$, i.e. $\omega_M(p,\vec v)$ is the point in $Y_M(p,\vec v)$ with the supremum visitation time $t$ along the geodesic line $\lambda_M(p,\vec v)$. By the above reasoning, the assignment $(M,p,\vec v) \to \omega_M(p,\vec v)$ is invariant under geodesic flow. This sets up the objects required to arrive at a contradiction to the invariance of the measure $\vec \mu$ under the geodesic flow. Let us provide the details.

Consider the space $\HypersV^2$ introduced in \S\ref{sub:geodesic} above, and the  Borel function $F : \HypersV^{(2)} \to \left[0,1\right]$  given by
\begin{equation}
F(M,(p,\vec{v}),(q,\vec{u})) = \begin{cases}
1 & \text{$\gamma_t(M,q,\vec u) = \omega_M(p,\vec{v})$ for some $|t|\le 1$} \\
0 & \text{otherwise}.
\end{cases}
\end{equation}
Recall the left and right measures $L_{\vec \mu}$ and $R_{\vec \mu}$ introduced in Lemma \ref{lemma:MTP along geodesics}. Certainly $L_{\vec \mu}(F) = 2$. Conversely, by a suitable adaptation of the \enquote{everything shows up at the root} principle (i.e. Lemma \ref{lemma: everything shows up at the root}), we know that with positive $\vec \mu$-measure $(M,p,\vec v)$ satisfies  $\gamma_t (M,p,\vec v) = \omega_M(p,\vec v)$ for some $|t| \le 1$. 
Hence   $R_{\vec \mu}(F) = \infty$. This is a contradiction to the invariance of $\vec \mu$ under the geodesic flow, and more specifically, to Lemma \ref{lemma:MTP along geodesics}.
\end{proof}

We may now complete the proof Theorem \ref{thm:random walk does not converge to a single end} in the remaining two-ended case.  

\begin{proof}[Proof of Theorem \ref{thm:random walk does not converge to a single end} in the two-ended case]
Let $\mu$ be a unimodular hyperbolic manifold. Assume towards contradiction that $\mu$-almost every pointed manifold $(M,p)$ is transient, satisfies $|\EndsInf{M}|=2$ and the exit map $\zeta_{M,p}$ is essentially constant on $T^1_p M$. Let $\vec \mu$ be the lift of $\mu$ to a probability measure on $\HypersV$ invariant under the action of the geodesic flow (see the discussion in \S\ref{sub:geodesic}). Note that $\vec \mu$-almost every $(M,p,\vec v)$ is unipolar. We arrive at a contradiction to Theorem \ref{thm:two ended is bipolar}.
\end{proof}

We are ready to present the following.

\begin{proof}[Proof of Theorem \ref{thm:main two ended}]
Let $\mu$ be a unimodular random hyperbolic manifold, such that $\mu$-almost every manifold $M$ has exactly two infinite-volume ends $\zeta_M$ and $\zeta'_M$. 
Our goal is to show that $\mu$-almost every manifold is recurrent. Assume towards contradiction that $\mu$-almost every manifold is transient. 
We will be relying on the geodesic flow and on the exit map; see the discussions in \S\ref{sub:geodesic} and \S\ref{sub:exit map}.

We know by Theorem \ref{thm:random walk does not converge to a single end} that for $\mu$-almost every pointed hyperbolic manifold $(M,p)$ the exit map $\zeta_{M,p}$ is not essentially constant on $T^1_pM$.  This allows us to apply Lemma \ref{lem:ends and Liouville} and define a non-constant bounded harmonic function $h_M$ on the manifold $M$ by letting $h_M(q)$  be the probability that the Brownian motion starting at the point $q \in M$ tends to a specific infinite-volume end $\zeta_M \in \EndsInf{M}$. 

Given a pointed manifold $(M,p)\in \Hypers$ denote by $\theta_p$ the uniform probability measure on the unit tangent sphere $T^1_pM$. 
By the construction of the non-constant harmonic function $h_M$ and by the \enquote{everything shows up at the root} principle (Lemma \ref{lemma: everything shows up at the root}), we see that for a positive $\mu$-measure of pointed manifolds $(M,p)$ the push-forward of the measure $\theta_p$ via the exit map $\zeta_{M,p}$ is different than $\frac{1}{2}\delta_{\zeta_M}+\frac{1}{2} \delta_{\zeta'_M}$. In other words, the probability that the Brownian motion starting at $p$ accumulates to either end $\zeta_M$ or $\zeta'_M$ is not the same.

Lift $\mu$ to a geodesic flow invariant probability measure $\vec \mu$ on the space $\HypersV$. 
There is a symmetric involution $I_p$ defined on each unit tangent sphere $T^1_pM$ and given by $I_p(v)=-v$ for all $v \in T^1_pM$. The involution $I_p$ preserves the measure $\theta_p$. 
From the discussion in the previous paragraph regarding the push-forward of $\theta_p$ via the exit map, and by considering the action of the involution $I_p$ on the unit tangent sphere $T^1_pM$, we deduce that $(M,p,\vec v)$ is unipolar with a positive $\vec \mu$-probability. This is a contradiction to Theorem \ref{thm:two ended is bipolar}.
\end{proof}

\bibliographystyle{alpha}
\bibliography{unimodular}

\end{document}